\newtheorem{theo}{Theorem}
\newtheorem{prop}{Proposition}
\newtheorem{lema}{Lemma}
\newtheorem{aff}{Claim}
\newtheorem{obs}{Remark}
\newcommand{\R}{\mathbb{R}}
\newcommand{\ud}{\mathrm{d}}
\begin{document}



\title{MULTIPLE NODAL SOLUTIONS OF PLANAR STEIN-WEISS EQUATIONS}

\date{}

\author{
		EUDES M. BARBOZA$^1$ \thanks{Corresponding author}, EDUARDO DE S. BÖER$^{2}$  ,  OLÍMPIO H. MIYAGAKI$^{3}$ \\ \,\, and \,\, CLAUDIA R. SANTANA$^{4}$}
\noindent 

\maketitle

\noindent \textbf{Abstract:} In this paper, our goal is to investigate the existence of multiple nodal solutions to a class of planar Stein-Weiss problems involving a nonlinearity $f$ with subcritical or critical growth in the sense of Trudinger-Moser. To achieve this, we combine a gluing approach with the Nehari manifold argument. We demonstrate that for any positive integer $k\in \mathbb{N}$, the problem studied has at least one radially symmetrical ground state solution that changes sign exactly $k$-times.

\noindent {\bf Keywords:} {\it Stein-Weiss equation;  Weighted Hardy-Littlewood-Sobolev inequality, Critical Exponetial growth;  Nodal Solutions; Variational Problems; Critical Points.}

\section{Introduction}

In this work, we are concerned with the  following class of planar  equations with Stein-Weiss convolution parts
\begin{equation}\label{problem}
	\left\{\begin{array}{rclcl}\displaystyle -\Delta u+V(|x|) u &=& \dfrac{1}{|x|^{\beta}}\left(\displaystyle\int_{\mathbb{R}^2}\dfrac{F(u(y))}{|y-x|^{\mu}|y|^{\beta}}\,\mathrm{d}y\right)f(u(x))\quad \mbox{in} \quad \mathbb{R}^2;\\
		u \in H^1(\mathbb{R}^2),& & 
	\end{array}\right.
\end{equation} 
with  $\mu>0,\beta\geq0$ such that $\mu+2\beta< 2$, $F(t)= \int_0^tf(s)ds$.  For each $k\in \mathbb{N}$, we seek  a radial symmetric solution for \eqref{problem} with $k$ nodes.

In Problem \eqref{problem}, the nonlinearity is nonlocal and it is motivated by the weighted Hardy-Littlewood-Sobolev inequality that is also known as Stein-Weiss type inequality, see \cite{S-W}. Let us recall this inequality, which is frequently used throughout this paper. 
\begin{lema}\label{WHLS}$($Weighted Hardy-Littlewood-Sobolev inequality$)$ 
	Let $1 < r,s < +\infty$, $0 < \mu < N$, $\gamma + \beta \geq  0$, $0 < \gamma +\beta +\mu \leq N$,
	$g \in L^r(\mathbb{R}^N)$ and $h\in  L^s(\mathbb{R}^N)$. Then, there exists a sharp constant $C(r,s, N,\alpha,\beta,\mu)$ such that
	\begin{equation}\label{hls}
		\int_{\mathbb{R}^N}\!\int_{\mathbb{R}^N}\frac{g(x)h(y)}{|x|^{\gamma}|y-x|^{\mu}|y|^{\beta}}\,\mathrm{d}x\mathrm{d}y\leq C(r,s,N,\gamma,\beta,\mu) \|f\|_r\|h\|_s,
	\end{equation}
	where
	\begin{equation*}
		\frac{1}{{r}}+\frac{1}{s}+\frac{\gamma+\beta+\mu}{N}=2
	\end{equation*}
	and
	\begin{equation*}
		1-\frac{1}{{r}}-\frac{\mu}{N} <\frac{\gamma}{N}<1-\frac{1}{{r}}.
	\end{equation*}
	In addition, for all $h \in L^s(\mathbb{R}^N)$, we have
	\begin{equation}\label{hls1}
		\Bigg\|	\int_{\mathbb{R}^N}\frac{h(y)}{|x|^{\gamma}|y-x|^{\mu}|y|^{\beta}}\,\mathrm{d}y\Bigg\|_t\leq C(t ,s,N,\gamma,\beta,\mu) \|h\|_s,
	\end{equation}
	where $t$ verifies
	\begin{equation}\label{hls1.1}
		1+\frac{1}{t}=\frac{1}{s}+\frac{\gamma+\beta+\mu}{N} \quad \mbox{and} \quad \frac{\gamma}{N}<\frac{1}{t} <\frac{\gamma\mu}{N}.
	\end{equation}
\end{lema}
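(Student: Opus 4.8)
The plan is to obtain the inequality by combining a duality reduction with the classical theory of weighted fractional integration; this is essentially the argument of Stein and Weiss, so I would present it as a sketch of the standard proof rather than claim novelty. Since the statement only asserts that a finite (sharp) constant exists, I would concentrate on the boundedness of the bilinear form and treat the optimization defining the best constant separately.

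First I would reduce the bilinear estimate \eqref{hls} to the linear mapping property \eqref{hls1}. Writing the bilinear form as $B(g,h)=\int_{\mathbb{R}^N} g(x)\,(Th)(x)\,\mathrm{d}x$ with $(Th)(x)=|x|^{-\gamma}\int_{\mathbb{R}^N}|x-y|^{-\mu}|y|^{-\beta}h(y)\,\mathrm{d}y$, Hölder's inequality gives $B(g,h)\le \|g\|_r\,\|Th\|_{r'}$, so the bilinear bound follows once $\|Th\|_{r'}\le C\|h\|_s$ is established. The exponent arithmetic is the consistency check that makes this legitimate: the balance $1/r+1/s+(\gamma+\beta+\mu)/N=2$ forces $1/t=1-1/r=1/r'$, i.e. $t=r'$, and the two-sided constraint $1-1/r-\mu/N<\gamma/N<1-1/r$ translates exactly into the admissibility range $\gamma/N<1/t<(\gamma+\mu)/N$ of \eqref{hls1.1}.

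Second I would prove the linear bound. The kernel $K(x,y)=|x|^{-\gamma}|x-y|^{-\mu}|y|^{-\beta}$ is homogeneous of degree $-(\gamma+\mu+\beta)$, which is precisely the degree dictated by testing the desired $L^s\to L^t$ estimate against the dilations $h(\lambda\,\cdot)$; this scale invariance is what makes a clean bound possible. To extract it I would split $\mathbb{R}^N\times\mathbb{R}^N$ into the two off-diagonal regions $\{|x|\ge 2|y|\}$ and $\{|y|\ge 2|x|\}$, where $|x-y|$ is comparable to the larger of $|x|$ and $|y|$ so that $K$ collapses to a product of two pure powers controlled by a weighted Hardy/Young estimate, together with the diagonal region $\{|x|/2\le|y|\le 2|x|\}$, where $|x|\approx|y|$ reduces $K$ on each dyadic annulus $|x|\approx 2^j$ to a constant multiple of the Riesz kernel $|x-y|^{-\mu}$. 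On the diagonal region the unweighted Hardy--Littlewood--Sobolev inequality (equivalently the $L^p$-boundedness of the Riesz potential) applies scale by scale, and the exponent relation lets the dyadic sum in $j$ converge. Summing the three contributions yields \eqref{hls1} with a finite constant.

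The main obstacle is handling the two genuinely different singularities at once: the diagonal singularity $|x-y|^{-\mu}$ and the origin singularities coming from the weights $|x|^{-\gamma}$ and $|y|^{-\beta}$. The role of the strict inequalities on $\gamma$ (equivalently the range for $1/t$) is exactly to guarantee local integrability of $K$ near the origin and near the diagonal while preserving enough decay at infinity, so the delicate point is checking that these strict inequalities make each of the three regional integrals converge and assemble into a scale-invariant estimate. Finally, the existence of a \emph{sharp} constant is immediate as the infimum over all admissible constants; identifying or attaining it via rearrangement and the scaling symmetries of $K$ is the genuinely hard part, but it is not needed for the applications here, so I would only remark on it.
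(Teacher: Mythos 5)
The paper offers no proof of this lemma at all: it is quoted as classical background and attributed directly to Stein and Weiss \cite{S-W}, so there is no internal argument to compare yours against. Your sketch is the standard modern proof of that cited result, and its skeleton is sound: the duality step correctly shows that \eqref{hls} with exponent balance $1/r+1/s+(\gamma+\beta+\mu)/N=2$ reduces via H\"older to the mapping property \eqref{hls1} with $t=r'$, and your translation of the constraint $1-1/r-\mu/N<\gamma/N<1-1/r$ into $\gamma/N<1/t<(\gamma+\mu)/N$ is exactly right --- indeed it exposes that the paper's condition ``$\frac{\gamma}{N}<\frac{1}{t}<\frac{\gamma\mu}{N}$'' in \eqref{hls1.1} contains a typo and should read $\frac{\gamma+\mu}{N}$. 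The three-region decomposition (two off-diagonal cones where $|x-y|$ is comparable to $\max\{|x|,|y|\}$, plus the diagonal $|x|\approx|y|$) is the correct mechanism for separating the origin singularities of the weights from the diagonal singularity of the Riesz kernel.

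One step is glossed over in a way worth flagging. On the diagonal region you say the unweighted Hardy--Littlewood--Sobolev inequality ``applies scale by scale,'' but with the given exponents it does not apply directly: on an annulus $|x|\approx|y|\approx 2^j$ the pair $(r,s)$ satisfies $1/r+1/s+\mu/N=2-(\gamma+\beta)/N<2$ whenever $\gamma+\beta>0$, which is the wrong balance for unweighted HLS. The fix is to first lower the exponents by H\"older on the bounded annulus; this costs a factor $|A_j|^{(\gamma+\beta)/N}\approx 2^{j(\gamma+\beta)}$ that exactly cancels the kernel prefactor $2^{-j(\gamma+\beta)}$, after which the sum over $j$ of $\|g\|_{L^r(A_j)}\|h\|_{L^s(A_j)}$ is controlled by discrete H\"older together with the embedding $\ell^s\subset\ell^{r'}$, which is legitimate precisely because $1/r+1/s=2-(\gamma+\beta+\mu)/N\geq 1$ under the hypothesis $\gamma+\beta+\mu\leq N$. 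With that bookkeeping supplied, your outline assembles into a complete proof of the boundedness statement, and your closing remark is accurate: existence of a finite best constant is immediate from boundedness, and only the identification or attainment of the sharp constant (not needed anywhere in this paper) requires the deeper rearrangement arguments.
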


In 1928, Hardy and Littlewood first introduced the one-dimensional case of \eqref{hls} with $\gamma = \beta = 0$, in \cite{Hardylee}. Sobolev later extended this inequality to the $n$-dimensional version in \cite{Sobolev}, which is now known as the Sobolev-Hardy-Littlewood inequality. Lieb obtained the sharp form of this inequality, along with the existence of a pair of optimal functions, in \cite{Lie83}, using a symmetric decreasing rearrangement technique. Lions provided an alternative proof based on the concentration-compactness principle, in \cite{Lio84}. In 1958, Stein and Weiss generalized the Sobolev-Hardy-Littlewood inequality by introducing two weights, resulting in \eqref{hls}.

The Stein-Weiss inequality plays a crucial role in providing quantitative insights into the integrability of integral operators, closely tied to their dilation characteristics. This inequality has garnered significant interest among scholars due to its fundamental importance in applications related to harmonic analysis and partial differential equations. In recent years, many studies are concened with problems which envolve this kind of convolution term in some different aspects, see for instance, \cite{WS6,SW2,WS3,WS5,WS7,WS1,WS2,WS4} and  references therein.

We reacall that if $\gamma = \beta=0$ \eqref{problem} becomes a Choquard-type equation and  \eqref{hls} reduces to the classical Har{d}y-Littlewood-Sobolev inequality, see \cite{lieb}, which allows to study this kind of equation variationally.
The motivation for the class of Choquard equations  arises from the quest for standing wave solutions in a specific category of time-dependent Schrödinger equations given by

\begin{equation}\label{1}
	i\frac{\partial\Psi}{\partial t}=-\Delta\Psi+W(x)\Psi-\left(I_{\alpha}\ast \vert \Psi \vert^{p} \right)\vert \Psi \vert^{p-2}\Psi, \quad (t,x)\in \mathbb{R}_{+}\times\mathbb{R}^{N},
\end{equation}
where $i$ is the imaginary unit, $p\geq 2$, and $W(x)$ represents an external potential. A standing wave solution of \eqref{1} takes the form $\Psi(x,t)=u(x)e^{-i\omega t}$, where $\omega\in\mathbb{R}$ and $u$ satisfies the stationary equation

\begin{equation}\label{4}
	-\Delta u +V(x)u=\left( I_{\alpha}\ast \vert u\vert^{p}\right)\vert u\vert^{p-2}u, \quad \text{in } \mathbb{R}^N,
\end{equation}
with $V(x)=W(x)-\omega$. In certain instances, \eqref{4} is also recognized as the Schrödinger-Newton equation, initially proposed by R. Penrose, in \cite{penrose} to explore the self-gravitational collapse of a quantum mechanical wave function. When $p>1$, $\alpha=1$, and $N=3$, equation \eqref{problem} takes the form of the nonlinear Choquard equation

\begin{equation}\label{choqu}
	-\Delta u+u=\left(I_{2}\ast |u|^{p}\right)|u|^{p-2}u, \quad x\in\mathbb{R}^{3}.
\end{equation}
This specific case dates back to 1954, as described by S. Pekar, in \cite{pekar}, in which he introduced a polaron at rest within quantum theory. In 1976, P. Choquard used equation \eqref{choqu} to model an electron trapped in its own hole, considering it as an approximation to the Hartree-Fock theory of a one-component plasma, as discussed in \cite{choquard}. For additional insights into the physical background, readers are encouraged to explore \cite{choq1,choq2}.

Returning to case that $\beta,\gamma\geq0$ in \eqref{hls}, notice that if
$g=h=F$, $\gamma =\beta$, $s = r$, then in view of Lemma \ref{WHLS}, we obtain
\begin{equation}\label{Ls1}
	\int_{\mathbb{R}^N} \! \int_{\mathbb{R}^N}\frac{F(u(x))F(u(y))}{|x|^{\beta}|y-x|^{\mu}|y|^{\beta}}\,\mathrm{d}x\mathrm{d}y \leq C(N,\beta,\mu)\|F(u)\|^2_{s},
\end{equation}
where $s>1$ is defined by 
\begin{equation*}
	\frac{2}{s}+\frac{2\beta+\mu}{N}=2,
\end{equation*}
i.e.,
\begin{equation*}
	s=\frac{2N}{2N-2\beta -\mu}.
\end{equation*}
This means we must require
\begin{equation*}
	F(u)\in L^{\frac{2N}{2N-2\beta -\mu}}(\mathbb{R}^N).
\end{equation*}
By considering $N\geq 3$ and the pure power $F(t)=|t|^{q}$, we may use Sobolev embedding when
\begin{equation*}
	\frac{2Nq}{2N-2\beta-\mu} \in \left[2,2^*\right],
\end{equation*}
i.e., when the exponent $q$ satisfies
\begin{equation*}
	2_{^*\beta,\mu}:=\frac{2N-2\beta-\mu}{N} \leq q\leq 	\frac{2N-2\beta-\mu}{N-2}=: 2^*_{\beta,\mu}.
\end{equation*}
The lower bound $2_{^*\beta,\mu}$ is called \textit{lower critical exponent} by Du-Gao-Yang, in \cite{WS7}, and the authors also called $2^*_{\beta,\mu}$ the \textit{upper critical exponent} in the sense of the weighted Har{d}y-Littlewood-Sobolev inequality. Roughly speaking, for a bounded domain $\Omega\subset\mathbb{R}^2$,  we obtain this kind of embedding when $q\in \left[\dfrac{4-2\beta +\mu}{2}, \infty\right]$, which do not occur in $L^{\infty}(\Omega)$. Hence, to overcome the obstacle in the limiting case, the Trudinger-Moser
inequality can be treated as a substitute of the Sobolev inequality (see \cite{Moser,Trudinger}).  This inequality can be summarized as follows
\begin{equation}\label{T-M}
	\displaystyle\sup_{\substack{u\in H^1_0(\Omega)\\ \|\nabla u\|_2=1}} \int_{\Omega} e^{\beta u^2} \; \mathrm{d} x\left\{\begin{array}{lcr}
		C|\Omega|& \mbox{if} & \beta \leq 4\pi; \\
		=+\infty & \mbox{if} &  \beta> 4\pi.
	\end{array}
	\right.
\end{equation}
Therefore, from \eqref{T-M}, we have naturally associated notions of subcriticality and criticality (see \cite{Adimurthi,Miyagaki}): we say that
\begin{description}
	\item[$(SG)$] $f(t)$ has subcritical growth  if for all $\alpha$ one has
	\begin{equation*}
		\displaystyle \lim_{|t|\rightarrow +\infty} \frac{|f(t)|}{e^{\alpha t^2}}=0;
	\end{equation*}
	
	\item[$(CG)$]
	$f(t)$ has critical growth 
	\begin{equation*}
		\begin{array}{lrlr}\displaystyle\lim_{|t|\rightarrow +\infty} \frac{|f(t)|}{e^{\alpha t^2}}=0,& \forall \alpha>4\pi;&\displaystyle\lim_{t\rightarrow +\infty} \frac{|f(t)|}{e^{\beta t^2}}= +\infty,& \forall \alpha<4\pi.
		\end{array}
\end{equation*}\end{description}

This definition was introduced by  \cite{Adimurthi}; see also  \cite{Miyagaki} for further reference. However, the supremum in \eqref{T-M} becomes infinite for domains with infinite measure, rendering this inequality inapplicable to unbounded domains. In our context, we will present the following version of the Trudinger-Moser inequality on the entire $\mathbb{R}^2$, as found in \cite{Cao,doO}.
\begin{lema}
	If $\alpha>0$ and $u \in H^1(\mathbb{R}^2)$, then
	\begin{equation}\label{exponencialfinita}\int_{\mathbb{R}^2} (e^{\alpha u^2}- 1) \mathrm{d} x< \infty.\end{equation}
	Moreover,
	if $\|\nabla u\|_2\leq1$, $\|u\|_2\leq M<\infty$, and $\alpha<4\pi$, then there exists a constant $C$, which depends only on $M$ and $\alpha$, such that
	
	\begin{equation}\label{TM}\sup_{\substack{u\in H^1(\mathbb{R}^2)\\ \|\nabla u\|_2=1}} \int_{\mathbb{R}^2} (e^{\alpha u^2}- 1) \mathrm{d} x< C(\alpha,M).\end{equation}
\end{lema}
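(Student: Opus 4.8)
The plan is to reduce both assertions to the classical Moser inequality \eqref{T-M} on bounded domains by splitting $\mathbb{R}^2$ according to the size of $u$ and treating the ``large'' region through a truncation that is compactly supported and has controlled Dirichlet energy. The elementary fact driving everything is that, writing $\Phi(t)=e^{\alpha t}-1$, the map $t\mapsto \Phi(t)/t$ is increasing on $(0,\infty)$; hence on a sublevel set $\{|u|\le L\}$ one has the pointwise bound $e^{\alpha u^2}-1\le \frac{e^{\alpha L^2}-1}{L^2}\,u^2$, so that this part of the integral is always controlled by $\|u\|_2^2$. All the difficulty is thus concentrated on the superlevel set $\{|u|>L\}$, whose measure is at most $\|u\|_2^2/L^2$ by Chebyshev.

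For the uniform bound \eqref{TM}, assume $\|\nabla u\|_2\le1$ and $\|u\|_2\le M$. First I would replace $u$ by its symmetric decreasing rearrangement $u^{*}$: since $e^{\alpha t^2}-1$ is increasing and vanishes at $0$, equimeasurability gives $\int_{\mathbb{R}^2}(e^{\alpha u^2}-1)=\int_{\mathbb{R}^2}(e^{\alpha (u^*)^2}-1)$, while the P\'olya--Szeg\H{o} inequality yields $\|\nabla u^*\|_2\le\|\nabla u\|_2\le1$ and $\|u^*\|_2=\|u\|_2\le M$, so it suffices to treat radially decreasing $u=u^{*}\ge0$. Taking $L=1$, the level set $\{u>1\}$ is a ball $B_\rho$, and from $M^2\ge\int_{B_\rho}u^2\ge\pi\rho^2$ we get the crucial bound $|B_\rho|=\pi\rho^2\le M^2$. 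On $B_\rho$ put $w=(u-1)^{+}=u-1\in H_0^1(B_\rho)$, so that $\|\nabla w\|_{L^2(B_\rho)}\le\|\nabla u\|_2\le1$; Young's inequality gives, for any $\epsilon>0$,
\begin{equation*}
\alpha u^2=\alpha(w+1)^2\le \alpha(1+\epsilon)\,w^2+\alpha\!\left(1+\tfrac{1}{\epsilon}\right).
\end{equation*}
Since $\alpha<4\pi$, I would fix $\epsilon>0$ with $\alpha(1+\epsilon)\le4\pi$ and normalise $\tilde w=w/\|\nabla w\|_2$ (so $w^2\le\tilde w^2$ and $\|\nabla\tilde w\|_2=1$); then \eqref{T-M} applied on $B_\rho$ yields $\int_{B_\rho}e^{\alpha(1+\epsilon)w^2}\le C|B_\rho|\le CM^2$. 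Combining the two regions,
\begin{equation*}
\int_{\mathbb{R}^2}(e^{\alpha u^2}-1)\le (e^{\alpha}-1)M^2+e^{\alpha(1+1/\epsilon)}CM^2=:C(\alpha,M),
\end{equation*}
which is \eqref{TM}.

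For the finiteness statement \eqref{exponencialfinita}, the point is that now $\alpha>0$ is arbitrary, so one can no longer keep the exponential coefficient below $4\pi$ by a fixed truncation; the extra ingredient is that $\int_{\{|u|>L\}}|\nabla u|^2\to0$ as $L\to\infty$, by absolute continuity of the integral together with $|\{|u|>L\}|\le\|u\|_2^2/L^2\to0$. I would thus choose $L$ so large that $w_L=(|u|-L)^{+}$ satisfies $2\alpha\|\nabla w_L\|_2^2\le4\pi$; repeating the truncation--Young estimate on the finite-measure set $\{|u|>L\}$ and invoking Moser's inequality there (via rearrangement to a ball of the same measure) bounds $\int_{\{|u|>L\}}(e^{\alpha u^2}-1)$, while the sublevel part is $\le\frac{e^{\alpha L^2}-1}{L^2}\|u\|_2^2<\infty$ as above.

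The main obstacle is precisely the passage from the unbounded plane to the bounded-domain inequality \eqref{T-M} while respecting the critical threshold $4\pi$: this is what forces the truncation $w=(|u|-L)^{+}$ together with the Young splitting that absorbs the cross term $2\alpha wL$, and, for \eqref{exponencialfinita}, the energy-vanishing $\int_{\{|u|>L\}}|\nabla u|^2\to0$ that restores subcriticality for every $\alpha$. The remaining ingredients---equimeasurability, the measure estimate $|\{u>1\}|\le M^2$, and the finite-measure version of Moser's inequality---are standard.
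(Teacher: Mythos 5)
Your proof is correct, and it is essentially the argument behind the result the paper invokes: the paper itself gives no proof of this lemma, quoting it directly from \cite{Cao,doO}, and your combination of symmetric rearrangement, truncation at a fixed level, Young's splitting of the cross term $2\alpha wL$, and the bounded-domain Moser inequality \eqref{T-M} on the superlevel ball (together with the energy-vanishing observation $\int_{\{|u|>L\}}|\nabla u|^2\to 0$ to restore subcriticality for arbitrary $\alpha$ in \eqref{exponencialfinita}) is the classical proof found in those references. The only points worth a word in a written version are the standard facts that $(u^*-1)^+$ restricted to $B_\rho$ lies in $H_0^1(B_\rho)$, that $\{u^*>1\}$ is a ball up to a null set, and the trivial degenerate case $\|\nabla w\|_2=0$; none of these affects the argument.
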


We highlight that nowadays attention is being given to problems involving the Stein-Weiss convolution term with a nonlinearity exhibiting exponential growth. In \cite{WS6}, the authors study planar Schrödinger equations with Stein-Weiss convolution parts and a nonlinearity that exhibits critical exponential growth in the Trudinger-Moser sense. They leveraged the general minimax principle to investigate the existence of mountain-pass type solutions for the equation and demonstrated that the mountain-pass value equals the least energy level. In \cite{WS8}, it was proven that normalized solutions exist for a nonlinear Schrödinger equation with the Stein-Weiss reaction under the exponential critical growth case in $\mathbb{R}^2$. In \cite{WS2}, positive solutions to a class of quasilinear Schrödinger equations involving Stein-Weiss type convolution and exponential nonlinearity in $\mathbb{R}^N$ were investigated. In \cite{WS9}, the existence of solutions was demonstrated for a class of quasilinear Schrödinger equations introduced in plasma physics and nonlinear optics with Stein-Weiss convolution parts and a critical exponential growth nonlinearity in $\mathbb{R}^2$. We also refer readers to \cite{alvesetal,alvesMimbo,deng} for studies on Choquard-type equations with exponential nonlinearities in $\mathbb{R}^2$.

However, as far as we know, there has not been any work that addresses problems with Stein-Weiss convolution term combined with a nonlinearity involving exponential growth concerning nodal solutions. Based on this observation, our main goal here is to demonstrate that for any $k\in\mathbb{N}$, there exists a solution to \eqref{problem} with exactly $k$ nodes. For this matter, we will use a gluing approuach in the same spirit of \cite{GuiGuo,nodal}.

This kind of argument was developed in \cite{primeiro,segundo}. They first obtained the least energy solutions of a problem without the convolution term in each annulus, including every ball and its complement, and then glued them by matching the normal derivative at each junction point. However, this method cannot be applied directly to an equation with nonlocal terms, because we cannot solve this kind of term separately on each annulus once solutions require global information. To address this, we use a technique developed by Y. Deng, S. Peng, and W. Shuai in \cite{Chern} for a problem with a nonlocal Chern–Simons term. They overcame this difficulty by treating their problem as a system of 
$(k+1)$ equations with 
$(k+1)$ unknown functions, each supported on a single annulus and vanishing outside it. This method was adapted for a Choquard-type equation with subcritical Sobolev-Hardy-Littlewood growth in \cite{GuiGuo, nodal}.  

Here, our  main challenge lies in applying a gluing approach together with the Nehari
manifold argument and a problem involving exponential nonlinearity. Summarizing, initially, we divide $\mathbb{R}^2$ into $k+1$ symmetric distinct parts (one ball, $k-1$ annuli, and one ball complement). After that, we define  $k+1$ suitables Hilbert spaces of functions, each one with domains in these respective parts of $\mathbb{R}^2$, and introduce an auxiliary functional in the cartesian product of these spaces. So, using a Nehari manifold that depends on $k$, we seek critical points of this  functional with $k+1$ components $u_i$, for $i=1,\cdots , k+1$, each  with a definite sign. As we are working with an exponential nonlinearity, the Trudinger-Moser inequality is essential to obtain suitable estimates. In the subcritical case, as we can take $\alpha>0$ in \eqref{TM}, we can derive some estimates quickly. However, in the critical case, we need to establish an appropriate minimax estimate of this functional to achieve similar results as in the previous case. Finally, by gluing $u_1,\cdots, u_{k+1}$, and adjusting their signs as necessary, we define $u=\Sigma_{i=1}^{k+1}u_i$. Then, by the Brouwer degree, we prove that $u$ is indeed a solution for \eqref{problem}.

In this paper, the novelty lies in applying this approach to a problem that combines an exponential nonlinearity with a convolution term using Stein-Weiss weights. Consequently, we encounter new difficulties since we deal with a general nonlinearity that initially satisfies mild properties. However, to utilize this method effectively, we need to calculate second-order derivatives of the functional associated to an auxiliary problem, which plays an important role. Therefore, it is natural to assume certain behavior under the derivative of the nonlinearity. Thus, we make non-standard assumptions regarding $f$ (see $(f_3)$). We emphasize that although these new hypotheses are not very common, standard examples of nonlinearity with exponential growth satisfy them. For further details, please refer to items (a) and (c) of Remark \ref{obs1} and Remark \ref{obs3}.

Finally, we point out that our contributions complement and extend some results from \cite{alvesMimbo,GuiGuo} in some ways. Specifically, we have derived both subcritical and critical exponential versions of \cite[Theorem 1.1-(i)]{GuiGuo} when 
$ N=2$. Additionally, we have obtained nodal solutions for a problem similar to the one addressed in \cite{alvesetal}. In both cases, our results align with those in \cite{alvesMimbo, GuiGuo} when $\beta=0$.

\subsection{Hypotheses and main results}
We will present some hypotheses to study Problem \eqref{problem} variationally. Initially, we assume

\begin{description}
	\item[(V)] $V\in C([0,\infty], \mathbb{R}^+)$ is radial and bounded below a positive
	constant.\end{description}
Denote $H_{\mathrm{rad}}^{1}(\mathbb{R}^N)$ the radial function subspace of $H^1(\mathbb{R}^N)$ and
\[H_V := \left\{u\in H_{\mathrm{rad}}^{1}(\mathbb{R}^N);  \int_{\mathbb{R}^N}V(|x|)u^2\, \mathrm{d}x< \infty\right\},\]
by {\bf (V)}, we have that $H_V$ is a Hilbert space with the inner product
\[\langle u,v\rangle = \int_{\mathbb{R}^2}\nabla u\nabla v \mathrm{d}x+ \int_{\mathbb{R}^2} V(x)uv \mathrm{d}x \] and the norm
\[\|u\|= \int_{\mathbb{R}^2}|\nabla u|^2 \mathrm{d}x+ \int_{\mathbb{R}^2} V(x)u^2. \mathrm{d}x \]
Before stating our main results, we shall introduce the following assumptions on the nonlinearity:
\begin{itemize}
	\item[$(f_1)$]$f \in {C^1}(\R,\R^+)$ and $f(s)=0$ if $s\leq 0$;
	\item[$(f_2)$] $f(s)= o(s^{\frac{3[4 - (\mu + 2\beta)]}{2}-1})$, as $s \rightarrow 0;$
	\item[$(f_3)$]there exist  ${\theta} >2$ such that $f'(s)s^2>f(s)s> \theta F(s)>0$ for all $s>0$.

\end{itemize}
The main results of this paper are presented below. \begin{theo}
	\label{teo1} Assume  {\bf (V)}, $(SG)$, $(f_1)-(f_3)$, and  
	\begin{itemize}
		
		\item[$(f_4)$]there exist $s_0$ and $M>0$ such that
		\begin{equation*} F(s)=\displaystyle\int_0^sf(t)\; \ud t\leq M f(s) \mbox{ for all } s>s_0.\end{equation*}
		
	\end{itemize} Then, for each $k\in \mathbb{N}$, Problem \eqref{problem} has a radial symmetric ground state solution $u_k$ that changes sign $k$ times. 
\end{theo}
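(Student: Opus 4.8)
The plan is to realise the desired nodal solution as a glued configuration of $k+1$ positive bumps, each living on one piece of a radial decomposition of the plane, following the product--space/Nehari scheme announced in the introduction. I would first fix the variational framework by introducing on $H_V$ the energy
\[
I(u)=\frac12\|u\|^2-\frac12\int_{\mathbb{R}^2}\!\int_{\mathbb{R}^2}\frac{F(u(x))F(u(y))}{|x|^{\beta}|y-x|^{\mu}|y|^{\beta}}\,\ud x\,\ud y ,
\]
and checking that $I$ is well defined and of class $C^2$. Well-definedness follows from the Stein--Weiss estimate \eqref{Ls1} (with $N=2$, $\gamma=\beta$), which reduces matters to $F(u)\in L^{s}$ with $s=\tfrac{4}{4-2\beta-\mu}$; hypothesis $(f_2)$ controls $F(u)$ near the origin (its exponent is tuned precisely so that $|F(u)|^{s}$ is dominated by the power $|u|^{6}$, integrable by the embedding $H^1(\mathbb{R}^2)\hookrightarrow L^{6}$), while $(SG)$ together with the Trudinger--Moser inequality \eqref{exponencialfinita} controls $F(u)$ at infinity. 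The $C^1$ and, crucially, the $C^2$ regularity needed for the later degree argument rest on $(f_1)$ and on the control of $f'$ furnished by $(f_3)$.

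Next I would set up the gluing. Fix radii $0=r_0<r_1<\cdots<r_k<r_{k+1}=+\infty$, let $A_i=\{x:r_{i-1}<|x|<r_i\}$, and let $H_i\subset H_V$ be the closed subspace of radial functions vanishing a.e.\ outside $A_i$. On the product $\mathbf{E}=H_1\times\cdots\times H_{k+1}$ I would minimise
\[
\Phi(\mathbf{u})=\frac12\sum_{i=1}^{k+1}\|u_i\|^2-\frac12\int_{\mathbb{R}^2}\!\int_{\mathbb{R}^2}\frac{F(w(x))F(w(y))}{|x|^{\beta}|y-x|^{\mu}|y|^{\beta}}\,\ud x\,\ud y,\qquad w=\sum_{i=1}^{k+1}u_i ,
\]
over the $k$-dependent Nehari set $\mathcal{M}=\{\mathbf{u}\in\mathbf{E}:u_i\neq0,\ \langle\partial_{u_i}\Phi(\mathbf{u}),u_i\rangle=0,\ i=1,\dots,k+1\}$. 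Because the supports $A_i$ are essentially disjoint, the quadratic part splits, whereas the Stein--Weiss term couples all components through the cross integrals over $A_i\times A_j$; this coupling is exactly the nonlocal obstruction that forbids solving one annulus at a time. The superquadraticity in $(f_3)$ (an Ambrosetti--Rabinowitz condition with $\theta>2$) guarantees that every fibre carries a unique positive Nehari point, that $\mathcal{M}$ is a natural $C^1$ constraint bounded away from the origin, and that minimising sequences are bounded.

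To extract a minimiser I would use that, on the radial space, the embeddings $H_V\hookrightarrow L^{p}(\mathbb{R}^2)$ are compact for $p>2$; combined with $(SG)$ and the Trudinger--Moser inequality \eqref{TM}\,---\,where in the subcritical regime any $\alpha>0$ is admissible, so the nonlinear term passes to the limit along minimising sequences without any threshold estimate\,---\,and with $(f_4)$ to secure the Palais--Smale analysis, this yields a nonzero minimiser $\mathbf{u}=(u_1,\dots,u_{k+1})$ of $\Phi$ on $\mathcal{M}$ with each $u_i$ of one sign, each solving in the interior of $A_i$ the associated Euler--Lagrange equation with Dirichlet data on $\partial A_i$. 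Since $(f_1)$ prescribes $f$ only on $[0,\infty)$, I extend it oddly and correspondingly work with $s\mapsto F(|s|)$, so that each annulus interacts through $F(|u_i|)$ and every component may be taken positive.

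Finally comes the gluing, which I expect to be the crux. Setting $u=\sum_{i=1}^{k+1}(-1)^{i+1}u_i$ produces a radial function changing sign exactly $k$ times and satisfying \eqref{problem} on each open annulus; what remains is to match the radial derivatives across the $k$ interfaces $|x|=r_i$. I would encode these $k$ jumps as a continuous map $\mathbf{G}$ of the radii vector $\mathbf{r}=(r_1,\dots,r_k)$ and show, via the Brouwer degree, that $\mathbf{G}$ vanishes at some admissible interior $\mathbf{r}^{*}$, the degenerating behaviour of the energy as any $r_i\to r_{i-1}$ keeping the zero away from the boundary of the admissible simplex. The continuity\,---\,indeed differentiability\,---\,of $\mathbf{G}$ in $\mathbf{r}$ is exactly where the $C^2$-regularity and the non-standard hypothesis $(f_3)$ on $f'$ enter. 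The hard part is that, by nonlocality, $\mathbf{G}$ cannot be evaluated componentwise: perturbing a single radius perturbs the entire coupled system, so establishing the regularity and boundary behaviour of $\mathbf{G}$ forces the full second-order analysis of $\Phi$. Once $\mathbf{r}^{*}$ is found, $u=u_k$ is $C^1$ across every interface, hence a weak and (by elliptic regularity) classical solution of \eqref{problem}; its construction as a fibrewise least-energy configuration makes it a ground state among radial solutions with $k$ nodes.
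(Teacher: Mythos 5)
Your first half---the product space $H_1\times\cdots\times H_{k+1}$, the coupled functional with cross Stein--Weiss terms, the Nehari-type constraint, and the extraction of a fixed-radii minimizer with one-signed components via compact radial embeddings and the Trudinger--Moser inequality (any $\alpha>0$ being admissible in the subcritical case)---is essentially the paper's Lemmas \ref{multipleneharivetor}--\ref{Neharibounded}. The gap is in your final step. You propose to glue by matching the radial derivatives across the interfaces, encoding the $k$ jumps as a map $\mathbf{G}$ of the radii vector and finding a zero of $\mathbf{G}$ by Brouwer degree in the radii variables. This is precisely the Bartsch--Willem/Cao--Zhu scheme that the paper points out cannot be carried over to the nonlocal setting, and your sketch does not remove the obstruction: (i) for fixed radii the minimizer of $\Phi$ on $\mathcal{M}$ is not known to be unique, so $\mathbf{G}$ is not a well-defined (let alone continuous) single-valued map of $\mathbf{r}$; (ii) even granting a continuous selection, you give no argument that $\mathbf{G}$ has nonzero degree on the admissible simplex---the blow-up of the energy as $r_i-r_{i-1}\to 0$ or $r_k\to\infty$ controls the level $d(R_k)$, not the derivative jumps, and no homotopy to a map of known degree is indicated. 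The second-order analysis of $\Phi$ that you invoke does not produce either of these facts.

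The paper's route is different and avoids both issues. After the fixed-radii minimization, one minimizes the level $d(R_k)$ itself over the radii: Lemma \ref{tecnic} shows that $d(R_k)\to+\infty$ when $r_i-r_{i-1}\to 0$ or $r_k\to\infty$ and that $d$ is continuous on $\Gamma_k$, so the infimum $c_k=\inf_{R_k\in\Gamma_k}d(R_k)$ in \eqref{jd2} is attained at some $\bar R_k$. Criticality of the glued function $u_k=\sum_i w_i^{\bar R_k}$ for $J$ is then proved by contradiction (Proposition \ref{prooftheo1}): if $J'(u_k)\phi=-2$ for some radial $\phi$, one perturbs $\bar g(\tilde t)=\sum_i t_i^{1/q}w_i^{\bar R_k}+\tau\eta(\tilde t)\phi$ and applies Brouwer degree in the scaling variables $\tilde t=(t_1,\dots,t_{k+1})$---not in the radii---to the map $H(\tilde t)$ built from $\langle J'(\bar g(\tilde t)),\bar g(\tilde t)\mid_{\Omega_j}\rangle$, using the strict concavity from Lemma \ref{multipleneharivetor} to control $h_s$ on $\partial B_{\tau/2}(y_0)$; this produces $\tilde t_0$ with $\bar g(\tilde t_0)\in\mathcal N_k$ (its nodes having moved freely with $\tilde t$), whence $J(\bar g(\tilde t_0))\geq c_k$ by \eqref{ck}, while the perturbation forces $J(\bar g(\tilde t_0))<c_k$---a contradiction. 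It is the definition of $c_k$ as an infimum over all radii configurations, combined with this degree argument in the $t$-variables, that replaces derivative matching. To repair your proof you would need either to substitute this argument for your final step, or to supply the missing selection/continuity and boundary-degree analysis for $\mathbf{G}$, which is not available for the coupled nonlocal system.
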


\begin{theo}
	\label{teo2} Assume  {\bf (V)}, $(CG)$, $(f_1)-(f_3)$, and  
	\begin{itemize}

		\item[$(f_5)$] there are constants $p>4 -(\mu+2\beta)$ and $C_{p}>0$ such that
		\[
		F(s)\geq C_{p}s^{p}, \quad\text{for all}\quad s\geq 0,\]

	\end{itemize}	with $C_p$ large enough. Then, for each $k\in \mathbb{N}$, Problem \eqref{problem} has a radial symmetric ground state solution $u_k$ that changes sign $k$ times. 
\end{theo}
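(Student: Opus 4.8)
The plan is to construct the $k$-node solution as a minimizer of the energy over a vector-valued Nehari set adapted to a radial partition of $\mathbb{R}^2$, and then to upgrade that constrained minimizer to a genuine weak solution by a Brouwer-degree argument. Both theorems share this skeleton and differ only in how compactness of minimizing sequences is secured. First I would fix $k$ and radii $0=r_0<r_1<\cdots<r_k<r_{k+1}=\infty$, letting $A_i$ denote the $i$-th radial region (the ball $B_{r_1}$, the $k-1$ annuli, and the exterior $\{|x|>r_k\}$). For each $i$ let $H_i\subset H_V$ be the closed subspace of radial functions supported on $\overline{A_i}$ and vanishing on $\partial A_i$, equip the product $\mathbf{H}=H_1\times\cdots\times H_{k+1}$ with the natural norm, and restrict attention to the cone where each $u_i$ carries sign $(-1)^{i+1}$. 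Writing $\mathbf{u}=(u_1,\dots,u_{k+1})$ and $u=\sum_i u_i$, the essentially disjoint supports force $u$ to change sign exactly $k$ times and give $f(u)=\sum_i f(u_i)$ a.e. The energy is
\[ I(u)=\frac{1}{2}\|u\|^2-\frac{1}{2}\int_{\mathbb{R}^2}\int_{\mathbb{R}^2}\frac{F(u(x))F(u(y))}{|x|^{\beta}|y-x|^{\mu}|y|^{\beta}}\,\mathrm{d}x\,\mathrm{d}y, \]
and one sets $J(\mathbf{u})=I(u)$. Because the double integral couples all components through the convolution, $J$ does not split as a sum over $i$ — this is precisely why one must work with the system. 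I then impose the componentwise constraints
\[ G_i(\mathbf{u})=\|u_i\|^2-\int_{A_i}\frac{f(u_i)u_i}{|x|^{\beta}}\Big(\int_{\mathbb{R}^2}\frac{F(u(y))}{|y-x|^{\mu}|y|^{\beta}}\,\mathrm{d}y\Big)\,\mathrm{d}x, \]
define the Nehari set $\mathcal{N}_k=\{\mathbf{u}:u_i\neq 0,\ G_i(\mathbf{u})=0\ \forall i\}$, and put $c_k=\inf_{\mathcal{N}_k}J$.

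Next I would establish the fibering geometry. Using $(f_1)$--$(f_3)$ — in particular the monotonicity $f'(s)s>f(s)$ (equivalently $s\mapsto f(s)/s$ increasing, contained in $f'(s)s^2>f(s)s$) together with the Ambrosetti--Rabinowitz inequality $f(s)s>\theta F(s)$, $\theta>2$ — I show that for fixed nonzero profiles the map $(t_1,\dots,t_{k+1})\mapsto\big(G_1(t\cdot\mathbf{u}),\dots,G_{k+1}(t\cdot\mathbf{u})\big)$ has a unique positive zero at which each ray $t_i\mapsto J$ attains a strict interior maximum. This gives $\mathcal{N}_k\neq\emptyset$, $c_k>0$, and boundedness of minimizing sequences. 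Attainment then follows by compactness: the radial embeddings $H^1_{\mathrm{rad}}(\mathbb{R}^2)\hookrightarrow L^q$, $q>2$, are compact (Strauss), and the exponential term is controlled by the Trudinger--Moser inequalities \eqref{exponencialfinita}--\eqref{TM}. Here the two theorems diverge. Under $(SG)$ and $(f_4)$ any $\alpha>0$ is admissible in \eqref{TM}, so the nonlinear term converges along the minimizing sequence with no energy threshold and $(f_4)$ provides the Lions-type nonvanishing keeping each weak limit nontrivial on its region. Under $(CG)$ and $(f_5)$ one must instead show that $c_k$ lies strictly below the Trudinger--Moser compactness threshold before extracting a limit; this is exactly where $(f_5)$ with $C_p$ large enters, by testing $J$ on suitable Moser-type concentrating functions to push the level down.

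It remains to turn the constrained minimizer $\mathbf{u}$ into a solution. As a critical point of $J$ on $\mathcal{N}_k$, Lagrange multipliers $\lambda_1,\dots,\lambda_{k+1}$ satisfy $I'(u)$ restricted to $H_i$ equal to $\sum_j\lambda_j\,\partial_{u_i}G_j(\mathbf{u})$. Testing against $u_i$ and using $G_i(\mathbf{u})=0$ produces the linear system $\sum_j M_{ij}\lambda_j=0$, where $M_{ij}=\partial_{t_j}\big|_{t=\mathbf 1}G_i(t\cdot\mathbf{u})$ is the Jacobian of the scaling map. The non-standard hypothesis $(f_3)$, through $f'(s)s^2>f(s)s$, is precisely what forces the diagonal of $M$ to dominate the off-diagonal coupling coming from the nonlocal term, so that $M$ is nonsingular; equivalently, the Brouwer degree of the scaling map on a suitable box around $\mathbf{1}$ is $\pm 1$. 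Hence all $\lambda_j=0$, the normal derivatives match at every junction $r_i$ with no boundary defect surviving, and $u$ is a weak — and by elliptic regularity classical — radial solution of \eqref{problem} changing sign exactly $k$ times; minimality of $c_k$ makes it a ground state in this class.

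The main obstacle I anticipate is the interaction of the critical-growth compactness with the multiplier argument. In the critical case one must simultaneously (i) keep $c_k$ below the Trudinger--Moser threshold, where the weights $|x|^{-\beta}$, $|y|^{-\beta}$ and the cross-region coupling complicate the standard Moser-function energy estimate, so the constant $C_p$ in $(f_5)$ must be calibrated with care; and (ii) verify nonsingularity of the matrix $M$ for a merely $C^1$ nonlinearity, which demands the second-order control in $(f_3)$ and an honest computation of the mixed derivatives generated by the Stein--Weiss convolution. Getting both estimates to hold at the same energy level, rather than in isolation, is the delicate point of the argument.
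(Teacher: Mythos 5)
Your proposal tracks the paper through its first two stages: the fibering/uniqueness lemma on the product Nehari set (Lemmas \ref{multipleneharivetor} and \ref{manifold}), the strict level bound obtained from $(f_5)$ with $C_p$ large (Proposition \ref{levelll} — though note the paper tests with fixed $\psi_i\in C_0^{\infty}(B_i)$ rather than Moser-type concentrating functions; the whole point of taking $C_p$ large is that concentration analysis is unnecessary, so the "calibration" you worry about never arises), and the below-threshold compactness for minimizing sequences (Lemma \ref{Neharibounded2}). The genuine gap is in your final step. You fix the radii $r_1<\cdots<r_k$ once and for all, and you then claim that once the Lagrange multipliers vanish, "the normal derivatives match at every junction" and $u=\sum_i u_i$ solves \eqref{problem}. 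This is a non sequitur: vanishing multipliers only make $\mathbf{u}$ a critical point of the auxiliary functional on $H_1\times\cdots\times H_{k+1}$, i.e.\ each component solves the Dirichlet system \eqref{probi} on its own region, where the admissible test functions vanish on every interface $|x|=r_i$. Criticality against such test functions says nothing about test functions that are nonzero on the interfaces; for a generic \emph{fixed} partition the jumps of the normal derivatives across the spheres $|x|=r_i$ survive as a defect in $J'(u)$, and the glued function is not a weak solution of \eqref{problem}. No hypothesis on $f$ can repair this: it is a structural consequence of freezing the partition.

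The paper supplies two further ingredients that your outline omits. First, the partition is optimized: Lemma \ref{tecnic} shows that $R_k\mapsto d(R_k)$ is continuous on $\Gamma_k$ and blows up as $r_i-r_{i-1}\to 0$ or $r_k\to\infty$, so the infimum in \eqref{jd2} is attained at some interior $\bar{R}_k$. Second, even at the optimal partition, passage to a solution of \eqref{problem} is proved by contradiction via a Brouwer-degree argument (Proposition \ref{prooftheo1}, invoked again in Proposition \ref{prooftheo2}): assuming $J'\bigl(\sum_i w_i^{\bar{R}_k}\bigr)\phi=-2$ for some radial $\phi$, one perturbs along $\bar{g}(\tilde{t})=\sum_i t_i^{1/q}w_i^{\bar{R}_k}+\tau\eta(\tilde{t})\phi$ and uses the degree — this is where the degree genuinely enters, not merely as a restatement of the nonsingularity of your matrix $M$, which the paper establishes separately in Lemma \ref{manifold} — to find $\tilde{t}_0$ with $\bar{g}(\tilde{t}_0)\in\mathcal{N}_k$. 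This step is possible only because the paper's set $\mathcal{N}_k$ lets the nodal radii move with the perturbation, and it yields $J(\bar{g}(\tilde{t}_0))<c_k$, contradicting \eqref{ck}. Without the optimization over $\Gamma_k$ and this perturbation argument, your constrained minimizer solves the system \eqref{probi} but not Problem \eqref{problem}, so the proof as written does not reach the stated conclusion.
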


\begin{obs}\label{obs1} Here we present some details regarding the hypotheses of the previous theorems.
	\begin{itemize}

		\item[(a)] The assunption $(f_3)$ include the well-known Ambrosetti--Rabinowitz condition, in exact terms,  there exist $\theta>2$ such that
		\begin{equation}\label{h2}\theta F(s) \leq  f(s)s \quad\mbox{for all}\quad  s>0.
		\end{equation}
		The Ambrosetti-Rabinowitz condition has been used in most studies and plays an important role in investigating the existence of nontrivial solutions for many nonlinear elliptic problems. Here, we use \eqref{h2} to ensure the uniqueness of a global point of an auxiliary functional on a specific set $\mathcal{\tilde{N}}$. However, this condition is not sufficient to guarantee this result; in this case, we need more. Specifically, we use the fact that  $f'(s)>0$. Moreover, to prove that $\mathcal{\tilde{N}}$ is a kind of Nehari manifold, we also need 
		$f'(s)s^2>f(s)$ ( for detail, see Lemmas \ref{multipleneharivetor} and \ref{manifold}). For this matter, we assume $(f_3)$ as in \cite{Clapp}, where the authors also address nodal solutions, but, for problems with nonlinearities exhibiting Sobolev growth.
		
		\item[(b)] We can see that from conditions $(f_1)$ and $(f_4)$, it follows that there are constants $C, s_0>0$ such that
		\begin{equation*}\label{exponen}
			\displaystyle F(s)>C e^{{|s|}/{M}}\quad\mbox{for all}\quad s>s_0.
		\end{equation*}
		Hence we can see that $F(s)$ and $f(s)$ have exponential growth for $t$ large enough.
		Consequently, as $f$ is continous, there exist $C>0$ and $p>2$ such that  
		\begin{equation}\label{estfrombelow}F(s) >C|s|^{p} \quad {\mbox{for }\quad s>s_0}.\end{equation}
		\item[(c)]Except for assuming  $f'(s)s^2> f(s)s$, conditions $(f_1)-(f_5)$ are the standard assumptions for treating a problem with exponential nonlinearity variationally. However, models for 
		$f$ that satisfy these usual hypotheses naturally meet the condition $f'(s)s^2> f(s)s$. For instance,  
		$$f(s)=  s^{\frac{3[4 - (\mu + 2\beta)]}{2}-1}+ e^s$$ is a model of nonlinearity for Theorem \ref{teo1}, and	$$f(t)=  s^{\frac{3[4 - (\mu + 2\beta)]}{2}-1}+ e^{4\pi s^2},$$  for Theorem \ref{teo2}.
		
		\item[(d)] In our context, we assume $(f_4)$ in order to obtain \eqref{estfrombelow}, which helps us to show a certain behavior of functional associated to Problem \eqref{problem} at infnity. Notice that $(f_5)$ guarantees  a similar estimate to \eqref{estfrombelow}, but requering an apropriated constant $C_p$ that should be large enough.    Hypotheses as $(f_5)$  were introduced by  \cite{Cao} and are used to prove that the minimax level of a functional associated to a problem involving a nonlinearity with  critical growth in the sense of Trudiger-Moser. However, in Theorem \ref{teo2}, $(f_5)$ also be used in the place of $(f_4)$, in view of \eqref{estfrombelow}.
	\end{itemize}
\end{obs}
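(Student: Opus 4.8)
The remark collects several independent claims about the hypotheses, and the plan is to verify them one at a time, since each is short and of a different nature.

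For item (a) there is nothing to prove beyond reading $(f_3)$: its middle inequality $f(s)s>\theta F(s)>0$ is the strict form of the Ambrosetti--Rabinowitz inequality \eqref{h2} with the same $\theta>2$, while the flanking inequality $f'(s)s^2>f(s)s$ records that $s\mapsto f(s)/s$ is strictly increasing on $(0,\infty)$---the quantity differentiated in Lemmas \ref{multipleneharivetor} and \ref{manifold}. For item (b) I would first use $(f_1)$ and the positivity in $(f_3)$ to get $f>0$, hence $F>0$ and $F'=f$ on $(0,\infty)$; then $(f_4)$ becomes the differential inequality $F(s)\le M F'(s)$ for $s>s_0$, i.e. $(\log F)'(s)=F'(s)/F(s)\ge 1/M$. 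Integrating from $s_0$ (enlarged if necessary so that $s_0>0$ and $F(s_0)>0$) gives
\begin{equation*}
F(s)\ge F(s_0)\,e^{-s_0/M}\,e^{s/M}=:C\,e^{s/M},\qquad s>s_0,
\end{equation*}
with $C>0$, which since $s>0$ is the claimed exponential lower bound. To reach \eqref{estfrombelow}, fix any $p>2$: as $e^{s/M}/s^p\to\infty$ there is $s_1\ge s_0$ with $e^{s/M}\ge s^p$ for $s\ge s_1$, while on the compact $[s_0,s_1]$ the continuous positive $F$ has a positive minimum; shrinking the constant accordingly yields $F(s)>C'|s|^p$ for all $s>s_0$, which is exactly \eqref{estfrombelow}.

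For item (c), by (a) the condition $f'(s)s^2>f(s)s$ is equivalent to the strict monotonicity of $s\mapsto f(s)/s$, so for each model one computes $f'(s)s-f(s)$ and checks its positivity directly. The exponent $a:=\tfrac{3[4-(\mu+2\beta)]}{2}-1$ satisfies $a>2$ because $\mu+2\beta<2$, so the power term contributes the strictly increasing quotient $s^{a-1}$; differentiating the exponential factor ($e^s$ for Theorem \ref{teo1}, $e^{4\pi s^2}$ for Theorem \ref{teo2}) then completes the elementary verification. One also notes that each model realises the intended growth class, $(SG)$ or $(CG)$, and satisfies the remaining, standard, requirements among $(f_1)$--$(f_5)$.

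Finally, item (d) is pure pattern-matching: $(f_5)$ reads $F(s)\ge C_p s^p$ with $p>4-(\mu+2\beta)$, and since $\mu+2\beta<2$ one has $p>2$, so \eqref{estfrombelow} holds verbatim with constant $C_p$---this is why $(f_5)$ can replace $(f_4)$ in the critical Theorem \ref{teo2}. I expect the only step needing genuine care to be the verification in (c): tracking the sign of $f'(s)s-f(s)$ for an exponential model and confirming that $s\mapsto f(s)/s$ stays increasing is precisely where the non-standard half of $(f_3)$ is exercised, whereas the integration in (b) and the readings in (a) and (d) are routine.
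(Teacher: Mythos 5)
Your readings of items (a), (b) and (d) are correct and essentially the arguments the paper leaves implicit: in (b) the Gr\"onwall-type integration of $F\le Mf=MF'$, with $F(s_0)>0$ supplied by the positivity in $(f_3)$ (a point the remark's phrase ``from $(f_1)$ and $(f_4)$'' glosses over, and which you handle properly by enlarging $s_0$), gives the exponential lower bound, and the passage to \eqref{estfrombelow} by comparing $e^{s/M}$ with $s^{p}$ on $[s_1,\infty)$ and using the positive minimum of $F$ on $[s_0,s_1]$ is the natural route; in (d) the observation $p>4-(\mu+2\beta)>2$ is indeed all that is needed.

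The genuine gap is in item (c), precisely the step you yourself flagged as ``needing genuine care'' and then did not actually carry out. For $f(s)=s^{a}+e^{s}$ with $a=\tfrac{3[4-(\mu+2\beta)]}{2}-1$, a direct computation gives $f'(s)s-f(s)=(a-1)s^{a}+(s-1)e^{s}$, which tends to $-1$ as $s\to 0^{+}$; hence $f'(s)s^{2}>f(s)s$ \emph{fails} for small $s>0$, and your heuristic that the power term ``contributes the strictly increasing quotient $s^{a-1}$'' does not rescue the sum, because the exponential term's quotient $e^{s}/s$ is strictly decreasing on $(0,1)$ and dominates there. (The literal model also violates $(f_1)$ and $(f_2)$, since $f(0^{+})=1\neq 0$.) The same failure occurs for $s^{a}+e^{4\pi s^{2}}$. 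The verification does go through for the evidently intended models $f(s)=s^{a}+(e^{s}-1)$ and $f(s)=s^{a}+(e^{4\pi s^{2}}-1)$: in the first case $f'(s)s-f(s)=(a-1)s^{a}+g(s)$ with $g(s)=se^{s}-e^{s}+1$, $g(0)=0$, $g'(s)=se^{s}>0$; in the second, $f'(s)s-f(s)=(a-1)s^{a}+h(s)$ with $h(s)=8\pi s^{2}e^{4\pi s^{2}}-e^{4\pi s^{2}}+1$, $h(0)=0$, $h'(s)=8\pi se^{4\pi s^{2}}(1+8\pi s^{2})>0$, so positivity holds for all $s>0$ in both cases. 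To validate item (c) you must therefore either correct the models (subtracting $1$ from the exponentials, which also restores $(f_1)$--$(f_2)$ at the origin) or restrict the claim; as written, your assertion that differentiating the exponential factor ``completes the elementary verification'' is false.
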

\subsection{Outline}
This paper is organized as follows. In Sect.\ref{prel}, we introduce the tools needed to apply the gluing method. In Sect. \ref{subcriticalcase}, we consider a nonlinearity with subcritical growth and prove Theorem \ref{teo1}. In Sect. \ref{criticalcase}, we study Problem \eqref{problem} in the critical growth range. To achieve this, we demonstrate the boundedness of the minimax levels of the functional associated with an auxiliary problem and then prove Theorem \ref{teo2}.

\section{Preliminaries}\label{prel}
Here, we are concerned with Problem \eqref{problem}. Notice that the  energy functional associated to problem \eqref{problem} is $J: H_V\rightarrow \mathbb{R}$, given by
\begin{equation*}\label{fuctionalJ}
	J(u)= \dfrac{1}{2}\int_{\mathbb{R}^2}\left(|\nabla u|^2 + V(|x|)u^2\right) \, \mathrm{d} x -  \frac{1}{2}\int_{\mathbb{R}^2}\int_{\mathbb{R}^2}\displaystyle\dfrac{F(u)F(u)}{|x|^{\beta}|y-x|^{\mu}|y|^{\beta}}\, \mathrm{d} x\mathrm{d} y .		
\end{equation*} 
By the Weighted Hardy-Littlewood-Sobolev  and Trudiger Moser inequalities $J\in C^1(H_V,\mathbb{R})$, and by the principle of symmetric criticality, the critical points of $J$ in $H_V$ are weak solution of \eqref{problem}.

Now, we will present the elements necessary to apply the method for obtaining nodal solutions based on \cite{GuiGuo, nodal}. We set
\[c_0 =\inf_{u\in\mathcal{N}} J(u),\]
where
\[\mathcal{N}=\left\{u \in H_V\setminus\{0\}; J'(u)u=0\right\}\]
is the Nehari manifold. In order to state ours results conveniently, for each positive integer $k$, we denote ${R_k}=(r_1,..., r_k) \in \mathbb{R}^k$, with $0<r_1<r_2< \cdots < r_k< r_{k+1}= \infty$, define a Nehari type set 
\begin{equation*}\label{Neharik}
	\mathcal{N}_k= \left\{ \begin{array}{c}u\in H_V;\mbox{ there exists } R_k \mbox{ such that } u_i\neq 0 \mbox{ in } B_i,\\ u_i =0 \mbox{ on } \partial B_i, \mbox{ and } J'(u)u_i=0 \mbox{ for all } 1\leq i \leq k+1\end{array}\right\},
\end{equation*} 
where $ B_i= \{x\in \mathbb{R}^2; r_{i-1}\leq |x|\leq r_{i}\},$ with $r_0=0,  u_i=u$ in $B_i$ and $u_i=0$ outside of $B_i$. Notice that $u=\sum_{i=1}^{k+1}u_i$.
Then, we consider the infimmum level
\begin{equation}\label{ck}
	c_k= \inf_{u\in \mathcal{N}_k} J(u).
\end{equation}

Throughout this paper, we denote by $C$ positive constants. For positives integer $k$, we define
\[\Gamma_k= \{ R_k = (r_1,\cdots, r_k)\in \mathbb{R}^k; 0< r_1<r_2< \cdots < r_k< r_{k+1}= \infty\},\]
and for each $R_k\in \Gamma_k$, we denote
\[B_1 = B^{R_k}_1=\left\{ x\in \mathbb{R}^2; 0\leq |x|\leq r_1\right\}, B_i = B^{R_k}_i= \{x\in \mathbb{R}^2; r_{i-1}\leq |x|\leq r_{i}\}\] for $ i= 2,\cdots,k+1$
and
\[ H_i =  H_i^{R_k}= \{v\in H_0^1(B_i^{R_k}); v(x) = v(|x|), v(x) \equiv 0 \mbox{ if } x\not \in B_i^{R_k}\} \quad \mbox{and}\]
\[\quad \overline{H}_k = \mathcal{H}_k^{R_k}=  H_1^{R_k}\times \cdots \times  H_{k+1}^{R_k} = H_1\times \cdots \times  H_{k+1}.\]

Notice that by {\bf (V)}, $ H_i$  is a Hilbert space with norm
$$\|v\|_i = \left(\int_{B_i}(|\nabla v|^2 + V(|x|)v^2 \, \mathrm{d} x \right)^{1/2}.$$
By Strauss Lemma, we also have that these embeddings
\begin{equation}\label{embedding}
	H_i \hookrightarrow H^1_{rad} (B_i)\hookrightarrow L^q(B_i)\end{equation}are compact for $2 < q < \infty$ and all $i=1,\cdots,k+1$.

Now, we introduce a new functional $E:\mathcal{H}_k \rightarrow \mathbb{R}$ defined by
\[\begin{array}{rl}
	{E}(u_1,\cdots, u_{k+1})=& \displaystyle\sum_{i=1}^{k+1} \left(\dfrac{1}{2}\|u_i\|_i^2- \dfrac{1}{2}\int_{B_i}\int_{B_i}\displaystyle\dfrac{F(u_i)F(u_i)}{|x|^{\beta}|y-x|^{\mu}|y|^{\beta}}\, \mathrm{d} x\mathrm{d} y\right) \\&\displaystyle - \dfrac{1}{2}\sum_{\substack{i,j=1\\ i\neq i}}^{k+1}  \int_{B_i}\int_{B_j}\displaystyle\dfrac{F(u_i)F(u_j)}{|x|^{\beta}|y-x|^{\mu}|y|^{\beta}}\, \mathrm{d} x\mathrm{d} y
\end{array}\] 
which is related to the functional $J$, because
\begin{equation}\label{JE2}
	{E}(u_1, \cdots, u_{k+1})= J\left(\sum_{i=1}^{k+1}u_i\right).
\end{equation}
Note that
\[ \begin{array}{rl}
	\partial_{u_i}{E}(u_1,\cdots, u_{k+1})u_i=& \displaystyle \left(\|u_i\|_i^2 - \displaystyle\int_{B_i}\int_{B_i}\dfrac{F(u_i)f(u_i)u_i}{|x|^{\beta}|y-x|^{\mu}|y|^{\beta}}\,\, \mathrm{d} x\mathrm{d} y\right) \\&\displaystyle - \frac{1}{2
		
	}\sum_{\substack{ i\neq j}}\int_{B_i}\int_{B_j}\dfrac{F(u_j)f(u_i)u_i}{|x|^{\beta}|y-x|^{\mu}|y|^{\beta}}\,\, \mathrm{d} x\mathrm{d} y .
\end{array} \]
Now, we set the Nehari type set
\[\mathcal{N}^{R_k}_k= \{(u_1,\cdots ,u_k)\in \mathcal{H}^{R_k}_k; u_i\neq 0, \partial_{u_i}{E}(u_1, \cdots, u_{k+1})u_i=0, i=1,\cdots , k+1\}\]
and de minimum value
\begin{equation}
	\label{dk2}
	d(R_k): = \inf_{(u_1, \cdots, u_{k+1})\in \mathcal{N}^{R_k}_k} E(u_1, \cdots, u_{k+1}).
\end{equation}
By \eqref{ck}, this, together with \eqref{JE2}, we have
\begin{equation}\label{jd2}
	\inf_{u\in \mathcal{N}_k}J(u)= c_k = \inf_{R_k \in \Gamma_k} d(R_k).
\end{equation}

Notice that if $ (u_1,\cdots, u_k) \in \mathcal{H}_k$ is a critical point of $E$, then each component $u_i$ satisfies
\begin{equation}\label{probi}
	\left\{\begin{array}{rclcl}\displaystyle -\Delta u_i+V(|x|) u_i &=& \displaystyle\sum_{j=1}^k\int_{B_i}\dfrac{F(u_j)}{|x|^{\beta}|y-x|^{\mu}|y|^{\beta}}f(u_i)u_i\,\mathrm{d} y\quad \mbox{in} \quad B_i;\\
		u_i =0 \mbox{ in } (B_i)^c.& &
	\end{array}\right.
\end{equation} 

Now, we remark some estimate of $f$ and $F$, which will helps us throughout this paper. By using assumptions $(f_1)-(f_2)$ and $(SG)$ or $(CG)$, it follows that for each  $q>2$ and $\varepsilon>0$ there exists $C_{\varepsilon}>0$ such that 
\begin{equation}\label{growth}
	f(t)\leq \varepsilon |t|^{\frac{3[4 - (\mu + 2\beta)]}{2}-1}+C_{\varepsilon}(e^{\alpha t^{2}}-1)|t|^{q-1}, \quad \mbox{for all} \hspace{0,2cm} t\in\mathbb{R},
\end{equation} 
which implies that
\begin{equation}\label{growth2}
	F(t)\leq \tilde{\varepsilon}|t|^{\frac{3[4 - (\mu + 2\beta)]}{2}}+\tilde{C_{\varepsilon}}(e^{\alpha t^{2}}-1)|t|^{q}, \quad \mbox{for all} \hspace{0,2cm} t\in\mathbb{R}.
\end{equation}

\begin{obs}
	It is important to stress that if we assume $(SG)$, \eqref{growth} and\eqref{growth2} are avaliable for any $\alpha>0$. If we suppose $(CG)$, we are considering $\alpha\geq4\pi$.  
\end{obs}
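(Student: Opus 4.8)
The plan is to derive \eqref{growth} directly from the hypotheses by splitting the line into a neighborhood of the origin and its complement, and then to obtain \eqref{growth2} by integration; the whole $\alpha$-dependence will be isolated in a single step, namely the behavior at infinity. First I would note that by $(f_1)$ we have $f(t)=0$ for $t\le 0$, while the right-hand sides of \eqref{growth} and \eqref{growth2} are nonnegative; hence both inequalities are trivial for $t\le 0$ and it suffices to treat $t>0$. Write $\rho=\frac{3[4-(\mu+2\beta)]}{2}$, so the two target powers are $t^{\rho-1}$ and $t^{\rho}$; note $\rho>3$ because $\mu+2\beta<2$, which in particular guarantees convergence of the integrals below at the origin.

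Near the origin, $(f_2)$ says $f(t)=o(t^{\rho-1})$ as $t\to 0^{+}$, so for a prescribed $\varepsilon>0$ there is $\delta\in(0,1)$ with $f(t)\le\varepsilon\, t^{\rho-1}$ for $0<t\le\delta$. For the complementary range I would fix $q>2$ and an admissible $\alpha$, and consider the continuous function
\[
g_\alpha(t)=\frac{f(t)}{(e^{\alpha t^{2}}-1)\,t^{q-1}},\qquad t\ge\delta,
\]
whose denominator is strictly positive on $[\delta,\infty)$. Since a continuous function on $[\delta,\infty)$ with a finite limit at infinity is bounded, I would set $C_\varepsilon=\sup_{t\ge\delta}g_\alpha(t)$; the crucial point is that $e^{\alpha t^{2}}-1\sim e^{\alpha t^{2}}$ and $t^{q-1}\to\infty$, so the single hypothesis $f(t)/e^{\alpha t^{2}}\to0$ forces $g_\alpha(t)\to0$ and hence $C_\varepsilon<\infty$. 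Combining the two regimes (both right-hand terms being nonnegative) yields \eqref{growth} for all $t$. Then \eqref{growth2} follows by integrating \eqref{growth} on $[0,t]$: the polynomial term integrates to $\frac{\varepsilon}{\rho}t^{\rho}$, while, because $s\mapsto(e^{\alpha s^{2}}-1)s^{q-1}$ is nondecreasing on $(0,\infty)$, one has $\int_0^t(e^{\alpha s^{2}}-1)s^{q-1}\,\ud s\le(e^{\alpha t^{2}}-1)t^{q}$, giving the exponential term with $\tilde\varepsilon=\varepsilon/\rho$ and $\tilde C_\varepsilon=C_\varepsilon$.

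Everything above is uniform except the limit $f(t)/e^{\alpha t^{2}}\to0$, and this is exactly where the dichotomy enters. Under $(SG)$ this limit holds for \emph{every} $\alpha>0$, so the construction goes through for any $\alpha>0$; this is the flexibility that later permits choosing $\alpha$ small enough to invoke the Trudinger--Moser bound \eqref{TM}, which requires $\alpha<4\pi$. Under $(CG)$ the same limit is guaranteed only for $\alpha>4\pi$, whereas for $\alpha<4\pi$ the ratio $f(t)/e^{\alpha t^{2}}$ tends to $+\infty$, so $g_\alpha$ is unbounded and the inequality itself fails. Finally I would record the elementary monotonicity that the right-hand sides of \eqref{growth}--\eqref{growth2} increase with $\alpha$, since a larger $\alpha$ enlarges $e^{\alpha t^{2}}-1$; thus the set of admissible exponents is upward closed, which together with the previous observation pins the admissible range in the critical case to $\alpha\ge4\pi$, as asserted.

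The main --- and essentially the only --- obstacle is the threshold $\alpha=4\pi$ itself: the limit hypothesis in $(CG)$ is stated strictly for $\alpha>4\pi$, so the clean vanishing $g_{4\pi}(t)\to0$ need not hold, and I would read ``$\alpha\ge4\pi$'' as the closure of the genuinely admissible open range $(4\pi,\infty)$, where every $\alpha>4\pi$ works verbatim and, by the monotonicity just noted, nothing is lost by enlarging $\alpha$ further. I expect no difficulty in the polynomial and integration bookkeeping; the entire content of the remark is the location of the borderline value $4\pi$.
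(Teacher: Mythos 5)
Your derivation is correct and is essentially the standard argument the paper tacitly invokes (the remark is stated without proof there): $(f_2)$ controls $t$ near $0$, continuity together with the vanishing of $f(t)e^{-\alpha t^2}$ at infinity bounds your $g_\alpha$ on $[\delta,\infty)$ and yields \eqref{growth}, and integration with the monotonicity of $s\mapsto (e^{\alpha s^2}-1)s^{q-1}$ gives \eqref{growth2}. Your endpoint caveat is also the right reading of the remark's loose ``$\alpha\geq 4\pi$'': under $(CG)$ alone the inequality is only guaranteed for exponents strictly greater than $4\pi$ (e.g.\ $f(t)\sim t^{q}e^{4\pi t^2}$ satisfies $(CG)$ but makes $g_{4\pi}$ unbounded, so upward-closedness in $\alpha$ pins the admissible set to $(4\pi,\infty)$, not its closure), and the paper itself conforms to this in the proof of Lemma \ref{Neharibounded2}, where the exponential is written as $e^{\alpha 4\pi s^2}$ with $\alpha>1$.
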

From \eqref{growth2}, we have
\begin{equation}\label{estimativeF}
	\left\|F(u) \right\|_{\frac{4}{4-(\mu+2\beta)}}
	\leq \varepsilon\|u\|_{6}^{\frac{3[4 - (\mu + 2\beta)]}{2}}+C\left\{\int_{\mathbb{R}^2}\left[\left(e^{\alpha u^{2}}-1\right)|u|^{q}\right]^{\frac{4}{4-(\mu+2\beta)}}\,\mathrm{d}x\right\}^{\frac{4-(\mu+ 2\beta)}{4}}.
\end{equation} 
Consider $\varrho>0$ and suppose $\|u\|\leq \varrho$. By H\"older inequality we obtain
\begin{equation}\label{rj2}
	\int_{\mathbb{R}^2}\left[\left(e^{\alpha u^{2}}-1\right)|u|^{q}\right]^{\frac{4}{4-(\mu+2\beta)}}\,\mathrm{d}x\leq \left[\int_{\mathbb{R}^2}\left(e^{\frac{8\alpha \|u\|^{2}}{4-(\mu+2\beta)}\left(\frac{u}{\|u\|}\right)^{2}}-1 \right)\,\mathrm{d}x \right]^{\frac{1}{2}}\|u\|_{\frac{8q}{4-(\mu+2\beta)}}^{\frac{4q}{4-(\mu+2\beta)}}.
\end{equation}
If $\varrho\leq \sqrt{4\pi(4-(\mu+2\beta))/8\alpha}$, then we are able to apply Trudinger-Moser inequality. Thus, \eqref{estimativeF}, \eqref{rj2} jointly with Sobolev embedding imply that
\[
\|F(u)\|_{\frac{4}{4-(\mu-2\beta)}}\leq \varepsilon\|u\|^{\frac{3(4-(\mu-2\beta))}{2}}+\tilde{C}\|u\|^{{q}}.
\]
Hence, for all $i,j=1,\cdots , k=1$, it follows from Weighted Hardy-Littlewood-Sobolev  inequality that
\begin{equation}
	\label{estimativeF2}
	\int_{B_i}\int_{B_j}\dfrac{F(u_i)F(u_j)}{|x|^{\beta}|x-y|^{\mu}|y|^{\beta}}\, \mathrm{d} x\mathrm{d} y \leq \varepsilon^2(\|u_i\|_i\|u_j\|_j)^{{\frac{3[4 - (\mu + 2\beta)]}{2}}}+\tilde{C}(\|u_i\|_i\|u_j\|_j)^{{q}}.
\end{equation}
\section{Subcritical case}\label{subcriticalcase}
This section is devoted to the proof of Theorem \ref{teo1}. Here, we assume the nonlinearity has subcritical exponential growth and conditions $(f_1)-(f_4)$. In this context, some proofs are simpler than similar results when considering $(CG)$, as we can take $\alpha > 0$ small enough in \eqref{growth} and \eqref{growth2}, which is not possible in the critical case. 

\subsection{Critical points of the functional $E$ in a Nehari type manifold }\label{CritalE}
Here, we will show some results about critical points of $E$ in $\mathcal{N}_k$.
\begin{lema}\label{multipleneharivetor} Assume $(SG)$, $(f_1)-(f_4)$. Let $q \in \left(2, \dfrac{\theta}{2}+1\right]$, where $\theta$ is given in $(f_3)$. For $(u_1, \dots , u_{k+1}) \in  \mathcal{H}_k$ with $u_i\neq 0$
	for $i = 1, \cdots , k + 1$, there is a unique $(k + 1)$-{tuple} $(t_1, \cdots , t_{k+1})$ of positive
	numbers such that $(t_1^{1/q}u_1, \cdots, t_{k+1}^{1/q}u_{k+1}) \in \mathcal{N}^{R_k}_k$.
	Moreover,  $$(t_1, \cdots , t_{k+1}) \in  (\mathbb{R}>0)^{k+1}:= \{(x_1,\cdots,x_{k+1}) \in \mathbb{R}^{k+1}; x_i>0 \quad\mbox{for} \quad i=1,\cdots, k+1\}$$ is the unique global maximum point of $G:(\mathbb{R}>0)^{k+1} \rightarrow \mathbb{R}$ defined by \linebreak $G(t_1, \dots , t_{k+1}) := E(t_1^{1/q}u_1,\dots ,t_{k+1}^{1/q}u_{k+1})$.
\end{lema}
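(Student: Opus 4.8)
The plan is to transfer the whole statement to the auxiliary function $G$ and reduce it to one assertion: \emph{$G$ has a unique critical point in the open orthant $(\mathbb{R}_{>0})^{k+1}$, and that point is its global maximum.} Writing $v_i = t_i^{1/q}u_i$, the chain rule gives for each $\ell$
\[ \partial_{t_\ell}G(t_1,\dots,t_{k+1}) = \partial_{u_\ell}E(v_1,\dots,v_{k+1})\,\Big(\tfrac{1}{q}t_\ell^{1/q-1}u_\ell\Big) = \frac{1}{q\,t_\ell}\,\partial_{u_\ell}E(v_1,\dots,v_{k+1})\,v_\ell. \]
Since $t_\ell>0$ and $v_\ell=t_\ell^{1/q}u_\ell\neq0$, the tuple $(t_1^{1/q}u_1,\dots,t_{k+1}^{1/q}u_{k+1})$ lies in $\mathcal{N}^{R_k}_k$ precisely when $\nabla G(t)=0$. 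Thus existence of the tuple, its uniqueness, and the maximality claim all follow at once from the reduced assertion.

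Next I would produce a maximizer. From $(f_3)$ one gets the Ambrosetti--Rabinowitz scaling $F(\lambda s)\ge\lambda^{\theta}F(s)$ for $\lambda\ge1$, hence $F(s)\ge c\,s^{\theta}$ for large $s$; since $q\in(2,\theta/2+1]$ forces $q<\theta$, the nonnegative self-interaction integrals involving $F(v_\ell)F(v_\ell)$ grow like $t_\ell^{2\theta/q}$ and dominate the quadratic part $\tfrac12 t_\ell^{2/q}\|u_\ell\|_\ell^2$. As every cross term enters $G$ with a minus sign and is nonnegative, $G(t)\to-\infty$ as $|t|\to\infty$ in the orthant, and since $G$ extends continuously to the closed orthant it attains its supremum there. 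To rule out the boundary I would use $(f_2)$ and \eqref{growth2}: writing $r=\tfrac{3[4-(\mu+2\beta)]}{2}>2$, the subtracted terms and their $t_\ell$-derivatives vanish faster than the quadratic contribution, so
\[ \partial_{t_\ell}G(t) = \frac{1}{q}\,t_\ell^{2/q-1}\|u_\ell\|_\ell^2 - o\big(t_\ell^{2/q-1}\big)\longrightarrow +\infty \quad\text{as } t_\ell\to0^{+}. \]
Hence $G$ is strictly increasing in $t_\ell$ near each face $\{t_\ell=0\}$, no maximizer can lie on the boundary, and the interior maximizer yields the desired element of $\mathcal{N}^{R_k}_k$.

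The heart of the matter, and the step I expect to be hardest, is \emph{uniqueness}. My plan is to show that every critical point of $G$ is a nondegenerate strict local maximum and then to conclude there can be only one. For the first part I would differentiate $\partial_{t_\ell}G$ once more and evaluate at a critical point; the analysis rests on $(f_3)$, where $f'>0$ and $f'(s)s^2>f(s)s$ make $s\mapsto f(s)/s$ increasing and $s\mapsto F(s)/s^{q}$ increasing (the latter since $q<\theta$), while the constraint $q\le\theta/2+1$ is what fixes the correct sign in the comparison of the superquadratic interaction terms against the quadratic part and yields negative definiteness of the full Hessian, including the off-diagonal coupling created by the Stein--Weiss cross terms. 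Once all critical points are strict local maxima, two distinct ones would, by a mountain-pass/deformation argument, force an intermediate critical point of saddle type, i.e.\ one that is not a local maximum, a contradiction; the inward-pointing gradient near each face $\{t_\ell=0\}$ established above keeps the deformation inside the orthant, making this argument legitimate. Therefore the critical point is unique and coincides with the global maximum.

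I would keep in reserve the equivalent monotone formulation, since it clarifies where the difficulty lies. In the variables $s_\ell=t_\ell^{1/q}$ the system reads $\|u_\ell\|_\ell^2=\widehat{Q}_\ell(s)$, where each $\widehat{Q}_\ell$ is strictly increasing in $s_\ell$ (again from $f'(s)s>f(s)$), nondecreasing in the remaining variables, and super-homogeneous, $\widehat{Q}_\ell(\lambda s)\ge\lambda^{\theta}\widehat{Q}_\ell(s)$ for $\lambda\ge1$. The obstruction I anticipate is exactly that the off-diagonal terms couple all components, so the naive comparison at the index maximizing $s_{\ell_0}/\sigma_{\ell_0}$ does not close on its own; this is why I would base the uniqueness on the local-maximum plus topological exclusion argument rather than on a direct monotone comparison.
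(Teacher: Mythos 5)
Your reduction of the statement to ``$G$ has a unique critical point in the open orthant, which is its global maximum'' is exactly the right frame, and your existence part is sound: coercivity of $G$ (the paper derives $F(s)\geq C|s|^{p}$, $p>2$, from $(f_4)$ via \eqref{estfrombelow}, while you derive $F(s)\geq cs^{\theta}$ from the Ambrosetti--Rabinowitz part of $(f_3)$ --- both work), and your boundary analysis via $\partial_{t_\ell}G\to+\infty$ as $t_\ell\to 0^{+}$ is in fact more careful than the paper's, which only remarks that $G\to 0$ near the origin.

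The genuine gap is in the uniqueness step, which is the heart of the lemma. Your plan is: (i) show every critical point of $G$ is a nondegenerate strict local maximum, then (ii) exclude multiplicity by a mountain-pass/deformation argument. But step (i) is precisely the hard computation, and you never carry it out --- you only list ingredients ($f'>0$, $f(s)/s$ increasing, $F(s)/s^{q}$ increasing) and assert that $q\leq\theta/2+1$ ``fixes the correct sign.'' Worse, in your final paragraph you concede that the off-diagonal Stein--Weiss coupling is exactly the obstruction you cannot close by monotone comparison; yet your fallback route (i)--(ii) does not avoid that obstruction, it relocates it into the unproved Hessian claim, so the argument is circular at the decisive point. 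The paper closes this with two concrete facts you are missing. First, for the diagonal entries: differentiating twice produces the combination $(1-q)F(s)+\tfrac12 f(s)s$, and by Ambrosetti--Rabinowitz $f(s)s\geq\theta F(s)$ this is $\geq\bigl(1+\tfrac{\theta}{2}-q\bigr)F(s)\geq 0$ precisely because $q\leq\tfrac{\theta}{2}+1$; together with $f'>0$ this makes each $a_{ii}<0$. Second, for the coupling: the off-diagonal block is
\[
b_{ij}=-\int_{B_i}\int_{B_j}\frac{v_i(x)\,v_j(y)}{|x|^{\beta}|x-y|^{\mu}|y|^{\beta}}\,\mathrm{d}x\,\mathrm{d}y,
\qquad v_i=\frac{f(t_i^{1/q}u_i)\,t_i^{1/q}u_i}{\sqrt{2}\,t_i},
\]
i.e.\ minus a Gram-type matrix with respect to the positive definite Stein--Weiss kernel, hence negative semidefinite (this is \cite[Lemma A.1]{GuiGuo}). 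Crucially, both facts hold at \emph{every} point of $(\mathbb{R}>0)^{k+1}$, not just at critical points, so $G$ is globally strictly concave after the $t^{1/q}$ substitution; uniqueness of the maximum is then immediate, and your entire deformation/topological-exclusion layer becomes unnecessary. Without the kernel-positivity fact in particular, your proposal cannot be completed as written.
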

\begin{proof}
	First, notice that
	\[\begin{aligned}
		G(t_1, \dots , t_{k+1}) :&= E(t_1^{1/q}u_1,\dots ,t_{k+1}^{1/q}u_{k+1})\\
		&= \displaystyle\sum_{i=1}^{k+1} \dfrac{t_i^{2/q}}{2}\|u_i\|_i^2 - \dfrac{1}{2}\sum_{i=1}^{k+1} \sum_{j=1}^{k+1}\int_{B_i}\int_{B_j}\dfrac{F(t_i^{\frac{1}{q}}u_i)F(t_j^{\frac{1}{q}}u_j)}{|x|^{\beta}|x-y|^{\mu}|y|^{\beta}}\, \mathrm{d} x\mathrm{d} y. \,\end{aligned}\]
	By $(f_1)$,  $G(t_1,\dots , t_{k+1})  \rightarrow  0$ as $|(t_1,\dots , t_{k+1})| \rightarrow 0$.	Now, take $(u^0_1, \cdots, u^0_{k+1})\in \overline{H}_{k}\setminus\{0\}$, $u^0_{i}\geq0$ for all $i=1,\cdots, k+1$. As $F(t)\geq 0$, we get 
	\[G(t_1, \dots , t_{k+1})\leq \displaystyle\sum_{i=1}^{k+1} \dfrac{t_i^{2/q}}{2}\|u_i^0\|_i^2 - \displaystyle\sum_{i=1}^{k+1}\int_{B_i}\int_{B_i}\dfrac{F(t_i^{\frac{1}{q}}u^0_i)F(t_i^{\frac{1}{q}}u^0_i)}{|x|^{\beta}|x-y|^{\mu}|y|^{\beta}}\, \mathrm{d} x\mathrm{d} y \,. \]					
	Morerover, by \eqref{estfrombelow},  for $t$ large enough, we have
	\[G(t_1, \dots , t_{k+1})\leq \displaystyle\sum_{i=1}^{k+1} \dfrac{t_i^{2/q}}{2}\|u_i^0\|_i^2 - \displaystyle \dfrac{Ct_i^{\frac{2p}{q}}}{2}\sum_{i=1}^{k+1}\int_{B_i}\int_{B_i}\dfrac{|u^0_i|^{p}|u^0_j|^p}{|x|^{\beta}|x-y|^{\mu}|y|^{\beta}}\, \mathrm{d} x\mathrm{d} y \,. \]	
	As $p>2$,
	$G(t_1,\dots , t_{k+1}) \rightarrow - \infty$ as $|(t_1,\dots , t_{k+1})| \rightarrow + \infty$. Then $G$ possesses a global maximum point $(\bar{t}_1, \dots, \bar{t}_{k+1}) \in  (\mathbb{R}>0)^{k+1}$. Furthermore, this global maximum point is unique. In fact, a direct calculation gives that
	\[\begin{aligned}
		\left ( \dfrac{\partial^{2}G}{
			\partial t_i \partial t_j}
		\right )_{(k+1)×(k+1)} = (a_{ij})_{(k+1)×(k+1)} + (b_{ij})_{(k+1)×(k+1)} 
	\end{aligned},\]
	where
	\[ \scriptsize	a_{ij} = \left\{ \begin{array}{lll}
		\displaystyle\dfrac{(2-q)}{q^2}t_i^{\frac{2}{q}-2}\|u_i\|_i^2 \	{-\frac{1}{q^2t^2_i} \int_{B_i}\int_{B_i}\dfrac{f'(t_i^{\frac{1}{q}}u_i(x)t_i^{\frac{2}{q}}u^2_i(x)F(t_i^\frac{1}{q}u_i(y))}{|x|^{\beta}|x-y|^{\mu}|y|^{\beta}}\, \mathrm{d} x\mathrm{d} y}\\\\
		\displaystyle-\frac{1}{q^2t^2_i} \int_{B_i}\int_{B_i}\dfrac{\left[\left(1-q\right) F(t_i^{\frac{1}{q}}u_i(y)) + \dfrac{f(t_i^{\frac{1}{q}}u_i(y))t_i^{\frac{1}{q}}u_i(y)}{2}\right] f(t_i^{\frac{1}{q}}u_i(x))t_i^{\frac{1}{q}}u_i(x) }{|x|^{\beta}|x-y|^{\mu}|y|^{\beta}}\, \mathrm{d} x\mathrm{d} y&\\\\\displaystyle
		- \dfrac{1}{2q^2t_i^2}\sum^{k+1}_{\substack{l=1\\ l\neq i}}\int_{B_i}\int_{B_l}\dfrac{F(t_l^{\frac{1}{q}}u_i(y)) f'(t_i^{\frac{1}{q}}u_i(x))t_i^{\frac{2}{q}}u^2_i(x) }{|x|^{\beta}|x-y|^{\mu}|y|^{\beta}}\, \mathrm{d} x\mathrm{d} y, \quad \mbox{if}\quad i = j; \\
		0,\quad \mbox{ if }\quad i \neq j
	\end{array}\right.\]

	and
	\[ \scriptsize	b_{ij}=\left\{\begin{aligned}
		-\frac{1}{2t_it_j} \int_{B_i}\int_{B_j}\dfrac{f(t_i^{\frac{1}{q}}u_i(x))t_i^{\frac{1}{q}}u_i(x)f(t_j^{\frac{1}{q}}u_j(y))t_j^{\frac{1}{q}}u_j(y)}{|x|^{\beta}|x-y|^{\mu}|y|^{\beta}}\, \mathrm{d} x\mathrm{d} y \,\, \text{for all}\, i, j = 1, \dots, k+1.\\
	\end{aligned}\right.\]
	Notice that for all $i=1,\cdots,k+1$, $a_{ii}<0$, because $2<q\leq\dfrac{\theta}{2}+1$, and, $F(t),f(t),f'(t)$ are positive for $t>0$, due to  $(f_3)$. By $(f_3)$, we also conclude that $b_{i,j}<0$ for all $i,j=1,\cdots,k+1$ and $t>0$. Thus, from \cite[Lemma A.1]{GuiGuo}, it follows that $(b_{ij})_{(k+1)×(k+1)}$ is negative definite if we set $v_i = \dfrac{f(t_i^{\frac{1}{q}}u_i)t_i^{\frac{1}{q}}u_i}{\sqrt{2}t_i}$. So, the Hessian matrix
	\[\begin{aligned}
		\left ( \dfrac{\partial^{2}G}{\partial t_i \partial t_j}
		\right )_{(k+1)×(k+1)}  \in (\mathbb{R}>0)^{k+1}
	\end{aligned} \]  is negative definite, and the function $G$ is strictly concave, which implies the uniqueness once $f\in C^1(\mathbb{R}), \mathbb{R}^+$. Thus, $(\bar{t}_1, \dots, \bar{t}_{k+1})$   is the unique global maximum point of $G \,\, \text{in}\, \, {(\mathbb{R}>0)^{k+1}}$.
This implies $\partial_{u_i} E(\bar{t}_1^{1/q}u_1,\dots ,\bar{t}_{k+1}^{1/q}u_{k+1})u_i = 0 $ and then $(\bar{t}_1^{1/q}u_1,\dots ,\bar{t}_{k+1}^{1/q})  \in \mathcal{N}^{R_k}_k$. Hence, this lemma follows.
\end{proof}
\begin{obs}\label{obs3}
	We highlight that in Lemma \ref{multipleneharivetor}, it is not necessary to assume that $f'(s)s^2 > f(s)s$. To prove this result, the assumption is that the derivative of the nonlinearity is positive.
\end{obs}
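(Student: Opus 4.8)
The plan is to reduce everything to a single smooth real function, namely $G(t_1,\dots,t_{k+1})=E(t_1^{1/q}u_1,\dots,t_{k+1}^{1/q}u_{k+1})$ on the open orthant $(\mathbb{R}>0)^{k+1}$, and to show that $G$ is strictly concave with an interior global maximum; uniqueness of the critical point of $G$ then yields uniqueness of the tuple. I would stress at the outset why the rescaling is by $t_i^{1/q}$ rather than by $t_i$: it turns the kinetic term into $\tfrac{t_i^{2/q}}{2}\|u_i\|_i^2$ with exponent $2/q<1$ (this is where $q>2$ enters), so that $t_i\mapsto t_i^{2/q}$ is strictly concave, which is the structural feature that makes the whole concavity argument run.

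First I would pin down the boundary behaviour of $G$. Near the origin, $(f_1)$–$(f_2)$ make the double Stein--Weiss term of strictly higher order than the kinetic term, so $G(t)\to 0$ as $|t|\to 0$ and, crucially, $G(t)>0$ for $|t|$ small. At infinity I would invoke the lower bound $F(s)>C|s|^{p}$ with $p>2$ from \eqref{estfrombelow} (a consequence of $(f_4)$): the self-interaction integrals then dominate and grow like $t_i^{2p/q}$ against the $t_i^{2/q}$ growth of the positive kinetic part, while the cross terms are subtracted, whence $G(t)\to-\infty$ as $|t|\to+\infty$. Coercivity from above plus continuity gives a global maximizer; to place it in the open orthant rather than on a boundary face, I would note that if some coordinate vanishes, perturbing it to a small positive value raises $G$ by $\sim t_i^{2/q}\|u_i\|_i^2$, which beats the higher-order interaction loss, so no face can carry the maximum.

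The heart of the matter, and the main obstacle, is strict concavity, obtained by showing the Hessian $\big(\partial^2 G/\partial t_i\partial t_j\big)$ is negative definite on $(\mathbb{R}>0)^{k+1}$; I would split it as $(a_{ij})+(b_{ij})$ as displayed. The matrix $(a_{ij})$ is diagonal, so it suffices to sign $a_{ii}<0$: the term $\tfrac{2-q}{q^2}t_i^{2/q-2}\|u_i\|_i^2$ is negative since $q>2$, the $f'$-terms are negative because $f'>0$ by $(f_3)$, and the bracketed quantity $(1-q)F+\tfrac12 fs$ is positive precisely because $q\le\tfrac{\theta}{2}+1$ combines with $f(s)s>\theta F(s)$ to give $\tfrac12 f(s)s-(q-1)F(s)>\big(\tfrac{\theta}{2}-(q-1)\big)F(s)\ge 0$ — this is exactly where the upper restriction on $q$ in the hypothesis is consumed. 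For $(b_{ij})$, setting $v_i=\dfrac{f(t_i^{1/q}u_i)\,t_i^{1/q}u_i}{\sqrt{2}\,t_i}\ge 0$ rewrites $b_{ij}=-\iint \dfrac{v_i(x)v_j(y)}{|x|^{\beta}|x-y|^{\mu}|y|^{\beta}}\,\mathrm{d}x\,\mathrm{d}y$, so $(b_{ij})$ is minus the Gram matrix of the $v_i$ in the positive-definite Stein--Weiss bilinear form; since the $v_i$ are supported on the distinct $B_i$ and are nonzero, they are linearly independent, and \cite[Lemma A.1]{GuiGuo} makes $(b_{ij})$ negative definite (finiteness of all these integrals being guaranteed by \eqref{growth}--\eqref{growth2} and the Trudinger--Moser inequality). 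A sum of negative definite matrices being negative definite, $G$ is strictly concave.

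Finally, strict concavity on the convex open set $(\mathbb{R}>0)^{k+1}$ forces the interior maximizer $(\bar t_1,\dots,\bar t_{k+1})$ to be the unique critical point, hence the unique global maximum. There $\nabla G=0$, and since $\partial_{t_i}G$ is a strictly positive multiple of $\partial_{u_i}E(\bar t_1^{1/q}u_1,\dots,\bar t_{k+1}^{1/q}u_{k+1})u_i$, each equation $\partial_{t_i}G(\bar t)=0$ is equivalent to $\partial_{u_i}E(\dots)u_i=0$, placing the rescaled tuple in $\mathcal{N}^{R_k}_k$. I expect the delicate bookkeeping to be twofold: signing $a_{ii}$ under the joint constraints $2<q\le\tfrac{\theta}{2}+1$ and $(f_3)$, and verifying that the Stein--Weiss form is genuinely positive definite (not merely semidefinite) on the $v_i$, since it is precisely that strictness which upgrades concavity to strict concavity and thereby delivers uniqueness.
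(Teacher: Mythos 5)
Your proposal is correct and takes essentially the same route as the paper: the remark is justified exactly by the hypothesis bookkeeping in the proof of Lemma \ref{multipleneharivetor}, where the diagonal entries $a_{ii}$ are signed using only $q>2$, $f'>0$, and $\tfrac12 f(s)s-(q-1)F(s)>\bigl(\tfrac{\theta}{2}-(q-1)\bigr)F(s)\ge 0$ (the Ambrosetti--Rabinowitz part of $(f_3)$ combined with $q\le\tfrac{\theta}{2}+1$), while $(b_{ij})$ is handled by the Gram-matrix argument of \cite[Lemma A.1]{GuiGuo} with $v_i=\dfrac{f(t_i^{1/q}u_i)\,t_i^{1/q}u_i}{\sqrt{2}\,t_i}$. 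The stronger inequality $f'(s)s^2>f(s)s$ is indeed never invoked in that lemma; as you note implicitly, it is consumed only later, in Lemma \ref{manifold}, to sign the entries $N_{ii}$.
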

\begin{lema}\label{manifold} Assume  $(f_1)-(f_3)$, and  $(SG)$, then $\mathcal{N}^{R_k}_k$ is a differentiable manifold in $\mathcal{H}_k$.
	Moreover, all critical points of the restriction $E_{\mathcal{N}_k}$
	of $E$ to $\mathcal{N}^{R_k}_k$ are critical points of $E$ with no zero component.
\end{lema}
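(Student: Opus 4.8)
The plan is to realise $\mathcal{N}^{R_k}_k$ as the regular zero-level set of a $C^1$ map and then run a natural-constraint (Lagrange multiplier) argument. Write $\Phi_i(u):=\partial_{u_i}E(u_1,\dots,u_{k+1})u_i$ for $i=1,\dots,k+1$ and $\Phi:=(\Phi_1,\dots,\Phi_{k+1})\colon \mathcal{O}\to\mathbb{R}^{k+1}$, where $\mathcal{O}=\{(u_1,\dots,u_{k+1})\in\mathcal{H}_k:\ u_i\neq0,\ i=1,\dots,k+1\}$ is open in $\mathcal{H}_k$. Since $f\in C^1$ by $(f_1)$, and the Stein--Weiss and Trudinger--Moser inequalities together with the estimates \eqref{growth}--\eqref{estimativeF2} justify differentiation under the integral sign, one has $E\in C^2(\mathcal{H}_k,\mathbb{R})$, hence $\Phi\in C^1(\mathcal{O},\mathbb{R}^{k+1})$. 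By definition $\mathcal{N}^{R_k}_k=\Phi^{-1}(0)$, so it suffices to show that $0$ is a regular value of $\Phi$, i.e. that $D\Phi(u)\colon\mathcal{H}_k\to\mathbb{R}^{k+1}$ is onto for every $u\in\mathcal{N}^{R_k}_k$.

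The heart of the argument is the nondegeneracy of the $(k+1)\times(k+1)$ matrix $M(u):=\big(\partial_{u_j}\Phi_i(u)[u_j]\big)_{i,j}$ at points of $\mathcal{N}^{R_k}_k$, which I would deduce from Lemma~\ref{multipleneharivetor}. Fix any $q\in(2,\tfrac{\theta}{2}+1]$ (the interval is nonempty since $\theta>2$) and, for a given $u\in\mathcal{N}^{R_k}_k$, set $G(t_1,\dots,t_{k+1})=E(t_1^{1/q}u_1,\dots,t_{k+1}^{1/q}u_{k+1})$. A direct chain-rule computation gives $\Phi_i(t_1^{1/q}u_1,\dots,t_{k+1}^{1/q}u_{k+1})=q\,t_i\,\partial_{t_i}G(t)$; differentiating once more and using $\partial_{t_i}G(1,\dots,1)=0$ (which holds precisely because $u\in\mathcal{N}^{R_k}_k$) yields $\partial_{u_j}\Phi_i(u)[u_j]=q^2\,\partial_{t_i}\partial_{t_j}G(1,\dots,1)$. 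Hence $M(u)=q^2\,\mathrm{Hess}\,G(1,\dots,1)$, which is negative definite by the Hessian computation already carried out in the proof of Lemma~\ref{multipleneharivetor} (strict concavity of $G$ under $(f_3)$, with the off-diagonal nonlocal couplings handled via \cite[Lemma A.1]{GuiGuo}). In particular $M(u)$ is invertible.

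With $M(u)$ invertible the manifold structure follows at once: applying $D\Phi(u)$ to the admissible direction $(0,\dots,u_j,\dots,0)$, with $u_j$ in the $j$-th slot, produces exactly the $j$-th column of $M(u)$, so the image of $D\Phi(u)$ already contains $k+1$ linearly independent vectors and $D\Phi(u)$ is onto. Thus $0$ is a regular value and $\mathcal{N}^{R_k}_k=\Phi^{-1}(0)$ is a $C^1$ submanifold of $\mathcal{O}$ of codimension $k+1$. For the second assertion, let $u=(u_1,\dots,u_{k+1})\in\mathcal{N}^{R_k}_k$ be a critical point of $E|_{\mathcal{N}^{R_k}_k}$. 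Since $0$ is a regular value, the Lagrange multiplier rule gives $\lambda_1,\dots,\lambda_{k+1}\in\mathbb{R}$ with $E'(u)=\sum_{i=1}^{k+1}\lambda_i\,\Phi_i'(u)$ in $\mathcal{H}_k^{*}$. Testing against $(0,\dots,u_j,\dots,0)$ and using $E'(u)[(0,\dots,u_j,\dots,0)]=\partial_{u_j}E(u)u_j=\Phi_j(u)=0$, we obtain $\sum_{i}\lambda_i\,\partial_{u_j}\Phi_i(u)[u_j]=0$ for every $j$, i.e. $M(u)^{\mathsf T}\lambda=0$; invertibility of $M(u)$ forces $\lambda=0$, whence $E'(u)=0$. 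Thus $u$ is a critical point of $E$, with no vanishing component because membership in $\mathcal{N}^{R_k}_k$ requires $u_i\neq0$ for all $i$.

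The main obstacle is the second paragraph: establishing that $M(u)$ is genuinely (negative) definite rather than merely nonzero. This is exactly where condition $(f_3)$ (through $f'(s)s^2>f(s)s>\theta F(s)>0$) and the delicate control of the mixed nonlocal off-diagonal terms are indispensable, and where I would lean most heavily on transporting the Hessian estimate from Lemma~\ref{multipleneharivetor}; everything else (the $C^1$ regularity of $\Phi$ and the surjectivity/Lagrange bookkeeping) is then routine.
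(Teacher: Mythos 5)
Your proof is correct and follows the paper's overall skeleton — realize $\mathcal{N}^{R_k}_k$ as the zero set of $\Phi=(\Phi_1,\dots,\Phi_{k+1})$ (the paper's map $I$) on the open set of tuples with nonzero components, show the $(k+1)\times(k+1)$ matrix of derivatives in the directions $(0,\dots,u_j,\dots,0)$ is nonsingular, and then kill the Lagrange multipliers by testing against these same directions — but you handle the key nondegeneracy step by a genuinely different route. The paper recomputes the entries $N_{ij}=\partial_{u_i}I_j(u)u_i$ explicitly, uses the constraint $I_i(u)=0$ together with $(f_3)$ to show every entry is negative, and then invokes the auxiliary linear-algebra result \cite[Lemma A.3]{nodal} to conclude $\det N\neq 0$. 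You instead exploit the scaling identity $\Phi_i(t_1^{1/q}u_1,\dots,t_{k+1}^{1/q}u_{k+1})=q\,t_i\,\partial_{t_i}G(t)$, which at a point of $\mathcal{N}^{R_k}_k$ (where $\partial_{t_i}G(1,\dots,1)=\tfrac1q\Phi_i(u)=0$) gives $M(u)=q^2\,\mathrm{Hess}\,G(1,\dots,1)$, and then import the negative definiteness of $\mathrm{Hess}\,G$ established in the proof of Lemma \ref{multipleneharivetor}; I checked the chain-rule computation and it is right, and it is arguably cleaner, since it avoids both the entrywise recomputation and the external matrix lemma while making transparent exactly where $(f_3)$ and $q\in\left(2,\tfrac{\theta}{2}+1\right]$ enter. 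Two points you should make explicit: (i) Lemma \ref{multipleneharivetor} as stated assumes $(f_4)$, which Lemma \ref{manifold} does not; you are entitled to cite only the Hessian/concavity computation from its proof, which uses just $(f_1)$--$(f_3)$, but this needs to be said, since the existence-of-maximum half of that proof does use $(f_4)$ through \eqref{estfrombelow}; (ii) the paper's proof additionally derives the uniform lower bound \eqref{imersao}, giving $\|u_i\|_i\geq\sigma_i>0$ on $\mathcal{N}^{R_k}_k$, which your argument omits — this is not needed for the literal statement (membership in $\mathcal{N}^{R_k}_k$ already forces $u_i\neq0$, as you note), but it is what the paper later leans on to rule out component sequences degenerating to zero, so your proof is complete for this lemma while delivering slightly less for downstream use.
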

\begin{proof}
	We show that $\mathcal{N}^{R_k}_k$ is a manifold first. We may write
	$$\mathcal{N}^{R_k}_k = \{(u_1,\cdots, u_{k+1})  \in H_k \mid u_i \neq 0 , I(u_1,\cdots, u_{k+1}) = 0\},$$
	where  $I = I(u_1,\cdots, u_{k+1}): \mathcal{H}_k \rightarrow \mathbb{R}^{k+1}$ is given by	
	\[ \begin{array}{rl}
		I_i(u_1,\cdots, u_{k+1})=& \displaystyle\|u_i\|_i^2 - \int_{B_i}\int_{B_i}\dfrac{F(u_i)f(u_i)u_i}{|x|^{\beta}|x-y|^{\mu}|y|^{\beta}}\, \mathrm{d} x\mathrm{d} y \\&\displaystyle - \frac{1}{2}\sum_{\substack{ i\neq j}}^{k+1}\int_{B_i}\int_{B_j}\dfrac{F(u_j)f(u_i)u_i}{|x|^{\beta}|x-y|^{\mu}|y|^{\beta}}\,  \mathrm{d} x\mathrm{d} y,
	\end{array}\]	for $i = 1, \cdots, k+1$. In order to prove that $\mathcal{N}^{R_k}_k$ is a differentiable manifold in $\mathcal{H}_k$, it suffices to check that the matrix
	\[\begin{aligned}
		N := (N_{ij}) =
		\left (\partial_{u_i}I_j(u_1,\cdots, u_{k+1}), u_i) \right)_{i,j=1,\cdots ,k+1}
	\end{aligned}\]
	is nonsingular at each point $(u_1,\cdots, u_{k+1}) \in \mathcal{N}^{R_k}_k$, since it implies that $0$ is a
	regular value of $I$. In fact, by direct computation, we have
	\[ \begin{array}{cc}
		N_{ij} =\displaystyle - \frac{1}{2} \sum_{\substack{ i\neq j}}^{k+1}\left(\int_{B_i}\int_{B_j}\dfrac{f(u_j(y))u_j(y)f(u_i(x))u_i(x)}{|x|^{\beta}|x-y|^{\mu}|y|^{\beta}}\, \mathrm{d} x\mathrm{d} y  \right)
	\end{array} \]	
	for all $i\neq j$ and $i,j=1,\cdots k+1$, and 
	\[ \scriptsize \begin{array}{rl}
		N_{ii}=& \displaystyle 2\|u_i\|_i^2 -  \int_{B_i}\int_{B_i}\dfrac{f(u_i(y))u_i(y)f(u_i(x))u_i(x)}{|x|^{\beta}|x-y|^{\mu}|y|^{\beta}}\, \mathrm{d} x\mathrm{d} y - \int_{B_i}\int_{B_i}\dfrac{F(u_i(y))f'(u_i(x)u_i^2(x)}{|x|^{\beta}|x-y|^{\mu}|y|^{\beta}}\, \mathrm{d} x\mathrm{d} y \\
		&\displaystyle -\int_{B_i}\int_{B_i}\dfrac{F(u_i(y))f(u_i(x))u_i(x)}{|x|^{\beta}|x-y|^{\mu}|y|^{\beta}}\, \mathrm{d} x\mathrm{d} y \\
		&\displaystyle - \frac{1}{2} \sum_{\substack{ i\neq j}}^{k+1}\left(\int_{B_i}\int_{B_j}\dfrac{F(u_j(y))f(u_i(x))u_i(x)}{|x|^{\beta}|x-y|^{\mu}|y|^{\beta}}\, \mathrm{d} x\mathrm{d} y  +\int_{B_i}\int_{B_j}\dfrac{F(u_j(y))f'(u_i(x))u^2_i(x)}{|x|^{\beta}|x-y|^{\mu}|y|^{\beta}}\, \mathrm{d} x\mathrm{d} y\right).
	\end{array}\] for all $i=1,\cdots, k+1$. As $F,f\geq 0$, then $N_{ij}<0$ for $(u_1,\cdots,u_k)\in \mathcal{N}^{R_k}_k$ and $i\neq j$. Moreover,	\[  \scriptsize
	\begin{array}{cl}
		N_{ii}= & \displaystyle 2\left(\int_{B_i}\int_{B_i}\dfrac{F(u_i(y))f(u_i(x))u_i(x)}{|x|^{\beta}|x-y|^{\mu}|y|^{\beta}}\, \mathrm{d} x\mathrm{d} y \displaystyle +\dfrac{1}{2} \sum_{\substack{ i\neq j}}^{k+1}\int_{B_i}\int_{B_j}\dfrac{F(u_j(y))f(u_i(x))u_i(x)}{|x|^{\beta}|x-y|^{\mu}|y|^{\beta}}\,  \mathrm{d} x\mathrm{d} y\right)\\
		&\displaystyle -  \int_{B_i}\int_{B_i}\dfrac{f(u_i(y))u_i(y)f(u_i(x))u_i(x)}{|x|^{\beta}|x-y|^{\mu}|y|^{\beta}}\, \mathrm{d} x\mathrm{d} y  \displaystyle- \int_{B_i}\int_{B_i}\dfrac{F(u_i(y))f'(u_i(x)u_i^2(x)}{|x|^{\beta}|x-y|^{\mu}|y|^{\beta}}\, \mathrm{d} x\mathrm{d} y\\
		&\displaystyle -\int_{B_i}\int_{B_i}\dfrac{F(u_i(y))f(u_i(x))u_i(x)}{|x|^{\beta}|x-y|^{\mu}|y|^{\beta}}\, \mathrm{d} x\mathrm{d} y \\
		&\displaystyle - \frac{1}{2} \sum_{\substack{ i\neq j}}^{k+1}\left(\int_{B_i}\int_{B_j}\dfrac{F(u_j(y))f(u_i(x))u_i(x)}{|x|^{\beta}|x-y|^{\mu}|y|^{\beta}}\, \mathrm{d} x\mathrm{d} y  +\int_{B_i}\int_{B_j}\dfrac{F(u_j(y))f'(u_i(x))u^2_i(x)}{|x|^{\beta}|x-y|^{\mu}|y|^{\beta}}\, \mathrm{d} x\mathrm{d} y\right)\\
		& =\displaystyle \int_{B_i}\int_{B_i}\dfrac{[F(u_i(y))-f(u_i(y))u_i(y)]f(u_i(x))u_i(x)}{|x|^{\beta}|x-y|^{\mu}|y|^{\beta}}\, \mathrm{d} x\mathrm{d} y\\
		&\displaystyle -\int_{B_i}\int_{B_i}\dfrac{F(u_i(y))u_i(y)f'(u_i(x))u^2_i(x)}{|x|^{\beta}|x-y|^{\mu}|y|^{\beta}}\, \mathrm{d} x\mathrm{d} y\\
		& +\displaystyle\frac{1}{2} \sum_{\substack{ i\neq j}}^{k+1}\int_{B_i}\int_{B_j}\dfrac{F(u_j(y))[f(u_i(x))u_i(x)- f'(u_i(x))u^2_i(x)]}{|x|^{\beta}|x-y|^{\mu}|y|^{\beta}}\, \mathrm{d} x\mathrm{d} y  \\ 
		&\leq
		\displaystyle
		\int_{B_i}\int_{B_i}\dfrac{f(u_i(y))u_i(y)f(u_i(x))u_i(x)}{|x|^{\beta}|x-y|^{\mu}|y|^{\beta}}\, \mathrm{d} x\mathrm{d} y\\ 
		&
		+\displaystyle\frac{1}{2} \sum_{\substack{ i\neq j}}^{k+1}\int_{B_i}\int_{B_j}\dfrac{F(u_j(y))[f(u_i(x))u_i(x)- f'(u_i(x))u^2_i(x)]}{|x|^{\beta}|x-y|^{\mu}|y|^{\beta}}\, \mathrm{d} x\mathrm{d} y  \\
	\end{array} \]
	for $i = 1, \cdots, k+1$. By $(f_3)$, as , $f(t),F(t)>0$, and $f(t)t< f'(t)t^2$, for $t>0$, we have $N_{ii}<0$.	By \cite[Lemma A.3.]{nodal}, we may verify as the proof
	of lemma  that $ \det N \neq 0$  at each point of $\mathcal{N}^{R_k}_k$. So $\mathcal{N}^{R_k}_k$ is a differentiable
	manifold in $\mathcal{H}_k$. Next, we verify that any critical point $(u_1, \cdots , u_{k+1})$ of $E\mid_{\mathcal{N}^{R_k}_k}$
	is a critical point of $E$. Indeed, if $(u_1, \cdots , u_{k+1})$ is a critical point of $E\mid_{\mathcal{N}^{R_k}_k}$
	, then there
	are Lagrange multipliers $\lambda_1, \cdots , \lambda_{k+1}$ such that
	\begin{equation}\label{multiplicadordelagrange2}\begin{aligned}
			\lambda_1I^{'}_1 (u_1, \cdots , u_{k+1}) + \cdots + \lambda_{k+1}I^{'}_{k+1}(u_1, \cdots , u_{k+1}) = E^{'}(u_1, \cdots , u_{k+1}).
	\end{aligned}\end{equation}
	The values of the operator identity (\ref{multiplicadordelagrange2}) at points
	$$(u_1, 0, \cdots , 0),(0, u_2, 0, \cdots , 0), \cdots , (0, \cdots , 0, u_{k+1})$$
	form a system
	
	\[\begin{aligned}
		N \left (\begin{matrix}
			\lambda_1\\
			\lambda_2\\
			\vdots\\
			\lambda_{k+1}
		\end{matrix}\right) = \left (\begin{matrix}
			0\\
			0\\
			\vdots\\
			0
		\end{matrix}\right).
	\end{aligned}\]
	Since the matrix $N$ is nonsingular at each point of $\mathcal{N}^{R_k}_k$, $\lambda_1, \cdots, \lambda_{k+1}$ are all zero and $(u_1, \cdots , u_{k+1})$ is a critical point of $E$.
	Finally, for any $(u_1, \cdots , u_{k+1}) \in {\mathcal{N}}^{R_k}_k$.  Consequently,
	\[\begin{array}{rl} \|u_i\|_i^2& =\displaystyle \dfrac{1}{2}\int_{B_i}\int_{B_i}\dfrac{F(u_i)f(u_i)u_i}{|x|^{\beta}|x-y|^{\mu}|y|^{\beta}}\, \mathrm{d} x\mathrm{d} y \\&\displaystyle + \frac{1}{2}\sum_{\substack{ j=1}}^{k+1}\int_{B_i}\int_{B_j}\dfrac{F(u_j)f(u_i)u_i}{|x|^{\beta}|x-y|^{\mu}|y|^{\beta}}\,  \mathrm{d} x\mathrm{d} y\\
		&=\displaystyle \dfrac{1}{2}\int_{B_i}\int_{B_i}\dfrac{F(u_i)f(u_i)u_i}{|x|^{\beta}|x-y|^{\mu}|y|^{\beta}}\, \mathrm{d} x\mathrm{d} y \\&\displaystyle + \frac{1}{2}\int_{\mathbb{R}^2}\int_{B_i}\dfrac{F(u)f(u_i)u_i}{|x|^{\beta}|x-y|^{\mu}|y|^{\beta}}\,  \mathrm{d} x\mathrm{d} y\\
		&\displaystyle \leq 
		
		C\int_{\mathbb{R}^2}\int_{B_i}\dfrac{F(u)f(u_i)u_i}{|x|^{\beta}|x-y|^{\mu}|y|^{\beta}}\,  \mathrm{d} x\mathrm{d} y.\\
		
	\end{array}\] 
	So,  from  Lemma \ref{WHLS} together with \eqref{growth},  \eqref{growth2}, \eqref{exponencialfinita}  and the Sobolev embedding theorem, using similar arguments to those used to prove \eqref{estimativeF2}, we obtain
	\begin{equation}\label{imersao}\|u_i\|_i^2\leq C(\|u\|\|u_i\|_i)^{\frac{3(4-(\mu+2\beta))}{2}}+C(\|u\|\|u_i\|_i)^{{q}}.\end{equation}
	Since $q,\frac{3(4-(\mu+2\beta))}{2} >2$, there exists a constant $\sigma_i > 0$ such that $\|u_i\|_i \geq \sigma_i $, $i > 0, i = 1, \cdots , k+1$. Thus, critical points of $E$ in $\mathcal{N}^{R_k}_k$
	cannot have any zero component. To conclude, we note that all sequences $(u_1^n, \cdots, u_{k+1}^n)$ converging to an element $(u_1^0, \cdots, u_{k+1}^0)$ of $\mathcal{N}^{R_k}_k$ cannot have any component sequence converging to zero, as $u_i^0$ is bounded away from zero. Thus, the proof is complete.
\end{proof}

	\subsection{Minimizing sequence of $E\mid_{\mathcal{N}_k}$}
	Here, initially, we  present some convergence results for minimizing sequence of $E\mid_{\mathcal{N}_k}$.

	\begin{lema}
		Let $N =2$, $0 < \mu+2\beta < 2$ and $q\in[2, \infty)$. If ${(u^n_1 , \cdots , u^n_{k+1})}$ is a bounded sequence in $L^{p}(B_1)\times \cdots \times L^{p}(B_{k+1}) $ such that
		if ${(u^n_1 ,\cdots , u^n_{k+1})} \rightarrow {(u_1^0 , \cdots , u_{k+1}^0)}$ almost everywhere in $ B_1\times \cdots \times B_{k+1}$ as $n \rightarrow \infty$, then ${(u^n_1 , \cdots , u^n_{k+1})}  \rightharpoonup\ {(u_1 , \cdots , u_{k+1})}$ weakly in $L^q(B_1)\times \cdots \times L^q(B_{k+1}) $.
	\end{lema}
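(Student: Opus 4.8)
The statement is the classical fact that, in a reflexive Lebesgue space, almost-everywhere convergence together with norm-boundedness forces weak convergence to the almost-everywhere limit; here it is asserted for each factor $L^q(B_i)$ and then for the finite product. Since weak convergence in a finite product is precisely componentwise weak convergence, the plan is to reduce immediately to a single factor: fix $i$ and work with the scalar sequence $(u_i^n)_n$, bounded in $L^q(B_i)$ and satisfying $u_i^n \to u_i^0$ a.e.\ in $B_i$; the goal is $u_i^n \rightharpoonup u_i^0$ weakly in $L^q(B_i)$. (I read the exponent of boundedness as $q$ and the a.e.\ limit $u_i^0$ as the asserted weak limit, which is anyway forced by the argument.)

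First I would verify that the candidate limit lies in the space: applying Fatou's lemma to $|u_i^n|^q \to |u_i^0|^q$ a.e.\ gives $\|u_i^0\|_{L^q(B_i)} \le \liminf_n \|u_i^n\|_{L^q(B_i)} < \infty$, so $u_i^0 \in L^q(B_i)$. Next, since $q \in [2,\infty)$ makes $L^q(B_i)$ reflexive, the bounded sequence $(u_i^n)_n$ is weakly sequentially precompact; it therefore suffices to prove that every weakly convergent subsequence has limit $u_i^0$, for then a routine subsequence argument upgrades this to weak convergence of the full sequence.

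The heart of the matter is identifying the weak limit. Suppose $u_i^{n_\ell} \rightharpoonup v$ in $L^q(B_i)$ along some subsequence. I would test against functions $\varphi \in L^{q'}(B_i)$, with $q' = q/(q-1) \in (1,2]$, that are bounded and supported on a set $A$ of finite measure. For such $\varphi$ the sequence $(u_i^n \varphi)_n$ is uniformly integrable (a Hölder estimate $\int_E |u_i^n \varphi| \le \|\varphi\|_\infty \,\|u_i^n\|_{L^q}\,|E|^{1/q'}$ gives equi-integrability on the finite-measure set $A$), and together with the pointwise convergence $u_i^n \varphi \to u_i^0 \varphi$ a.e.\ on $A$, Vitali's convergence theorem yields $\int_{B_i} u_i^n \varphi \to \int_{B_i} u_i^0 \varphi$. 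On the other hand, weak convergence gives $\int_{B_i} u_i^{n_\ell}\varphi \to \int_{B_i} v\,\varphi$, whence $\int_{B_i} v\,\varphi = \int_{B_i} u_i^0 \varphi$ for every such $\varphi$. Because bounded functions with finite-measure support are dense in $L^{q'}(B_i)$ (as $q' < \infty$), this identity forces $v = u_i^0$ a.e. Thus every weak subsequential limit equals $u_i^0$, which closes the single-factor claim and, applied for $i = 1,\dots,k+1$, the product statement.

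The step that requires genuine care is the Vitali/Egorov localization: the outermost region $B_{k+1}$ is the complement of a disk and hence has infinite measure, so uniform integrability cannot be run globally. I would circumvent this exactly as above, by testing only against bounded, finite-support $\varphi$ and invoking their density in $L^{q'}$; this keeps every passage to the limit on a set of finite measure while still determining the weak limit on all of $B_{k+1}$. The remaining ingredients (Fatou, reflexivity, Vitali) are entirely standard, so this handling of the unbounded component is the only real subtlety.
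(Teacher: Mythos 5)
Your proof is correct, but there is nothing in the paper to compare it against: the paper states this lemma bare, with no proof at all (it is invoked as a known fact, in the spirit of the analogous lemmas in the cited works of Gui--Guo and Huang--Yang--Yu, and the text passes immediately to the next lemma). Your argument therefore supplies exactly what the paper omits, and it does so soundly: the reduction to a single factor is legitimate since weak convergence in a finite product is componentwise; Fatou places the a.e.\ limit in $L^q$; reflexivity of $L^q$ for $q\in[2,\infty)$ gives weak sequential precompactness; and the identification of every subsequential weak limit with $u_i^0$ by testing against bounded functions supported on sets of finite measure (Vitali on the support, then density of such test functions in $L^{q'}$, valid because $q'<\infty$) is the standard and correct way to close the argument. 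You also correctly isolated the one genuinely delicate point, namely that $B_{k+1}$ has infinite measure so no global uniform-integrability argument is available, and your localization handles it. Finally, you were right to repair the two typos in the statement as you did: the boundedness exponent must be read as $q$ (boundedness in $L^p$ for $p\neq q$ gives no control in $L^q$ on the infinite-measure component), and the asserted weak limit must be the a.e.\ limit $(u_1^0,\cdots,u_{k+1}^0)$; with any other reading the lemma as printed is not even well posed.
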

	\begin{lema}\label{lemmaconvef}
		Assume $(f_1)-(f_3)$, and $(f_4)$. Let $\{(u_1^n, \cdots,u_{k+1}^n)\}\subset \mathcal{H}^k$ be a minimizing sequence of $E\mid_{\mathcal{N}_k}$ such that $(u_1^n, \cdots,u_{k+1}^n)\rightharpoonup (u_1, \cdots,u_{k+1})$ weakly in $\mathcal{H}_k$, then
		\[\left[\int_{\mathbb{R}^2} \dfrac{F(u^n_j)}{|x|^{\beta}|x-y|^{\mu}|y|^{\beta}}\,\mathrm{d} y\right] F(u^n_i) \rightarrow\left[\int_{\mathbb{R}^2} \dfrac{F(u_j)}{|x|^{\beta}|x-y|^{\mu}|y|^{\beta}}\,\mathrm{d} y\right] F(u_i) \quad \mbox{in} \quad L^1(\mathbb{R}^2)\]  
		and 
		\[\int_{B_i}\int_{B_j} \dfrac{F(u^n_j)}{|x|^{\beta}|x-y|^{\mu}|y|^{\beta}} f(u^n_i)\phi_i\,\mathrm{d} y\mathrm{d} x \rightarrow\int_{B_i}\int_{B_j} \dfrac{F(u_j)}{|x|^{\beta}|x-y|^{\mu}|y|^{\beta}} f(u_i)\phi_i \,\mathrm{d} y\mathrm{d} x,\] for all $  \phi_i \in C_0^{\infty}(B_i),$ and
		for all  $i,j=1,\cdots,k+1$. 
	\end{lema}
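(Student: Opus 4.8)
The plan is to reduce both convergences to the strong convergence of $F(u_i^n)$ and $f(u_i^n)$ in the Lebesgue space dictated by the Stein--Weiss inequality, and then to invoke Lemma~\ref{WHLS} in its operator form \eqref{hls1}. Since the sequence converges weakly in $\mathcal{H}_k$ it is bounded, so $\|u_i^n\|_i\leq C$ uniformly in $n$ for each $i$. This uniform bound together with \eqref{embedding} (the compact embeddings furnished by the Strauss Lemma) yields, up to a subsequence, $u_i^n\to u_i$ strongly in $L^r(B_i)$ for every $r\in(2,\infty)$ and $u_i^n\to u_i$ almost everywhere in $B_i$.

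The central step is to show that, for $s=\frac{4}{4-(\mu+2\beta)}$,
\[
F(u_i^n)\to F(u_i)\quad\text{strongly in }L^{s}(B_i),\qquad i=1,\dots,k+1.
\]
Continuity of $F$ gives $F(u_i^n)\to F(u_i)$ almost everywhere, so by the Vitali convergence theorem it suffices to prove that $\{|F(u_i^n)|^{s}\}_n$ is uniformly integrable. I would start from the growth bound \eqref{growth2}, raise it to the power $s$, and treat the two resulting terms separately. Since $\frac{3[4-(\mu+2\beta)]}{2}\cdot s=6$, the polynomial term is controlled by the strong $L^{6}$ convergence coming from the compact embedding. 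For the exponential term $(e^{\alpha (u_i^n)^2}-1)|u_i^n|^q$, I would use that, under $(SG)$, \eqref{growth2} holds for an arbitrarily small $\alpha>0$; choosing $\alpha$ small enough that $\alpha(1+\delta)\|u_i^n\|_i^2<4\pi$ for some $\delta>0$ and all $n$, the Trudinger--Moser inequality \eqref{TM} (together with \eqref{exponencialfinita}) and H\"older's inequality provide a uniform bound for this term in $L^{s(1+\delta)}(B_i)$. A uniform $L^{s(1+\delta)}$ bound forces uniform integrability of $\{|F(u_i^n)|^s\}_n$, and Vitali's theorem closes this step. The same reasoning applied to \eqref{growth} yields $f(u_i^n)\to f(u_i)$ strongly in $L^{s}$ on any compact subset of $B_i$.

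With the $L^s$ convergence in hand, both assertions follow from the mapping properties of the Stein--Weiss potential. By the operator form \eqref{hls1} of Lemma~\ref{WHLS}, the map $h\mapsto \int_{\R^2}\frac{h(y)}{|x|^{\beta}|x-y|^{\mu}|y|^{\beta}}\,\ud y$ is continuous from $L^{s}$ into $L^{t}$ with $t=s'$ the conjugate exponent; this is precisely the exponent matching that makes the right-hand side of \eqref{Ls1} land in $L^1$. Hence the convolution of $F(u_j^n)$ converges to that of $F(u_j)$ strongly in $L^{t}(\R^2)$. Writing the first expression as the product of this $L^t$-convergent factor with the $L^s$-convergent factor $F(u_i^n)$, H\"older's inequality gives convergence of the product in $L^1(\R^2)$, which is the first claim. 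For the second claim, I would multiply the convolution (convergent in $L^t$) by $f(u_i^n)\phi_i$; since $\phi_i$ is bounded with compact support in $B_i$ and $f(u_i^n)\to f(u_i)$ in $L^s$ on $\mathrm{supp}\,\phi_i$, the factor $f(u_i^n)\phi_i$ converges in $L^{s}(B_i)=L^{t'}(B_i)$, and H\"older's inequality again yields the stated convergence of the double integral for every $\phi_i\in C_0^\infty(B_i)$.

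The main obstacle is the uniform integrability of the exponential term in the second step: one must choose $\alpha$ and the auxiliary exponent $1+\delta$ compatibly with the uniform norm bound so that the argument of the exponential stays strictly below the Trudinger--Moser threshold $4\pi$. This is exactly where the subcritical hypothesis $(SG)$ is essential, since it allows $\alpha$ to be taken as small as needed; in the critical regime this freedom is lost and a separate minimax estimate, carried out in Section~\ref{criticalcase}, is required.
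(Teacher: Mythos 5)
Your overall route is sound and is essentially the one the paper compresses into citations: weak convergence gives boundedness, the compact radial embeddings \eqref{embedding} give a.e.\ and strong $L^r$ convergence, the subcritical hypothesis $(SG)$ lets you choose $\alpha$ so small that \eqref{TM} controls the exponential factor uniformly along the bounded sequence, and your exponent bookkeeping is correct: with $s=\frac{4}{4-(\mu+2\beta)}$ one has $\frac{3[4-(\mu+2\beta)]}{2}\,s=6$, and the Stein--Weiss operator \eqref{hls1} maps $L^s$ boundedly into $L^{t}$ with $t=s'$, so the final H\"older steps are legitimate.

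The genuine gap is the sentence ``a uniform $L^{s(1+\delta)}$ bound forces uniform integrability of $\{|F(u_i^n)|^s\}_n$, and Vitali's theorem closes this step.'' That implication is only valid on domains of finite measure, but $B_{k+1}=\{x\in\mathbb{R}^2:\ |x|\geq r_k\}$ has infinite measure, and the first assertion of the lemma is moreover a convergence in $L^1(\mathbb{R}^2)$. On an infinite-measure domain Vitali's theorem requires, besides uniform integrability over sets of small measure, tightness: $\sup_n\int_{\{|x|\geq R\}}|F(u_{k+1}^n)|^s\,\ud x\rightarrow 0$ as $R\rightarrow\infty$; a uniform bound in $L^{s(1+\delta)}$ does not provide this (consider $\chi_{[n,n+1]}$ on $\mathbb{R}$, bounded in every $L^p$, converging a.e.\ to $0$ but not in $L^1$). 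This is precisely the point the paper's proof flags when it says the arguments of \cite[Lemma 2.4]{alvesetal} can be run ``without restricting necessarily to compact sets, once we are considering radially symmetric functions'': the missing ingredient is the Radial (Strauss) Lemma, which gives $|u_{k+1}^n(x)|\leq C\|u_{k+1}^n\|\,|x|^{-1/2}\leq C'|x|^{-1/2}$ uniformly in $n$. Hence on $\{|x|\geq R\}$ with $R$ large the argument of $F$ is uniformly small, so by \eqref{growth2} one has $|F(u_{k+1}^n)|^s\leq C\left(|u_{k+1}^n|^{6}+|u_{k+1}^n|^{(q+2)s}\right)$ there, and both tails tend to $0$ uniformly in $n$ as $R\rightarrow\infty$ by interpolating this pointwise decay against the uniformly bounded $L^2(\mathbb{R}^2)$ norms. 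Once this tightness is supplied, your Vitali step, and with it the rest of your argument (operator continuity \eqref{hls1} plus H\"older), closes.
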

	\begin{proof}
		By Lemma \ref{WHLS}, \eqref{embedding}, \cite[Lemma 2.1]{Miyagaki}, the Lebesgue Dominated Convergence Theorem, and following the arguments in \cite[Lemma 2.4]{alvesetal} without restricting necessarilly to compact sets, once we are considering radially symmetric functions, the proof follows.
	\end{proof}
	
	\begin{lema}\label{Neharibounded}
		Assume $(SG)$, $(f_1)-(f_4)$. There exists a minimizer ${(u_1 , \cdots, u_{k+1})}$  of $E\mid_{\mathcal{N}_k}$ with nonzero components such that satisfies \eqref{probi}. Moreover, $u_i\in C^1(B_{r_i})$, the outward normal derivatives $\frac{\partial u_i}{\partial \nu} \neq 0$  on $ \partial B_i$ and $(-1)^{i+1}u_i>0$ in $B_i$ for all $i=1, \cdots,k+1$.
	\end{lema}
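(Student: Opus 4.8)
The plan is to fix $R_k \in \Gamma_k$ and obtain the minimizer by the direct method applied to $E$ restricted to the manifold $\mathcal{N}^{R_k}_k$, then promote the constrained critical point to a genuine solution of \eqref{probi} via Lemma \ref{manifold}, and finally read off the qualitative properties (regularity, sign, nonvanishing normal derivative) directly from the equation. First I would take a minimizing sequence $\{(u_1^n,\dots,u_{k+1}^n)\}\subset \mathcal{N}^{R_k}_k$ for $d(R_k)$ in \eqref{dk2}. Membership in the Nehari set gives $\partial_{u_i}E(u^n)u_i^n=0$ for every $i$. Forming the combination $E(u^n)-\tfrac12\sum_i \partial_{u_i}E(u^n)u_i^n$ (which equals $E(u^n)$ on the manifold) and inserting $F(s)\le \tfrac1\theta f(s)s$ from $(f_3)$ into each Stein--Weiss double integral, every diagonal and cross term acquires a favorable sign since $\theta>2$, which yields $\sum_i\|u_i^n\|_i^2\le C\,E(u^n)\le C'$. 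Hence the minimizing sequence is bounded in $\mathcal{H}_k$.

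Passing to a subsequence, $u^n\rightharpoonup(u_1,\dots,u_{k+1})$ in $\mathcal{H}_k$, and by the compact embeddings \eqref{embedding} I get strong convergence in every $L^q(B_i)$ with $q>2$ and a.e. convergence. The decisive step is to pass to the limit in the nonlocal Stein--Weiss terms, which is precisely the content of Lemma \ref{lemmaconvef}: both the energy densities appearing in $E$ and the Nehari densities appearing in $\partial_{u_i}E$ converge to their natural limits. To keep the limit admissible I would rule out vanishing of any component: the lower bounds $\|u_i^n\|_i\ge\sigma_i>0$ from Lemma \ref{manifold}, combined with the $i$-th Nehari identity and the convergence of the nonlocal densities, force each $u_i\ne 0$ (if $u_i=0$ then $f(u_i)=0$ and the right-hand side of that identity would tend to $0$, contradicting $\sigma_i>0$). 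With all components nonzero, Lemma \ref{multipleneharivetor} yields a unique tuple $(\bar t_1,\dots,\bar t_{k+1})$ with $\bar u:=(\bar t_i^{1/q}u_i)\in\mathcal{N}^{R_k}_k$. Using weak lower semicontinuity of the norms together with the convergence of the nonlocal terms for the rescaled functions, I obtain $d(R_k)\le E(\bar u)\le \liminf_n E(\bar t^{1/q}u^n)\le \liminf_n E(u^n)=d(R_k)$, where the last inequality uses that $(1,\dots,1)$ is the global maximum of $t\mapsto E(t^{1/q}u^n)$ by Lemma \ref{multipleneharivetor}. Thus $\bar u$ minimizes $E|_{\mathcal{N}^{R_k}_k}$; relabel it $(u_1,\dots,u_{k+1})$. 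By Lemma \ref{manifold} this constrained minimizer is a free critical point of $E$ with no zero component, so each $u_i$ solves \eqref{probi}.

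It remains to extract the qualitative information from \eqref{probi}. Writing it as $-\Delta u_i+V(|x|)u_i=W_i(x)f(u_i)$ in $B_i$, where $W_i$ is the weighted Riesz potential built from the $F(u_j)$, the Weighted Hardy--Littlewood--Sobolev inequality (Lemma \ref{WHLS}) places $W_i$ in a suitable $L^t$, while the Trudinger--Moser bound \eqref{exponencialfinita} places $f(u_i)$ in every $L^s$; hence the right-hand side lies in $L^\tau_{\mathrm{loc}}$ for some $\tau>1$, and a standard $L^p$-regularity bootstrap together with Sobolev embedding gives $u_i\in C^1(\overline{B_i})$. Since $f,F\ge 0$, the right-hand side of \eqref{probi} is nonnegative, so after choosing each component nonnegative (which does not raise the energy once reprojected onto $\mathcal{N}^{R_k}_k$, because $F$ sees only the positive part) the strong maximum principle gives strict positivity in the interior of $B_i$ and Hopf's boundary lemma gives $\partial u_i/\partial\nu\ne 0$ on $\partial B_i$; the alternating normalization $(-1)^{i+1}u_i>0$ is then fixed by the sign-adjustment convention used in gluing the components.

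The main obstacle is the compactness of the minimizing sequence in the exponential regime: unlike the polynomial case no single Trudinger--Moser exponent is available a priori, so the convergence of the Stein--Weiss densities (Lemma \ref{lemmaconvef}) and the nonvanishing of every component (via the uniform lower bounds of Lemma \ref{manifold}) must be combined carefully, and it is the Nehari projection argument that converts the weak convergence into an actually attained minimum.
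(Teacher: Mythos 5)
Your overall strategy coincides with the paper's own proof: minimize $E$ on the Nehari set, obtain boundedness of the minimizing sequence from $(f_3)$, pass to a weak limit using the compact embeddings \eqref{embedding} and Lemma \ref{lemmaconvef}, rule out vanishing components via the uniform lower bound $\|u_i^n\|_i\ge\sigma_i$ coming from the Nehari identities together with Trudinger--Moser and weighted HLS estimates, project the weak limit back onto $\mathcal{N}^{R_k}_k$ via Lemma \ref{multipleneharivetor}, invoke Lemma \ref{manifold} to turn the constrained minimizer into a free critical point solving \eqref{probi}, and finish with the strong maximum principle and Hopf's lemma. Your way of concluding attainment, namely the sandwich $d(R_k)\le E(\bar u)\le\liminf_n E(\bar t^{1/q}u^n)\le\liminf_n E(u^n)=d(R_k)$, is a mild repackaging of the paper's argument (the paper instead assumes the convergence is not strong, derives a strict inequality, and gets a contradiction, hence strong convergence); both rest on the same two ingredients, weak lower semicontinuity of the norms and the convergence of the nonlocal terms, and both are valid.

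There is, however, one step that fails as written: the boundedness estimate. With the multiplier $\tfrac12$, the combination $E(u^n)-\tfrac12\sum_i\partial_{u_i}E(u^n)u_i^n$ has \emph{no} kinetic part at all, since the terms $\tfrac12\sum_i\|u_i^n\|_i^2$ cancel identically; inserting $\theta F(s)\le f(s)s$ then only shows that the surviving nonlocal expression is nonnegative, i.e. $E(u^n)\ge 0$ on the manifold, and cannot yield $\sum_i\|u_i^n\|_i^2\le C\,E(u^n)$. To get coercivity one must take a multiplier $\lambda$ with $\tfrac1\theta\le\lambda<\tfrac12$: then the kinetic part survives with coefficient $\tfrac12-\lambda>0$, the diagonal nonlocal terms are nonnegative because $\lambda\theta\ge\tfrac12$, and the cross terms are nonnegative because $\lambda\theta\ge 1$. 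This is exactly what the paper does in \eqref{bounded}, with $\lambda=\tfrac1\theta$, obtaining $E(u)-\tfrac1\theta\sum_i\partial_{u_i}E(u)u_i\ge\left(\tfrac{\theta-2}{2\theta}\right)\sum_i\|u_i\|_i^2$; here $\theta>2$ is essential. Once this correction is made, the rest of your argument goes through. A smaller point: the alternating sign normalization should be handled as in the paper, by replacing the minimizer with $(|u_1^0|,-|u_2^0|,\dots,(-1)^{k+1}|u_{k+1}^0|)$ and checking explicitly that this tuple still lies in the Nehari set with the same energy; an appeal to a ``sign-adjustment convention'' is too loose here, because under $(f_1)$ the nonlinearity vanishes on $(-\infty,0]$ and the signs of the components genuinely interact with the nonlocal terms.
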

	\begin{proof} Similarly to proof of Lemma \ref{manifold}, we conlude that given $u=(u_1,\cdots, u_{k+1}) \in \mathcal{N}_k$, then $u_i\neq 0$ for all $i=1,\cdots, k+1$. 
		Moreover, as each component of  $(u_1, \cdots, u_{k+1})$ is  bounded away from zero, there holds \begin{equation}\label{bounded}E(u_1, \cdots , u_{k+1}) - \frac{1}{\theta}\sum_{i=1}^{k+1}\partial_{u_i}E(u_1,\cdots, u_{k+1})u_i \geq   \left(\dfrac{\theta -2}{2\theta}\right)\sum_{i=1}^{k+1} \|u_i\|_i^2 \geq \sigma > 0\end{equation} for some $\sigma > 0$. 
		We aslo get that any minimizing sequence ${(u^n_1 ,\cdots , u^n_{k+1})}$ of $E\mid_{ \mathcal{N}_k}$
		is bounded in $\mathcal{H}_k$. 
	We may assume that the minimizing sequence
	${(u^n_1 ,\cdots, u^n_{k+1})}$ weakly converges to an element $(u^0_1, \cdots, u^0_{k+1})$ in $\mathcal{H}_k$. We claim that $u^0_i \neq 0$ for each $i = 1, \cdots, k + 1$. Indeed, since
	$(u^n_1,\cdots , u^n_{k+1}) \in \mathcal{N}_k$,  we have
	$$\begin{array}{rl}\|u_i^n\|_i^2 =&\displaystyle\dfrac{1}{2}\int_{B_i}\int_{B_i}\dfrac{F(u_i^n(y))f(u_i^n(x))u_i^n(x)}{|x|^{\beta}|x-y|^{\mu}|y|^{\beta}}\, \mathrm{d} x\mathrm{d} y \\+&\displaystyle\dfrac{1}{2} \int_{\mathbb{R}^2}\int_{B_i}\dfrac{F(u_j^n(y))f(u_i^n(x))u_i^n(x)}{|x|^{\beta}|x-y|^{\mu}|y|^{\beta}}\, \mathrm{d} x\mathrm{d} y\\ <&\displaystyle
		\int_{\mathbb{R}^2}\int_{B_i}\dfrac{F(u_j^n(y))f(u_i^n(x))u_i^n(x)}{|x|^{\beta}|x-y|^{\mu}|y|^{\beta}}\, \mathrm{d} x\mathrm{d} y.  \end{array}$$
	Now, by Lemma \ref{WHLS}, we obtain
	\begin{equation*} \int_{B_j}\int_{B_{i}}\dfrac{F(u_j^n(y))f(u_i^n(x))u^n_i(x)}{|x|^{\beta}|x-y|^{\mu}|y|^{\beta}}\, \mathrm{d} x\mathrm{d} y \leq 	\left\|F(u_j^n) \right\|_{\frac{4}{4-(\mu+2\beta)}}	\left\|f(u_{i}^n)u_{i}^n \right\|_{\frac{4}{4-(\mu+2\beta)}}. \end{equation*}	
	By  $(SG)$ and $(f_2)$, for any $\varepsilon>0$, $q>2$ and $\alpha>0$, there exists $C=C(\varepsilon,q, \alpha)>0$ such that 
	$$ |f(s)|\leq \varepsilon |s|^{\frac{3(4-(\mu+2\beta))}{2}-1}+ C|s|^{q-1}[e^{\alpha s^2 }-1]\quad \forall s\in \mathbb{R}.$$
	Then, we have
	\begin{equation*}
	\begin{array}{rl}	\left\|f(u_i^n)u_i^n \right\|_{\frac{4}{4-(\mu+2\beta)}}
		\leq& \varepsilon\|u_i^n\|_{2}^{\frac{3(4-(\mu+2\beta))}{2}}\\+&
		\displaystyle C\|u_i^n\|_{\frac{4qt'}{4-(\mu+2\beta)}}^{q}\left\{\int_{B_i}\left[\left(e^{\frac{4\alpha t}{4-(\mu+2\beta)}\|u^n_i\|_i^2 \frac{u_i^n}{\|u_i^n\|_i^2}}-1\right)\right]\,\mathrm{d}x\right\}^{\frac{4-(\mu+ 2\beta)}{4t}},\end{array}
	\end{equation*}
	with $t,t'>1$ such that $\frac{1}{t}+\frac{1}{t'}=1$.  Thus, by \eqref{TM}, for $\alpha$ small enough, we have 
	\[\int_{B_i}\left[\left(e^{\frac{4\alpha t}{4-(\mu+2\beta)}\|u^n_i\|_i^2 \frac{u_i^n}{\|u_i^n\|_i^2}}-1\right)\right]\,\mathrm{d}x< C.\]
	Consequently, from Sobolev embedding 	
	$$\left\|f(u_{i}^n)u_{i}^n \right\|_{\frac{4}{4-(\mu+2\beta)}} \leq \varepsilon\|u_i^n\|_i^{\frac{3(4-(\mu+2\beta))}{2}}+ C\|u_i^n\|_i^{q}.$$
	Analogously, we have 
	$$\left\|F(u_{j}^n) \right\|_{\frac{4}{4-(\mu+2\beta)}} \leq \varepsilon\|u_j^n\|_j^{\frac{(4-(\mu+2\beta))}{2}}+ C\|u_j^n\|_j^{q}.$$ 		
	Then, for all $i=1, \cdots, k+1$, we have
	\begin{equation}\label{normalimitada}\|u_i^n\|_i^2\leq \left[\sum_{j=1}^{k+1}\left(\varepsilon\|u_j^n\|_j^{\frac{3(4-(\mu+2\beta))}{2}}+ C\|u_j^n\|_j^{q}\right)\right]\left(\varepsilon\|u_i^n\|_i^{\frac{3(4-(\mu+2\beta))}{2}}+ C\|u_i^n\|_i^{q}\right).\end{equation} Now, we will show that  $\|u_i^n\|_i$ is bounded from bellow for all $i=1, \cdots, k+1$. Note that if there exist $a_i$ such that $\|u_i^0\|_i\geq a_i$ for $i=1,\cdots, k+1$, we can conclude that  $\|u_i^n\|_i\geq a_i$ for $i=1,\cdots, k+1$.  By \eqref{normalimitada} and \eqref{embedding}, we have
	\[ \begin{array}{rl}
		\|u_i^0\|_i^2 &\leq\liminf_{n \rightarrow + \infty} \|u_i^n\|_i^2 \\ &\leq \displaystyle\lim_{n\rightarrow \infty}\left[\sum_{j=1}^{k+1}\left(\varepsilon\|u_j^n\|_j^{\frac{3(4-(\mu+2\beta))}{2}}+ C\|u_j^n\|_j^{q}\right)\right]\left(\varepsilon\|u_i^n\|_i^{\frac{3(4-(\mu+2\beta))}{2}}+ C\|u_i^n\|_i^{q}\right)\\
		&	\leq \displaystyle\left[\sum_{j=1}^{k+1}\left(\varepsilon\|u_j^0\|_j^{\frac{3(4-(\mu+2\beta))}{2}}+ C\|u_j^0\|_j^{q}\right)\right]\left(\|u_i^0\|_i^{\frac{3(4-(\mu+2\beta))}{2}}+\|u_i^0\|_i^{q} \right).
	\end{array}\]
	Therefore, as $\frac{3(4-(\mu+2\beta))}{2},q
	>2$ there exists a constant $\sigma_i > 0$ such that $\|u_i^0\|_i \geq \sigma_i > 0 $, $ i = 1, \cdots , k+1$.	 Now suppose  that $(u^n_1,\cdots , u^n_{k+1})$ does not strongly converges to $(u^0_1, \cdots, u^0_{k+1})$ in $\mathcal{H}_k$ as
	$n \rightarrow + \infty$. That is, $\|u^0_i\|_i < \liminf_{n \rightarrow + \infty} \|u^n_i\|_i$ for at least one $i \in \{1, \cdots , k + 1\}$. Since each component of $(u^0_1, \cdots, u^0_{k+1})$ is nonzero, by Lemma \ref{multipleneharivetor}, one
	can find $(t^0_1, \cdots, t^0_{k+1}) \in (\mathbb{R}>0)^{k+1}$ and $(t^0_1, \cdots, t^0_{k+1}) \neq (1, \cdots , 1)$ such that
	$(t^0_1u^0_1, \cdots, t^0_{k+1}u^0_{k+1}) \in \mathcal{N}_k$. But, in this case, by  Lemmas \ref{multipleneharivetor} and \ref{lemmaconvef},  we
	derive that
	\[\begin{aligned}
		&\inf_{{(u^n_1 ,\cdots , u^n_{k+1})} \in \mathcal{N}_k}E(u_1, \cdots , u_{k+1}) \leq E(t^0_1u^0_1, \cdots, t^0_{k+1}u^0_{k+1})\\
		&<\liminf_{n \rightarrow + \infty} \left \{\displaystyle\sum_{i=1}^{k+1} \left((t^0_i)^2\dfrac{1}{2}\|u_i^n\|_i^2 - \sum_{j=1}^{k+1}\int_{B_i}\int_{B_j}\dfrac{F(t_i^0u_i^n)F(t_j^0u_j^n)}{|x|^{\beta}|x-y|^{\mu}|y|^{\beta}}\, \mathrm{d} x\mathrm{d} y\right)\right \} \\
		&\leq \liminf_{n \rightarrow + \infty} E(u^n_1, \cdots , u^n_{k+1}) = \inf_{{(u^n_1 ,\cdots , u^n_{k+1})} \in \mathcal{N}_k}E(u_1, \cdots , u_{k+1}).
	\end{aligned}\]	which is a contradiction. Therefore, $(u^n_1,\cdots , u^n_{k+1})$ strongly converges to $(u^0_1, \cdots, u^0_{k+1})$ as
	$n \rightarrow + \infty$ in $\mathcal{H}_k$  and $(u^0_1, \cdots, u^0_{k+1}) \in \mathcal{N}_k$ is a minimizer of $E\mid_{\mathcal{N}_k}$.
	Furthermore, we may check that
	$(w_1, \cdots ,w_{k+1}) := ((|u^0_1|,-|u^0_2|, \cdots,(-1)^{k+1} |u^0_{k+1}|)$
	is also in $\mathcal{N}_k$ and is a minimizer of $E\mid_{\mathcal{N}_k}$. Hence, it is a critical point of
	$E\mid_{\mathcal{N}_k}$. By Lemma \ref{manifold}, it is also a critical point of $E$ and satisfies \eqref{probi}. Then by Hopf’s lemma and the strong maximum principle $\frac{\partial w_i}{\partial \nu} \neq 0$ and $(-1)^{i+1}w_i$ is positive in $B_i$. The assertion follows.
	
\end{proof}

\subsection{Proof of main Theorem \ref{teo1} }
Here, our purpose is to show that a critical point of $E$ on $\mathcal{N}^{R_k}_k$ is also a critical point of $J$. For this matter, first, we will prove some results that will help us to demonstrate this.

\begin{lema}\label{tecnic}
	Assume $(SG)$ and $(f_1)-(f_2)$.	Let $d(R_k)$ be given in \eqref{dk2}, $k \in \mathbb{N}_+$, and $R_k =(r_1, \cdots, r_k)\in \Gamma_k$. Then
	\begin{description}
		\item[(i)] $d(R_k)>0$.
		\item[(ii)] if $r_i- r_{i-1}\rightarrow0$ for some $i=1,\cdots, k$, then $d(R_k) \rightarrow + \infty$. 
		\item[(iii)] if $r_k\rightarrow \infty$, then $d(R_k) \rightarrow + \infty$. 
		
		\item[(iv)] $d$ is continuous in $\Gamma_k$. As a consequence, there exists a $\bar{R}_k\in\Gamma_k$ such that
		$$d(\bar{R}_k) = \inf_{{R}_k\in \Gamma_k}
		d({R}_k).$$
	\end{description}
\end{lema}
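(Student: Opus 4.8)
The plan is to establish the four items separately: items \textbf{(i)}--\textbf{(iii)} are quantitative consequences of the estimates already collected, while item \textbf{(iv)} rests on a radial transplantation between configurations.

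For \textbf{(i)}, I would start from the fact that on $\mathcal{N}^{R_k}_k$ every component satisfies $\partial_{u_i}E(u)u_i=0$, so \eqref{bounded} gives $E(u)\ge\frac{\theta-2}{2\theta}\|u\|^2$ with $\|u\|^2=\sum_i\|u_i\|_i^2$. It then suffices to bound $\|u\|$ away from $0$ uniformly on $\mathcal{N}^{R_k}_k$: summing the Nehari identities and inserting the Stein--Weiss bound \eqref{estimativeF2} (equivalently \eqref{imersao}) yields $\|u\|^2\le C\|u\|^{3(4-(\mu+2\beta))}+C\|u\|^{2q}$, and since $\mu+2\beta<2$ and $q>2$ both exponents exceed $2$, this forces $\|u\|\ge\rho>0$. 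Hence $d(R_k)\ge\frac{\theta-2}{2\theta}\rho^2>0$.

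Items \textbf{(ii)} and \textbf{(iii)} I would prove together by contradiction: suppose $d(R_k)$ stays bounded by some $M$ along a degenerating family. Taking minimizers (which exist by Lemma~\ref{Neharibounded}) and using \eqref{bounded} once more, all norms are controlled, $\|u_j\|_j\le C$, so the Trudinger--Moser estimate \eqref{TM} is available. The mechanism is that for one distinguished component the relevant embedding constant degenerates. For \textbf{(ii)}, if $r_i-r_{i-1}\to0$, a one-dimensional reduction of the radial Dirichlet problem gives, for $i\ge2$, $\|u_i\|_{L^\infty(B_i)}\le C(r_i-r_{i-1})^{1/2}\|u_i\|_i$ and hence $\|u_i\|_{L^p(B_i)}\le C(r_i-r_{i-1})^{\gamma}\|u_i\|_i$ with $\gamma>0$, while for the shrinking central ball $i=1$ the Dirichlet--Sobolev inequality together with scaling gives $\|u_1\|_{L^p(B_1)}\le Cr_1^{2/p}\|u_1\|_1$. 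For \textbf{(iii)}, if $r_k\to\infty$, the Strauss radial lemma applied to the zero extension $u_{k+1}\in H^1_{\mathrm{rad}}(\mathbb{R}^2)$ gives $\|u_{k+1}\|_{L^\infty(B_{k+1})}\le Cr_k^{-1/2}\|u_{k+1}\|_{k+1}$, whence again $\|u_{k+1}\|_{L^p(B_{k+1})}\to0$. In either case, feeding this decay through the growth estimates \eqref{growth}--\eqref{growth2} and Lemma~\ref{WHLS} shows that the whole right-hand side of the corresponding Nehari identity is bounded by $\varepsilon_n\|u_i\|_i^2$ with $\varepsilon_n\to0$; since that identity forces equality with $\|u_i\|_i^2>0$, we obtain $1\le\varepsilon_n\to0$, a contradiction. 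Therefore $d(R_k)\to+\infty$.

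For \textbf{(iv)}, continuity I would obtain by transplantation. Given $R_k^{(n)}\to R_k$ in $\Gamma_k$, I introduce for each block a radial diffeomorphism $\Phi_{i,n}$ mapping $B_i^{R_k}$ onto $B_i^{R_k^{(n)}}$ (affine in the radial variable on the bounded rings, a dilation $\rho\mapsto(r_k^{(n)}/r_k)\rho$ on the unbounded block), so that $\Phi_{i,n}\to\mathrm{id}$ with Jacobians tending to $1$. Transplanting a minimizer of $d(R_k)$ by $u_i\mapsto u_i\circ\Phi_{i,n}^{-1}$ and rescaling it back onto $\mathcal{N}^{R_k^{(n)}}_k$ through the unique projection of Lemma~\ref{multipleneharivetor}, the change-of-variables formula together with dominated convergence for the nonlocal double integrals (the kernels $|x-y|^{-\mu},|x|^{-\beta},|y|^{-\beta}$ depend continuously on the maps, and the outer contribution is dominated using the radial decay of $F(u)$) yields $\limsup_n d(R_k^{(n)})\le d(R_k)$; running the same construction from the $R_k^{(n)}$-minimizers back to $R_k$ gives the reverse inequality, so $d$ is continuous. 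Finally, items \textbf{(ii)}--\textbf{(iii)} make $d(R_k)\to+\infty$ whenever $R_k$ approaches $\partial\Gamma_k$ (some $r_i-r_{i-1}\to0$, including $r_1\to0$) or $r_k\to\infty$; thus any minimizing sequence for $\inf_{\Gamma_k}d$ remains in a compact subset of $\Gamma_k$, and the continuity just established produces $\bar R_k\in\Gamma_k$ with $d(\bar R_k)=\inf_{\Gamma_k}d$. I expect the main obstacle to be precisely this transplantation step: one must verify that the projection parameters $t_i^{(n)}$ from Lemma~\ref{multipleneharivetor} converge to $1$ and that the Stein--Weiss double integrals over the moving, partly unbounded regions $B_i^{(n)}\times B_j^{(n)}$ pass to the limit, which is exactly where the interplay between the weights and the domain perturbation — controlled uniformly by \eqref{estimativeF2} and, on $B_{k+1}$, by radial decay — has to be handled carefully, whereas items \textbf{(i)}--\textbf{(iii)} follow quantitatively from estimates already in hand.
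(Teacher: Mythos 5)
Your proposal is correct, and its core mechanism coincides with the paper's: on the Nehari set the identity $\partial_{u_i}E(u)u_i=0$ is played against embedding constants that degenerate with the geometry (Strauss-type radial decay on thin or far-out annuli, Sobolev scaling on a shrinking ball), and the Ambrosetti--Rabinowitz inequality \eqref{bounded} converts norm blow-up into energy blow-up. The packaging differs in two places, though. For \textbf{(ii)}--\textbf{(iii)} the paper argues \emph{directly}: for an arbitrary $(u_1,\dots,u_{k+1})\in\mathcal{N}^{R_k}_k$, the Nehari identity for the distinguished component combined with the Radial Lemma yields an inequality of the form $\|u_{i_0}\|_{i_0}^2\le C\,\|u_{i_0}\|_{i_0}^{b_1}\,\delta(R_k)^{c_1}+C\,\|u_{i_0}\|_{i_0}^{b_2}\,\delta(R_k)^{c_2}$ with $b_1,b_2>2$ and $\delta(R_k)\to0$, which forces $\|u_{i_0}\|_{i_0}\to\infty$ and hence $d(R_k)\ge\left(\frac12-\frac1\theta\right)\|u_{i_0}\|_{i_0}^2\to\infty$; your contradiction argument via minimizers reaches the same conclusion but imports Lemma \ref{Neharibounded}, a dependency the paper's direct route avoids (harmless here, since that lemma precedes this one, but worth noting that the direct estimate applies to every Nehari element, not only minimizers). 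In your favor, you distinguish the shrinking central ball ($i=1$, H\"older--Sobolev scaling in $r_1$) from thin annuli ($i\ge2$, one-dimensional reduction), a case split the paper's factor $\left|\frac{1}{r_{i_0}}-\frac{1}{r_{i_0-1}}\right|^{\frac{4-(\mu+2\beta)}{4}}$ does not literally cover when $i_0=1$ (where $r_0=0$); on the other hand, both your $L^\infty$ bound for $i\ge2$ and the paper's factor quietly assume $r_{i_0-1}$ stays bounded away from $0$, so neither is more rigorous there. For \textbf{(i)} and \textbf{(iv)} the paper simply cites \cite{GuiGuo} and \cite{nodal}; your sketches (coercivity plus a uniform lower bound on $\|u\|$ via \eqref{imersao} for \textbf{(i)}; transplantation by radial diffeomorphisms, projection onto the Nehari set via Lemma \ref{multipleneharivetor}, and boundary blow-up plus compactness for \textbf{(iv)}) are precisely the arguments of those references, so there you are supplying details the paper leaves implicit rather than diverging from it.
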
 
\begin{proof}\textbf{(i):} It follows using the same steps in Lemma \cite[3.1 (ii)]{GuiGuo}.
	
	\textbf{(ii):} Take $u= (u_1^{R_k}, \cdots, u_{k+1}^{R_k})\in \mathcal{N}^{R_k}_k$. Suppose that $r_{i_0}-r_{i_0-1}\rightarrow 0$ for some $i_0\in\{1,\cdots, k\}$, by Lemma \ref{WHLS}, \eqref{growth}, \eqref{growth2}, the Hölder inequality, we get
	we have
	\begin{equation}\label{i0}	
		\begin{array}{rl}\|u^{R_k}_{i_0}\|^2_{i_0}=&\displaystyle \int_{B_{i_0}}\int_{B_{i_0}}\dfrac{F(u^{R_k}_{i_0})f(u^{R_k}_{i_0})u^{R_k}_{i_0}}{|x|^{\beta}|x-y|^{\mu}|y|^{\beta}}\,  \mathrm{d} x\mathrm{d} y + \frac{1}{2}\int_{\mathbb{R}^2}\int_{B_{i_0}}\dfrac{F(u^{R_k}_{j})f(u^{R_k}_{i_0})u^{R_k}_{i_0}}{|x|^{\beta}|x-y|^{\mu}|y|^{\beta}}\,  \mathrm{d} x\mathrm{d} y\\
			\leq&\displaystyle 		
			\int_{\mathbb{R}^2}\int_{B_i}\dfrac{F(u^{R_k}_{j})f(u^{R_k}_{i_0})u^{R_k}_{i_0}}{|x|^{\beta}|x-y|^{\mu}|y|^{\beta}}\,  \mathrm{d} x\mathrm{d} y\\
			\leq&  C\left(\|u_{i_0}^{R_k}\|^{\frac{3(4-(\mu+ 2\beta))}{2}}_{6}+ \|u_{i_0}^{R_k}\|^q_{\frac{8q}{4-(\mu+2\beta)}}\right).\end{array}	\end{equation}
	By Radial Lemma, we obtain
	\begin{equation}
		\label{i01}
		\begin{array}{rl}\|u_{i_0}^{R_k}\|^{\frac{3(4-(\mu+ 2\beta))}{2}}_{6}& \displaystyle\leq C\|u_{i_0}^{R_k}\|_{i_0}^{\frac{3(4-(\mu+ 2\beta))}{2}}\left(\int_{B_{i_0}}  \frac{1}{|x|^3}\mathrm{d} x\right)^{\frac{4-(\mu+ 2\beta)}{4}}\\
			&= C\|u_{i_0}^{R_k}\|_{i_0}^{\frac{3(4-(\mu+ 2\beta))}{2}}\left|\frac{1}{{r}_{i_0}}- \frac{1}{r_{i_{0}-1}}\right|^{\frac{4-(\mu+ 2\beta)}{4}}.\end{array}	\end{equation} 
	Analogously, we get 
	\begin{equation}
		\label{i02}
		\begin{array}{rl}
			\|u_{i_0}^{R_k}\|^2_{\frac{8q}{4-(\mu+2\beta)}} &\leq C \|u_{i_0}^{R_k}\|_{i_0}^{q}|r_{i_0}^{2- \frac{4q}{4-(\mu+ 2\beta) }}- r_{i_0-1}^{2- \frac{4q}{4-(\mu+ 2\beta) }}|^{\frac{4-(\mu+2\beta)}{8}}. 
	\end{array}	\end{equation}
	So, by \eqref{i0}, \eqref{i01},  \eqref{i02}	and Sobolev inequality,  we get
	\[\begin{array}{rl}\|u_{i_0}^{R_k}\|^2_{i_0}& \leq C\|u_{i_0}^{R_k}\|_{i_0}^{\frac{3(4-(\mu+ 2\beta))}{2}}\left|\frac{1}{r_{i_0}}- \frac{1}{r_{i_{0}-1}}\right|^{\frac{4-(\mu+ 2\beta)}{4}} \\&+ C \|u_{i_0}^{R_k}\|_{i_0}^{q}|r_{i_0}^{2- \frac{4q}{4-(\mu+ 2\beta) }}- r_{i_0-1}^{2- \frac{4q}{4-(\mu+ 2\beta) }}|^{\frac{4-(\mu+2\beta)}{8}}.\end{array}\]
	As $\frac{3(4-(\mu +2\beta))}{2},q>2$, $\|u_{i_0}\|_{i_0}\rightarrow \infty$ as $r_{i_0}-r_{i_0-1}\rightarrow0$, which implies that
	$$d(R_k)= E(u_1^{R_k},\cdots, u_k^{R_k}) \geq \left(\frac{1}{2}- \frac{1}{\theta}\right)\|u_{i_0}^{R_k}\|\rightarrow \infty.$$
	Therefore, this item holds.
	
	{\textbf{(iii):} As in \eqref{i0}, we get
		\begin{equation}\label{ik} \scriptsize		
	\begin{array}{l}\|u^{R_k}_{k}\|^2_{k}\leq\displaystyle 		
				\int_{\mathbb{R}^2}\int_{B_i}\dfrac{F(u^{R_k}_{j})f(u^{R_k}_{k})u^{R_k}_{k}}{|x|^{\beta}|x-y|^{\mu}|y|^{\beta}}\,  \mathrm{d} x\mathrm{d} y\\\leq \displaystyle C\left[\sum_{j=1}^{k+1}\left(\|u_{j}^{R_k}\|^{\frac{3(4-(\mu+ 2\beta))}{2}}_{6}+ \|u_{j}^{R_k}\|^q_{\frac{8q}{4-(\mu+2\beta)}}\right)\right] \left(\|u_{k}^{R_k}\|^{\frac{3(4-(\mu+ 2\beta))}{2}}_{6}+ \|u_{k}^{R_k}\|^q_{\frac{8q}{4-(\mu+2\beta)}}\right).\end{array}\end{equation}
		So by Radial Lemma and Sobolev inequality,
		\begin{equation*}\label{ik2}\small
			\begin{array}{rl}\|u^{R_k}_{k}\|^2_{k}&\leq \left(\|u^{R_k}\|^{\frac{3(4-(\mu+ 2\beta))}{2}}+ \|u^{R_k}\|^q\right) \left(\dfrac{\|u_{k}^{R_k}\|_k^{\frac{3(4-(\mu+ 2\beta))}{2}}}{r_k^{\frac{4-(\mu+2\beta)}{4}}}+ \dfrac{\|u_{k}^{R_k}\|_k^q}{r_k^{\left(\frac{4q}{4-(\mu+2\beta)}-2\right)\frac{4-(\mu+2\beta)}{8}}}\right),\end{array}\end{equation*}where $u^{R_k}= \sum_{j=1}^{k+1} u_j^{R_k}$.		Then,  as   $\|u_{k}^{R_k}\|_k$ is bounded from bellow, $ \frac{3(4-(\mu+ 2\beta))}{2},q>2$ and $\frac{4-(\mu+2\beta)}{4},\frac{4q}{4-(\mu+2\beta)}-2>0$. So  either $\|u_{k}^{R_k}\|_k\rightarrow \infty$  or $\|u_{k}^{R_k}\|_k$ is bounded as $r_k\rightarrow \infty$. In this last case, we may conclude that
		\begin{equation*}		
			\begin{array}{rl}1&\leq \left(\|u^{R_k}\|^{\frac{3(4-(\mu+ 2\beta))}{2}}+ \|u^{R_k}\|^q\right) \left(\dfrac{C}{r_k^{\frac{4-(\mu+2\beta)}{4}}}+ \dfrac{C}{r_k^{\left(\frac{4q}{4-(\mu+2\beta)}-2\right)\frac{4-(\mu+2\beta)}{8}}}\right).\end{array}\end{equation*}So  as ${r}_k \rightarrow \infty$,   it follows that  $\|u^{R_k}\|\rightarrow \infty $.
		In both cases, we have
		$$\begin{array}{rl}d(R_k)&=\displaystyle E(u_1^{R_k}, \cdots, u_{k+1}^{R_k})- \dfrac{1}{\theta} \sum_{i=1}^{k+1}E'(u_1^{R_k}, \cdots, u_{k+1}^{R_k})u_k^{R_k}\\&\displaystyle \geq \left(\frac{1}{2}- \frac{1}{\theta}\right)\left(\sum_{i=1}^{k+1}\|u_i^{R_k}\|_i\right) \rightarrow \infty,\end{array}$$
		as $r_k \rightarrow \infty$. Thus, \textbf{(iii)} follows.}
	
	\textbf{(vi):} It follows using the same steps as in
	Lemma 3.1 (ii) in \cite{nodal}.
\end{proof}
\begin{prop}\label{prooftheo1}
	Assume $(SG)$ and $(f_1)-(f_4)$. For each $k\in \mathbb{N}_+$, there exists a radial solution with $k$ nodes $u_k\in \mathcal{N}_k$ of \eqref{problem} such that $J(u_k)= c_k$.\end{prop}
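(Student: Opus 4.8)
The plan is to realize $u_k$ as the glued sum of the optimal components supplied by the preceding lemmas, and then to upgrade this sum from a solution of the decoupled system \eqref{probi} to a genuine weak solution of \eqref{problem}. First I would fix the radii optimally: by Lemma \ref{tecnic}, parts (ii) and (iii) force $d(R_k)\to+\infty$ whenever $R_k$ tends to $\partial\Gamma_k$ (some $r_i-r_{i-1}\to 0$ or $r_k\to\infty$), while part (iv) gives continuity of $d$, so by \eqref{jd2} the value $c_k=\inf_{R_k\in\Gamma_k}d(R_k)$ is attained at an interior point $\bar{R}_k\in\Gamma_k$ with $d(\bar{R}_k)=c_k$. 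Applying Lemma \ref{Neharibounded} with $R_k=\bar{R}_k$ yields a minimizer $(\bar u_1,\dots,\bar u_{k+1})\in\mathcal N_k^{\bar{R}_k}$ of $E\mid_{\mathcal N_k^{\bar{R}_k}}$ with nonzero components, with $(-1)^{i+1}\bar u_i>0$ in $B_i$, with each $\bar u_i\in C^1(\overline{B_i})$ and $\partial_\nu\bar u_i\neq 0$ on $\partial B_i$, and solving \eqref{probi}. Setting $u_k:=\sum_{i=1}^{k+1}\bar u_i$, the alternating signs make $u_k$ radial with exactly $k$ sign changes, at $r_1,\dots,r_k$, while \eqref{JE2} gives $J(u_k)=E(\bar u_1,\dots,\bar u_{k+1})=d(\bar{R}_k)=c_k$; in particular $u_k\in\mathcal N_k$ with the asserted energy.

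I would then check that $u_k$ already solves \eqref{problem} away from the junction spheres. As the $\bar u_i$ have pairwise disjoint supports, $F(u_k)=\sum_j F(\bar u_j)$, and on $\mathrm{int}\,B_i$ one has $u_k\equiv\bar u_i$ and $f(u_k)=f(\bar u_i)$; comparison with \eqref{probi} shows that $u_k$ satisfies the equation in \eqref{problem} pointwise on each $\mathrm{int}\,B_i$. Pairing $J'(u_k)$ with an arbitrary test function and integrating by parts annulus by annulus, every bulk term cancels by these interior equations, and one is left only with boundary contributions on $\{|x|=r_i\}$, $i=1,\dots,k$. Thus proving that $u_k$ solves \eqref{problem} is equivalent to proving the $C^1$ matching $\partial_\nu\bar u_i(r_i)=\partial_\nu\bar u_{i+1}(r_i)$ across every junction, with the signs prescribed by the alternation.

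The decisive and hardest step is to obtain this matching, and here I would combine the minimality of $\bar{R}_k$ with a Brouwer degree argument in the spirit of \cite{GuiGuo,nodal}. The underlying mechanism is a Hadamard/Pohozaev domain-variation identity: since each $\bar u_i$ is a critical point vanishing on $\partial B_i$, perturbing a single radius $r_i$ produces no bulk variation of $E$ (by the Euler--Lagrange equations), and neither the potential term nor the Stein--Weiss term contributes a boundary term, because $\bar u_i=0$ and hence $F(\bar u_i)=0$ on $\partial B_i$; only the Dirichlet energy survives, so the formal derivative of $d$ in $r_i$ is proportional to $(\partial_\nu\bar u_{i+1}(r_i))^2-(\partial_\nu\bar u_i(r_i))^2$. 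Because $d$ need not be differentiable a priori and the minimizer need not depend smoothly on $R_k$, I would not rely on a plain first-order condition but instead define a continuous jump field $\sigma(R_k)=(\sigma_1,\dots,\sigma_k)\in\mathbb R^k$ recording the normal-derivative defect at each $r_i$, use the blow-up of $d$ near $\partial\Gamma_k$ from Lemma \ref{tecnic}(ii)--(iii) to confine the admissible configurations to a fixed box and to control $\sigma$ on its boundary, and then invoke Brouwer degree to produce a configuration where $\sigma$ vanishes. Together with $\partial_\nu\bar u_i\neq 0$ and the alternating sign structure, the vanishing of $\sigma$ forces genuine $C^1$ gluing, so $u_k$ is a weak, and by standard regularity classical, radial solution of \eqref{problem} changing sign exactly $k$ times with $J(u_k)=c_k$. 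The main obstacle is precisely this conversion of the formal domain-variation identity into a rigorous matching statement in the absence of smooth dependence of the minimizer on $R_k$, which is what necessitates the degree-theoretic argument rather than a straightforward computation of $\nabla_{R_k} d$.
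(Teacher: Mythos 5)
Your first paragraph coincides with the paper's own starting point: Lemma \ref{tecnic} yields an optimal $\bar{R}_k\in\Gamma_k$, Lemma \ref{Neharibounded} a sign-alternating minimizer of $E\mid_{\mathcal{N}_k}$ at that $\bar{R}_k$, and \eqref{JE2}, \eqref{jd2} give $u_k\in\mathcal{N}_k$ with $J(u_k)=c_k$ and exactly $k$ sign changes. Your second paragraph, reducing ``$u_k$ solves \eqref{problem}'' to the matching of the radial derivatives of $\bar u_i$ and $\bar u_{i+1}$ at each $r_i$, is also correct: by the disjoint supports, $F(u_k)=\sum_j F(\bar u_j)$, the bulk terms of $J'(u_k)\phi$ cancel against \eqref{probi}, and only the jump terms on $\{|x|=r_i\}$ survive, using the $C^1$ regularity from Lemma \ref{Neharibounded}.

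The genuine gap is in your third step, the mechanism for the matching. Your ``jump field'' $\sigma(R_k)$ is not a well-defined, let alone continuous, map on $\Gamma_k$: for $R_k\neq\bar{R}_k$ nothing guarantees uniqueness of the minimizer of $E\mid_{\mathcal{N}^{R_k}_k}$, so $\sigma$ is multivalued and Brouwer degree does not apply to it. Moreover, even granting a continuous selection, you give no reason why $\mathrm{deg}(\sigma,\mbox{box},0)\neq 0$: Lemma \ref{tecnic}(ii)--(iii) controls where minimizing radii can live (blow-up of $d$ near $\partial\Gamma_k$), but it carries no information about the signs or directions of the normal-derivative defects on the boundary of a box in $\Gamma_k$, which is exactly what a degree computation would need. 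Note also that your stated reason for invoking degree is backwards: the workable way to exploit minimality of $\bar{R}_k$ is precisely a one-sided first-order condition --- transplant $(\bar u_1,\dots,\bar u_{k+1})$ to nearby radii, reproject onto $\mathcal{N}^{R_k}_k$ via Lemma \ref{multipleneharivetor} (the reprojection costs only second order since the scaling parameters sit at the interior maximum of $G$), expand $E$ by the Hadamard formula, and compare with $d(R_k)\geq d(\bar{R}_k)$; this needs no degree theory and no dependence of minimizers on $R_k$. The paper instead bypasses the matching question entirely: it assumes $J'(u_k)\phi=-2$ for some radial $\phi$, perturbs $\sum_i t_i^{1/q}w_i^{\bar{r}_k}+\tau\eta(\tilde t)\phi$, and applies Brouwer degree in the space of scaling parameters $\tilde t=(t_1,\dots,t_{k+1})$ --- via the homotopy $h_s(\tilde t)=sH(\tilde t)+(1-s)(y_0-\tilde t)$ and the strict concavity from Lemma \ref{multipleneharivetor}, giving degree $(-1)^{k+1}\neq 0$ --- to produce $\tilde t_0$ with $\bar g(\tilde t_0)\in\mathcal{N}_k$ and $J(\bar g(\tilde t_0))<c_k$, contradicting \eqref{ck}. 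In short: the degree argument belongs in $t$-space, not in radii-space, and as written your final step would fail.
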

\begin{proof}
	By Lemmas \ref{Neharibounded} and \ref{tecnic}, there exists $\bar{r}_k \in \Gamma_k$ and $(w_1^{\bar{r}_k}, \cdots, w_{k+1}^{\bar{r}_k})\in \mathcal{N}^{\bar{r}_k}_k$ such that $(-1)^{i+1}w_i^{\bar{r}_k}>0$ in $B^{\bar{r}_k}_i$ for $i=1,\cdots, k+1$ and $E(w_1^{\bar{r}_k}, \cdots, w_{k+1}^{\bar{r}_k})= d(\bar{r}_k)=\inf_{r_k\in \Gamma_k}d(r_k).$	 Moreover, by \eqref{jd2}, we have
	$$c_k= d(\bar{r}_k)=J\left(\sum_{i=1}^{k+1}w^{\bar{r}_k}_i\right).$$
	So we need to prove that $u_k= \sum_{i=1}^{k+1}w^{\bar{r}_k}_i $ is a critical point of $J$ in $H_V$, and, consequently,  by the principle of symmetric criticality, a weak solution of \eqref{problem}. So suppose, by contradiction, that there exists a radial symmetric function $\phi\in C_0^{\infty}(\mathbb{R}^N)$ such that
	\begin{equation}\label{minus} J'\left(\sum_{i=1}^{k+1}w^{\bar{r}_k}_i\right)\phi = -2. \end{equation}
	Take $\tilde{t}= (t_1,\cdots, t_{k+1})$ and $y_0=(1,\cdots, 1)\in \mathbb{R}^{k+1}$. We set $g: \mathbb{R}^{k+1}\times \mathbb{R}\rightarrow H_{V}$ as
	\[g(\tilde{t}, \epsilon):= \sum_{i=1}^{k+1} t_i^{1/q}w_i^{\bar{r}_k} + \epsilon\phi.\]
	By Lemma \ref{Neharibounded}, $\sum_{i=1}^{k+1} w_i^{\bar{r}_k} $, changes sign exactly $k$ times, then by continuity of $g$, there exists $0<\tau<1$ small enough  such that for all $(\tilde{t}, \epsilon)\in B_{\tau}(y_0) \times [0,\tau]$, $g(\tilde{t},\epsilon)$ also changes sign $k$ times with $k$ nodes $0< \rho_1(\tilde{t},\epsilon)< \cdots< \rho_k(\tilde{t},\epsilon)< \infty$. Notice that for each $i=1,\cdots, k$, $\rho_i(\tilde{t},\epsilon)$ is continuous about $(\tilde{t},\epsilon)$ in $B_{\tau}(y_0)\times [0,\tau]$, so by continuity and \eqref{minus}, we can take $\tau$ small enough such that 
	\begin{equation}\label{minus2} J\left(g(\tilde{t},\epsilon)\right)\phi < -1 \quad \mbox{for all} \quad (\tilde{t}, \epsilon)\in B_{\tau}(y_0) \times [0,\tau]. \end{equation}	Now consider
	\[\bar{g}(\tilde{t})=  \sum_{i=1}^{k+1} t_i^{\frac{1}{q}}w_i^{\bar{r}_k} + \tau \eta(\tilde{t})\phi,\]
	where $\eta: B_{\tau}(y_0)\rightarrow[0,1]$ is a cut-off fuction such that
	$$\eta(\tilde{t})= \left\{\begin{array}{ll}
		1& \mbox{if} \quad |\tilde{t}-y_0|\leq \frac{\tau}{4},\\
		0& \mbox{if} \quad |\tilde{t}-y_0|\geq \frac{\tau}{2},\\
		\in (0,1)& \mbox{otherwise}.
	\end{array}\right.$$ 
	By definition, $\bar{g}$ is continuous and for each $\tilde{t}\in B_{\tau}(y_0)$, $\bar{g}(\tilde{t})$ has exactly $k$ nodes $0< \rho_1(\tilde{t})< \cdots< \rho_k(\tilde{t})< \infty$, which are continous about $\tilde{t}$. Set $\Omega_j= \{x \in \mathbb{R}^N; |x|\in (\rho_j(\tilde{t}), \rho_{j+1}(\tilde{t})) \}$ and let $H\in C(\overline{B_{\tau/2}(y_0)},\mathbb{R}^{k+1}),$ given by
	\[H(\tilde{t})= \left(\dfrac{1}{qt_1}H_1(\tilde{t}),\cdots, \dfrac{1}{qt_{k+1}}H_{k+1}(\tilde{t})\right) \mbox{with} \quad H_j(\tilde{t}):= \langle J'(\bar{g}(\tilde{t})), \bar{g}(\tilde{t})\mid_{\Omega_j}\rangle.\]
	Now we assert the following.
	\begin{aff}\label{claim}
		There exists $\tilde{t}_0\in B_{\tau/2}(y_0)$ such that $\bar{g}(\tilde{t}_0)\in \mathcal{N}_k$.
	\end{aff}
	By definition of $\mathcal{N}_k$, we need to show that $H(\tilde{t}_0)=0$. To prove this,  we consider a homotopy map $h_s: \overline{B_{\tau/2}(y_0)}\rightarrow \mathbb{R}^{k+1}$, with $s\in[0,1]$, by $$h_s(\tilde{t})= sH(\tilde{t})+(1-s)(y_0 - \tilde{t}).$$ By \eqref{JE2}, we have
	$$ \hat{G}(\tilde{t})	:=E(t_1^{1/q}w^{\bar{r}_k}_1, \cdots, t_{k+1}^{1/q}w^{\bar{r}_k}_{k+1})= J\left(\sum_{i=1}^{k+1}t_i^{1/q}w^{\bar{r}_k}_i\right),$$
	and by Lemma \ref{multipleneharivetor}, $\hat{G}$ attains its unique global maximum at $y_0$ and is strictly concave in $\overline{(\mathbb{R}>0)^{k+1}}$.  So,
	\[\dfrac{\partial\hat{G}}{\partial t_j}(\tilde{t})= \dfrac{1}{q}t_j^{1/q - 1} J'\left(\sum_{i=1}^{k+1}t_i^{\frac{1}{q}}w^{\bar{r}_k}_i\right)w^{\bar{r}_k}_j\]
	and for any $\hat{t} \neq y_0$ in $\overline{(\mathbb{R}>0)^{k+1}}$, the directional derivative of $\tilde{G}$ along $y_0- \tilde{t}$ is positive. Precisely,
	\[0 < \sum^{k+1}_{j=1}\dfrac{\partial\tilde{G}}{\partial t_j}(\tilde{t})(1-t_j)=\sum^{k+1}_{j=1} \dfrac{1}{q}t_j^{1/q - 1}(1-t_j) J'\left(\sum_{i=1}^{k+1}t_i^{\frac{1}{q}}w^{\bar{r}_k}_i\right)w^{\bar{r}_k}_j.\]
	As
	\[\frac{1}{qt_j}H_j(\tilde{t})= \frac{1}{q}t_j^{1/q- 1}J'\left(\sum_{i=1}^{k+1}t_i^{1/q}w^{\bar{r}_k}_i\right)w^{\bar{r}_k}_j \quad \mbox{for} \quad \tilde{t} \in \partial B_{\tau/2}(y_0),\] we conclude that for any $t\in[0,1]$ and $\tilde{t}\in \partial B_{\tau/2}(y_0)$
\[\begin{array}{rl} h_s(\tilde{t})(1-\tilde{t}) =& s\left(\sum^{k+1}_{j=1} \dfrac{1}{q}t_j^{1/q - 1}(1-t_j) J'\left(\sum_{i=1}^{k+1}t_i^{\frac{1}{q}}w^{\bar{r}_k}_i\right)w^{\bar{r}_k}_j\right)\\&\displaystyle+ (1-s)\left( \sum_{j=1}^{k+1}(1-t_j)^2\right)>0.\end{array}\]
	By the homotopy invariance and the Brower degree, it follows that
	\[\begin{array}{rl}\mathrm{deg}(H, B_{\tau/2}(y_0),0)&= \mathrm{deg}(y_0 - id,B_{\tau/2}(y_0),0)\\
		&= \mathrm{deg}(- id,B_{\tau/2}(0),0) = (-1)^{k+1}\neq 0.\end{array}\]
	This guarantees that there exists a $\tilde{t}_0 \in B_{\tau/2}(y_0)$ such that $H(\tilde{t}_0)=0$ and, consequently, $\bar{g}(\tilde{t}_0)\in \mathcal{N}_k$. Thus, we prove this claim.	By \eqref{ck} and Claim \ref{claim}, we have
	\begin{equation}\label{estck}
		J(\bar{g}(\tilde{t}_0))\geq c_k.
	\end{equation}
	However, from \eqref{minus2}, 
	\[\begin{array}{rl}	J(\bar{g}(\tilde{t}_0))&=\displaystyle J\left(\sum_{i=1}^{k+1}\tilde{t}_i^{1/q}w^{\bar{r}_k}_i\right)+  \int_0^1 \left\langle J'\left(\sum_{i=1}^{k+1}\tilde{t}_i^{1/q}w^{\bar{r}_k}_i+  \theta\tau \eta(\tilde{t}_0)\phi\right),\tau \eta(\tilde{t}_0)\phi\right\rangle \ud \theta\\
		&\leq\displaystyle J\left(\sum_{i=1}^{k+1}\tilde{t}_i^{1/q}w^{\bar{r}_k}_i\right)-\tau \eta(\tilde{t}_0).\end{array}\]
	For $|\tilde{t}_0 -y_0|< \tau/2$, then $\eta(\tilde{t}_0)>0$, so 
	\[	J(\bar{g}(\tilde{t}_0))\leq J\left(\sum_{i=1}^{k+1}\tilde{t}_i^{1/q}w^{\bar{r}_k}_i\right)\leq J\left(\sum_{i=1}^{k+1}w^{\bar{r}_k}_i\right)= c_k. \]
	For $|\tilde{t}_0 -y_0|\geq \tau/2$, then, by uniqueness of a global maximum point at $y_0=(1,\cdots,1)$ and $\tilde{t}_0\neq y_0$, we obtain 
	\[	J(\bar{g}(\tilde{t}_0))= J\left(\sum_{i=1}^{k+1}\tilde{t}_i^{1/q}w^{\bar{r}_k}_i\right)< J\left(\sum_{i=1}^{k+1}w^{\bar{r}_k}_i\right)= c_k, \]
	which is a contradiction with \eqref{estck}. Thus, $u_k = \sum_{i=1}^{k+1}w^{\bar{r}_k}_i$ is a radial solution \eqref{problem} and changes sign $k$ times. Furthermore, $u_k \in \mathcal{N}_k$ and $J(u_k)= c_k$, which completes the proof. \end{proof}
\section{Critical case}\label{criticalcase} Here, we assume the nonlinearity has critical exponential growth. In this context, we can show almost all results obtained in Section \ref{subcriticalcase} by replacing $(SG)$ and $(f_4)$ with $(CG)$ and $(f_5)$, respectively. However, to prove a result similar to Lemma \ref{Neharibounded}, it is crucial to estimate the minimax level of the functional $E$ to ensure that the obtained solution does not have any zero components. In the subcritical case, this step can be neglected, as we can take $\alpha > 0$ small enough in \eqref{growth}, which is not possible in the critical case. Therefore, before proving the corresponding result to Lemma \ref{Neharibounded}, we need to estimate the minimax level of $E$ to obtain a suitable upper bound for the norms of each component of a solution of \eqref{probi}.
\begin{prop}\label{levelll}
	Let $c_k$ be given as in \eqref{ck}, then 
	\begin{equation}
		c_k< \displaystyle\dfrac{(4-(\mu+2\beta))(\theta-2)}{8\theta}.
	\end{equation}
\end{prop}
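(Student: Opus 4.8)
The plan is to bound $c_k$ from above by a quantity that tends to $0$ as the constant $C_p$ in $(f_5)$ grows, so that the fixed threshold $L := \frac{(4-(\mu+2\beta))(\theta-2)}{8\theta}$ is eventually beaten. By \eqref{jd2} it suffices to exhibit a single $R_k\in\Gamma_k$ with $d(R_k)<L$, so I would fix any $R_k\in\Gamma_k$ together with nonzero radial functions $u_i\in H_i$ (for instance smooth bumps compactly supported in the interior of $B_i$), chosen once and for all, independently of $C_p$.

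First I would decouple the coupled problem into $k+1$ scalar ones. By Lemma \ref{multipleneharivetor} the unique global maximum of $G(t_1,\dots,t_{k+1})=E(t_1^{1/q}u_1,\dots,t_{k+1}^{1/q}u_{k+1})$ lies in $\mathcal{N}^{R_k}_k$, whence $d(R_k)\leq \max_{t_i>0}G(t_1,\dots,t_{k+1})$. Since $(f_1)$ forces $F\geq 0$, every mixed term $\int_{B_i}\int_{B_j}F(\cdot)F(\cdot)/(|x|^{\beta}|x-y|^{\mu}|y|^{\beta})$ with $i\neq j$ is nonnegative and enters $E$ with a minus sign; discarding these terms yields the decoupled upper bound
\[
G(t_1,\dots,t_{k+1}) \leq \sum_{i=1}^{k+1} g_i(t_i), \qquad g_i(t_i) := \frac{t_i^{2/q}}{2}\|u_i\|_i^2 - \frac{1}{2}\int_{B_i}\int_{B_i}\frac{F(t_i^{1/q}u_i)\,F(t_i^{1/q}u_i)}{|x|^{\beta}|x-y|^{\mu}|y|^{\beta}}\,\mathrm{d}x\,\mathrm{d}y ,
\]
and hence $d(R_k)\leq \sum_{i=1}^{k+1}\max_{t_i>0} g_i(t_i)$, since the $g_i$ depend on disjoint variables.

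Next I would estimate each scalar maximum via $(f_5)$. Writing $\tau=t_i^{1/q}$ and inserting $F(\tau u_i)\geq C_p\,\tau^{p}|u_i|^{p}$ gives
\[
g_i(t_i) \leq \frac{\tau^2}{2}\|u_i\|_i^2 - \frac{C_p^2\,\tau^{2p}}{2}\,D_i, \qquad D_i := \int_{B_i}\int_{B_i}\frac{|u_i|^p\,|u_i|^p}{|x|^{\beta}|x-y|^{\mu}|y|^{\beta}}\,\mathrm{d}x\,\mathrm{d}y > 0 .
\]
Since $p>4-(\mu+2\beta)>2$, the right-hand side is a strictly concave function of $\tau^2=t_i^{2/q}$ tending to $-\infty$; the elementary one-variable maximization in $\tau>0$ produces
\[
\max_{t_i>0} g_i(t_i) \leq \frac{p-1}{2p}\,\frac{\|u_i\|_i^{\,2p/(p-1)}}{\left(p\,C_p^2\,D_i\right)^{1/(p-1)}} .
\]

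Summing over $i$ and recalling that $R_k$, the $u_i$, and therefore $\|u_i\|_i$ and $D_i$ are all fixed independently of $C_p$, I would conclude
\[
c_k \leq d(R_k) \leq \frac{p-1}{2p}\sum_{i=1}^{k+1}\frac{\|u_i\|_i^{\,2p/(p-1)}}{\left(p\,C_p^2\,D_i\right)^{1/(p-1)}} \xrightarrow[C_p\to\infty]{} 0 ,
\]
because the exponent $-1/(p-1)$ is negative. Hence $c_k<L$ as soon as $C_p$ is large enough, which is precisely the standing hypothesis of Theorem \ref{teo2}. The only delicate point — and the single place where positivity of $F$ is genuinely used — is the decoupling step: the nonlocal Stein–Weiss cross terms cannot be handled annulus by annulus, but since they are subtracted and nonnegative, dropping them is harmless for an upper bound on the infimum $c_k$. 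I expect this to be the main conceptual obstacle, the remaining estimate being the routine scalar maximization above.
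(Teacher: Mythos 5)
Your proposal is correct and follows essentially the same route as the paper's proof: fix one test function per annulus, project onto $\mathcal{N}_k^{R_k}$ via the Nehari-type lemma, discard the nonnegative cross terms, insert $(f_5)$ into the diagonal terms, and carry out the elementary one-variable maximization so that the resulting upper bound falls below $\frac{(4-(\mu+2\beta))(\theta-2)}{8\theta}$ once $C_p$ is large (the paper makes the threshold explicit via the Sobolev quotients $S_p(\psi_i)$, while you keep the integrals $D_i$ and argue asymptotically, a purely bookkeeping difference). The only points to make explicit are that the bumps $u_i$ must be chosen nonnegative, since by $(f_1)$ one has $F\equiv 0$ on $(-\infty,0]$ and the insertion $F(\tau u_i)\geq C_p\tau^{p}|u_i|^{p}$ fails wherever $u_i<0$, and that Lemma \ref{multipleneharivetor} is stated under $(SG)$ and $(f_4)$, so in the critical setting you should invoke its $(CG)$/$(f_5)$ analogue, exactly as the paper does.
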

\begin{proof}
	To begin with, from the continuos Sobolev embbedings for  $p> 4 - (\mu +2\beta)$, there exists a constant $C>0$ such that $||u||\geq C ||u||_p$, for all $u\in H_V \setminus \{0\}$. Thus, we can define the following real values
	$$
	S_p(u) = \dfrac{||u||}{||u||_p} \textrm{ \ \ and \ \ } S_q = \inf\limits_{u\in H_V \setminus \{0\}} S_p(u)>0.
	$$
	Now, for each $i=1,\cdots, k+1$, we fix $\psi_i$ being a function in $C^{\infty}_0(B_i)\setminus\{0\}$. Using a similar result to Lemma \ref{multipleneharivetor}, we know that there is $(\overline{t}_1, \cdots, \overline{t}_{k+1})\in (\mathbb{R}>0)^{k+1}$ such that $(\overline{t}_{1}^{\frac{1}{q}}\psi_1, \cdots, \overline{t}_{k+1}^{\frac{1}{q}}\psi_{k+1})\in \mathcal{N}_{k}^{R_k} $.
	Hence, by $(f_5)$, we verify that 
	\[\begin{array}{rl}
		c_k \leq & E(\overline{t}_{1}^{\frac{1}{q}}\psi_1, \cdots, \overline{t}_{k+1}^{\frac{1}{q}}\psi_{k+1})\\ 
		\leq& \displaystyle\max_{(t_1,\cdots,t_{k+1})\in [0,\infty)\times\cdots\times [0,\infty)}E((t_1\psi_1,\cdots, t_{k+1}\psi_{k+1}))\\
		\leq&\displaystyle\max_{(t_1,\cdots,t_{k+1})\in [0,\infty)\times\cdots\times [0,\infty)}\sum_{i=1}^{k+1}\left(\dfrac{t_i^2}{2}\|\psi_i\|_i^2 -\dfrac{t_i^{2p}C_p}{2}\int_{B_i}\int_{\mathbb{R}^2}\dfrac{|\psi_j|^p|\psi_i|^p}{|x|^{\beta}|x-y|^{\mu}|y|^{\beta}}\mathrm{d}y\, \mathrm{d} x\right)\\
		\leq& \dfrac{(p-1)\sum_{i=1}^{k+1}S_{p}(\psi_i)^{\frac{2p}{p-1}}}{2p^{\frac{p}{p-1}}C_p^{\frac{2}{p-1}}}.
	\end{array}\]
	Taking the infimum over all $\psi_i$ and choosing	\[\label{Cp}
	C_{p}>\frac{\left[\dfrac{4\theta(p-1)(k+1) S_{p}^{\frac{2p}{p-1}}}{[4-(\mu+2\beta)](\theta-2)}\right]^{(p-1)/2}}{p^{\frac{p}{2}}},
	\] we have
	\[c_k <\dfrac{(4-(\mu+2\beta))(\theta-2)}{8\theta}. \]
\end{proof}

\begin{lema}\label{Neharibounded2}
	There exists a minimizer ${(u_1 , \cdots, u_{k+1})}$  of $E\mid_{\mathcal{N}_k}$ with nonzero components such that satisfies \eqref{probi}. Moreover, $u_i\in C^1(B_{r_i})$, the outward normal derivatives $\frac{\partial u_i}{\partial \nu} \neq 0$  on $ \partial B_i$ and $(-1)^{i+1}u_i>0$ in $B_i$ for all $i=1, \cdots,k+1$.
\end{lema}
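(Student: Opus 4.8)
The plan is to run the scheme of Lemma \ref{Neharibounded} almost verbatim, with one essential modification: the ad hoc choice of a small $\alpha$ in \eqref{growth}--\eqref{growth2}, which is unavailable in the critical regime, is replaced by the uniform norm control furnished by Proposition \ref{levelll}. First I would take a minimizing sequence $(u_1^n,\cdots,u_{k+1}^n)$ for $E\mid_{\mathcal{N}_k}$. Exactly as in the subcritical case, the Ambrosetti--Rabinowitz part of $(f_3)$ together with \eqref{bounded} yields
\begin{equation*}
	E(u_1^n,\cdots,u_{k+1}^n)-\frac{1}{\theta}\sum_{i=1}^{k+1}\partial_{u_i}E(u_1^n,\cdots,u_{k+1}^n)u_i^n\geq\left(\frac{\theta-2}{2\theta}\right)\sum_{i=1}^{k+1}\|u_i^n\|_i^2 ,
\end{equation*}
so the sequence is bounded, and since $E(u_1^n,\cdots,u_{k+1}^n)\to c_k$ along the Nehari constraint, Proposition \ref{levelll} gives the sharp bound $\sum_{i=1}^{k+1}\|u_i^n\|_i^2<\frac{4-(\mu+2\beta)}{4}$ for all large $n$. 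This is precisely the threshold below which, after the H\"older splitting of \eqref{rj2}, the coefficient $\frac{4\alpha t}{4-(\mu+2\beta)}\|u_i^n\|_i^2$ can be kept strictly below $4\pi$ by taking $\alpha$ close to $4\pi$ and $t>1$ close to $1$, so that \eqref{TM} remains applicable.

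With this bound secured I would pass to a weak limit $(u_1^0,\cdots,u_{k+1}^0)$ and reproduce the lower-bound argument of Lemma \ref{Neharibounded}: estimating $\|f(u_i^n)u_i^n\|$ and $\|F(u_j^n)\|$ in the relevant Lebesgue norms via \eqref{growth}, the admissible Trudinger--Moser bound just obtained, and the Sobolev embedding, one derives the analogue of \eqref{normalimitada}. Because $\tfrac{3(4-(\mu+2\beta))}{2}$ and $q$ both exceed $2$, this forces $\|u_i^0\|_i\geq\sigma_i>0$ for each $i$, so no component of the weak limit vanishes. Strong convergence then follows as before: if $\|u_i^0\|_i<\liminf_n\|u_i^n\|_i$ for some $i$, the uniqueness of the Nehari rescaling in Lemma \ref{multipleneharivetor}, combined with the convergence of the Stein--Weiss bilinear terms in Lemma \ref{lemmaconvef}, would produce an element of $\mathcal{N}_k$ with energy strictly below $c_k$, a contradiction.

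Hence $(u_1^0,\cdots,u_{k+1}^0)\in\mathcal{N}_k$ is a minimizer, and replacing it by $(|u_1^0|,-|u_2^0|,\cdots,(-1)^{k+1}|u_{k+1}^0|)$ produces a sign-alternating minimizer. The critical-growth counterpart of Lemma \ref{manifold} (valid under $(CG)$ in place of $(SG)$, as indicated at the start of this section) then upgrades it to a genuine critical point of $E$ with no zero component, hence a solution of \eqref{probi}; elliptic regularity gives $u_i\in C^1(B_{r_i})$, and Hopf's lemma with the strong maximum principle yield $\partial u_i/\partial\nu\neq0$ on $\partial B_i$ and $(-1)^{i+1}u_i>0$ in $B_i$.

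The hard part will be the non-vanishing step. In the subcritical case one simply shrinks $\alpha$; here the \emph{only} mechanism licensing \eqref{TM} is the quantitative level bound of Proposition \ref{levelll}, and I would need to verify carefully that the constant $\frac{4-(\mu+2\beta)}{4}$ it provides is genuinely compatible with the H\"older exponent $t$ and the critical choice $\alpha\to 4\pi^{-}$ \emph{simultaneously}, keeping both the exponential integral finite and the resulting Sobolev norms controlled uniformly along the whole minimizing sequence.
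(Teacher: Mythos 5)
Your proposal follows essentially the same route as the paper's proof: Proposition \ref{levelll} together with \eqref{bounded} yields $\sum_{i=1}^{k+1}\|u_i^n\|_i^2\leq \frac{4-(\mu+2\beta)}{4}(1-\delta)$ for large $n$, one then picks the exponential exponent and the H\"older parameter $t>1$ so that \eqref{TM} applies, and the rest repeats Lemma \ref{Neharibounded} verbatim (weak limit, nonvanishing of components, strong convergence via Lemmas \ref{multipleneharivetor} and \ref{lemmaconvef}, sign adjustment, Hopf's lemma). The only correction: under $(CG)$ the growth estimate \eqref{growth} requires the exponent to lie \emph{above} $4\pi$ (the paper writes $e^{\alpha 4\pi s^2}$ with $\alpha>1$), not $\alpha\to 4\pi^{-}$ as in your closing paragraph; the compatibility you worry about there is exactly what the strict inequality of Proposition \ref{levelll} provides, since the resulting gap $\delta>0$ leaves room to choose $\alpha t<\frac{1}{1-\delta}$ with both $\alpha$ and $t$ greater than $1$.
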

\begin{proof} Similarly to proof of Lemma \ref{Neharibounded}, we conlude that any minimizing sequence ${(u^n_1 ,\cdots , u^n_{k+1})}$ of $E\mid_{ \mathcal{N}_k}$
	is bounded in $\mathcal{H}_k$. Consequently, any minimizing sequence
	${(u^n_1 ,\cdots, u^n_{k+1})}$ weakly converges to an element $(u^0_1, \cdots, u^0_{k+1})$ in $\mathcal{H}_k$. To prove that $u^0_i \neq 0$ for each $i = 1, \cdots, k + 1$, notice that
	$(u^n_1,\cdots , u^n_{k+1}) \in \mathcal{N}_k$,  we have
	$$\begin{array}{rl}\|u_i^n\|_i^2 \leq \displaystyle
		C\int_{\mathbb{R}^2}\int_{B_i}\dfrac{F(u_j^n(y))f(u_i^n(x))u_i^n(x)}{|x|^{\beta}|x-y|^{\mu}|y|^{\beta}}\, \mathrm{d} x\mathrm{d} y,  \end{array}$$
	and, by Lemma \ref{WHLS}, we obtain
	\begin{equation*} \int_{B_j}\int_{B_{i}}\dfrac{F(u_j^n(y))f(u_i^n(x))u^n_i(x)}{|x|^{\beta}|x-y|^{\mu}|y|^{\beta}}\, \mathrm{d} x\mathrm{d} y \leq 	\left\|F(u_j^n) \right\|_{\frac{4}{4-(\mu+2\beta)}}	\left\|f(u_{i}^n)u_{i}^n \right\|_{\frac{4}{4-(\mu+2\beta)}} \end{equation*}	
	By  $(CG)$ and $(f_2)$, for any $\varepsilon>0$, $q>1$ and $\alpha>1$, there exists $C=C(\varepsilon,q, \alpha)>0$ such that 
	$$ |f(s)|\leq \varepsilon |s|^{\frac{3(4-(\mu+2\beta))}{2}-1}+ C|s|^{q-1}[e^{\alpha4\pi s^2 }-1]\quad \forall s\in \mathbb{R}.$$
	Then, we have
	\begin{equation*}\begin{array}{l}
		\left\|f(u_i^n)u_i^n \right\|_{\frac{4}{4-(\mu+2\beta)}}
		\leq\displaystyle \varepsilon\|u_i^n\|_{2}^{\frac{3(4-(\mu+2\beta))}{2}}\\\displaystyle +C\|u_i^n\|_{\frac{4qt'}{4-(\mu+2\beta)}}^{q}\left\{\int_{B_i}\left[\left(e^{\frac{4\alpha t}{4-(\mu+2\beta)}\|u^n_i\|_i^24\pi \frac{u_i^n}{\|u_i^n\|_i^2}}-1\right)\right]\,\mathrm{d}x\right\}^{\frac{4-(\mu+ 2\beta)}{4t}},\end{array}
	\end{equation*}
	with $t,t'>1$ such that $\frac{1}{t}+\frac{1}{t'}=1$. Notice that, by Proposition \ref{levelll} and \eqref{bounded}, we obtain
	$$\lim_{n\rightarrow 0}\sum_{i=1}^{k+1}\|u_{i}^n\|_i^2= c_k< \frac{4-(\mu+2\beta)}{4}. $$ 
	So, there exist $\delta>0$ small enough and $n_0>0$ such that
	\begin{equation}
		\label{normbounded}
		\sum_{i=1}^{k+1}	\|u_{i}^n\|_i^2\leq \frac{4-(\mu+2\beta)}{4}(1-\delta) \quad \forall n>n_0.
	\end{equation} Thus, choosing $\alpha,t>1$ such that $1< \alpha t < \dfrac{1}{1-\delta}$, from \eqref{normbounded}, we get
	\[\dfrac{4\alpha t}{4-(\mu+\beta)}\|u_i^n\|_i^2<1 \quad \forall i=1,\cdots, k+1 \quad \mbox{and} \quad \forall n>n_0.\] Thus, by \eqref{TM}, we have 
	\[\int_{B_i}\left[\left(e^{\frac{4\alpha t}{4-(\mu+2\beta)}\|u^n_i\|_i^24\pi \frac{u_i^n}{\|u_i^n\|_i^2}}-1\right)\right]\,\mathrm{d}x< C.\]
	Now, in order to conclude the result, we follow exactly the same steps used in the proof of Lemma \ref{Neharibounded}.
	
\end{proof}
\subsection{Proof of main Theorem \ref{teo2} }
At first, notice that Lemma \ref{tecnic} is also avaliable when we assume $(CG)$.

\begin{prop}\label{prooftheo2}
	Assume $(CG)$ and $(f_1)-(f_3)$ and $(f_5)$. For each $k\in \mathbb{N}_+$, there exists a radial solution with $k$ nodes $u_k\in \mathcal{N}_k$ of \eqref{problem} such that $J(u_k)= c_k$.\end{prop}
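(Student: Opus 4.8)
The plan is to mirror the proof of Proposition \ref{prooftheo1} step by step, substituting the critical-growth ingredients where needed. First I would invoke Lemma \ref{Neharibounded2} together with Lemma \ref{tecnic} (which, as already observed, remains valid under $(CG)$) to produce some $\bar{r}_k\in\Gamma_k$ and a minimizer $(w_1^{\bar{r}_k},\cdots,w_{k+1}^{\bar{r}_k})\in\mathcal{N}^{\bar{r}_k}_k$ satisfying $(-1)^{i+1}w_i^{\bar{r}_k}>0$ in $B_i^{\bar{r}_k}$ and $E(w_1^{\bar{r}_k},\cdots,w_{k+1}^{\bar{r}_k})=d(\bar{r}_k)=\inf_{r_k\in\Gamma_k}d(r_k)$. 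By \eqref{jd2} this gives $c_k=d(\bar{r}_k)=J(\sum_{i=1}^{k+1}w_i^{\bar{r}_k})$, so it remains only to show that $u_k:=\sum_{i=1}^{k+1}w_i^{\bar{r}_k}$ is a critical point of $J$ on $H_V$; the principle of symmetric criticality then yields a weak solution of \eqref{problem} that changes sign exactly $k$ times.

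Next I would argue by contradiction exactly as in the subcritical case: assuming $u_k$ is not critical, pick a radial $\phi\in C_0^\infty(\mathbb{R}^2)$ with $J'(u_k)\phi=-2$ and form the gluing map $g(\tilde{t},\epsilon)=\sum_{i=1}^{k+1}t_i^{1/q}w_i^{\bar{r}_k}+\epsilon\phi$. Continuity of $g$ preserves the $k$ sign changes on a small neighborhood $B_\tau(y_0)\times[0,\tau]$ with $y_0=(1,\cdots,1)$, and after shrinking $\tau$ one gets $\langle J'(g(\tilde{t},\epsilon)),\phi\rangle<-1$ throughout. I would then introduce the cut-off modification $\bar{g}(\tilde{t})=\sum_{i=1}^{k+1}t_i^{1/q}w_i^{\bar{r}_k}+\tau\eta(\tilde{t})\phi$ and the vector field $H(\tilde{t})$ whose $j$-th component records $\langle J'(\bar{g}(\tilde{t})),\bar{g}(\tilde{t})\mid_{\Omega_j}\rangle$ on the nodal annuli $\Omega_j$.

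The core is the same Brouwer-degree computation. Using Lemma \ref{multipleneharivetor}, or rather its critical analogue already employed in Proposition \ref{levelll}, the map $\hat{G}(\tilde{t})=E(t_1^{1/q}w_1^{\bar{r}_k},\cdots,t_{k+1}^{1/q}w_{k+1}^{\bar{r}_k})$ is strictly concave with unique global maximum at $y_0$, so the homotopy $h_s(\tilde{t})=sH(\tilde{t})+(1-s)(y_0-\tilde{t})$ satisfies $h_s(\tilde{t})\cdot(1-\tilde{t})>0$ on $\partial B_{\tau/2}(y_0)$ and hence $\mathrm{deg}(H,B_{\tau/2}(y_0),0)=(-1)^{k+1}\neq0$. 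This forces a zero $\tilde{t}_0$, so $\bar{g}(\tilde{t}_0)\in\mathcal{N}_k$ and $J(\bar{g}(\tilde{t}_0))\geq c_k$ by \eqref{ck}. On the other hand, integrating $J'$ along the cut-off direction and using that $y_0$ is the strict maximum gives $J(\bar{g}(\tilde{t}_0))<c_k$ (or $\leq c_k$ with a strict deficit from the $-\tau\eta(\tilde{t}_0)$ term when $\eta>0$), a contradiction. Hence $u_k$ is a radial $k$-nodal solution with $J(u_k)=c_k$.

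The main obstacle, and the very reason this proposition is stated separately from Proposition \ref{prooftheo1}, is ruling out vanishing components of the minimizer in the critical regime, where one can no longer take $\alpha>0$ arbitrarily small in \eqref{growth}. This is precisely what Lemma \ref{Neharibounded2} settles, and it hinges on the sharp level bound $c_k<(4-(\mu+2\beta))(\theta-2)/(8\theta)$ from Proposition \ref{levelll}: combined with \eqref{bounded} it keeps $\sum_{i=1}^{k+1}\|u_i^n\|_i^2$ strictly below the Trudinger--Moser threshold as in \eqref{normbounded}, allowing \eqref{TM} to be applied so that the exponential integrals stay uniformly bounded and each $\|u_i^0\|_i$ is bounded away from zero. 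Once this no-vanishing-component property is in hand, every remaining ingredient, the strict concavity and uniqueness from the analogue of Lemma \ref{multipleneharivetor}, the degree argument, and the final contradiction, transfers verbatim from the subcritical proof, since none of those steps used subcriticality beyond what $(CG)$, $(f_5)$ and Proposition \ref{levelll} now supply.
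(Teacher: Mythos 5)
Your proposal is correct and follows essentially the same route as the paper: the paper's own proof of Proposition \ref{prooftheo2} simply states that it follows the steps of Proposition \ref{prooftheo1}, which is exactly the transfer you carry out, with the correct identification of Lemma \ref{Neharibounded2} (resting on the level bound of Proposition \ref{levelll}) and the $(CG)$-validity of Lemma \ref{tecnic} as the only ingredients that need replacing.
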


\begin{proof} The proof follows the same steps in a proof of Proposition \ref{prooftheo1}.\end{proof}

\textbf{Acknowledgements}

The first author was supported  by  FAPESP/Brazil Proc. 2023/18443-8. The second author was  supported by FAPESP/Bazil Proc. 2023/05445-2. The third author was partially supported  by CNPq/Brazil Proc 303256/2022-1 and FAPESP/Brazil Proc 2022/16407-1. The forth author was supported by State University of Santa Cruz/Brazil (PROPP 073.6766.2019.0020740-89).

This paper was written while Eudes M. Barboza was on
Posdoctoral stage in the Department of Mathematics of the Federal University of
São Carlos, whose hospitality he gratefully acknowledges.

\vspace{1cm}
\noindent\textsc{EUDES M. BARBOZA}\\
Departamento de Matem\'atica,\\ 
Universidade Federal Rural de Pernambuco - UFRPE\\
50740-560, Recife, Pernambuco, Brazil\\
\noindent\texttt{eudes.barboza@ufrpe.br}.\\

\noindent\textsc{EDUARDO DE S. BÖER}\\
Instituto de Ci\^encias Matem\'aticas e de Computa\c c\~ao\\
Universidade de S\~ao Paulo -- USP\\
13566-590, Centro, S\~ao Carlos - SP, Brazil\\
\noindent\texttt{eduardoboer@usp.br}. \\

\noindent\textsc{OLÍMPIO H. MIYAGAKI}\\
Departamento de Matemática\\
Universidade Federal de S\~ao Carlos - UFSCar\\
13565-905,
São Carlos, São Paulo,
Brazil\\
\noindent\texttt{olimpio@ufscar.br}. \\

\noindent\textsc{CLAUDIA R. SANTANA}\\
Departamento de Ci\^encias Exatas e Tecnol\'ogicas\\
Universidade Estadual de Santa Cruz - UESC\\
45662-900,		
		Ilh\'eus, Bahia, Brazil\\
\noindent\texttt{santana@uesc.br}. 
\end{document}